\newlist{steps}{enumerate}{1}
\setlist[steps, 1]{label = Step \arabic*:}
\newtheorem{theorem}{\bf Theorem}[section]
\newtheorem{lemma}[theorem]{\bf Lemma}
\newtheorem{definition}[theorem]{\bf Definition}
\newtheorem{corollary}[theorem]{\bf Corollary}
\newtheorem{proposition}[theorem]{\bf Proposition}
\newtheorem{remark}[theorem]{\bf Remark}
\newcommand{\rme}{\mathrm{e}}
\newcommand{\rmi}{\mathrm{i}}
\newcommand{\defeq}{\mathrel{\mathop:}=}
\begin{document}

\title{Knotted surfaces as vanishing sets of polynomials}
%%% \subtitle{Sub-Title of Your Article}%%% optional

\author{Benjamin \textsc{Bode}}% Author Name (\sc should NOT be used here)
\address{Department of Mathematics, Graduate School of Science, Osaka University, Toyonaka, Osaka 560-0043, Japan}
\email{ben.bode.2013@my.bristol.ac.uk}

\author{Seiichi \textsc{Kamada}}% Author Name (\sc should NOT be used here)
\address{Department of Mathematics, Graduate School of Science, Osaka University, Toyonaka, Osaka 560-0043, Japan}
\email{kamada@math.sci.osaka-u.ac.jp}

%\subjclass[2020]{Primary 57K45; Secondary 57K10}% Subject code(s)

%\keywords{Loop braid, surface braid, polynomial, knots in vector fields}% Key word(s)

%\recdate{2013}{11}{22}% Date of reception
%\revdate{2014}{1}{23}%  Date of revision

\begin{abstract}
We present an algorithm that takes as input any element $B$ of the loop braid group and constructs a polynomial $f:\mathbb{R}^5\to\mathbb{R}^2$ such that the intersection of the vanishing set of $f$ and the unit 4-sphere contains the closure of $B$. The polynomials can be used to create real analytic time-dependent vector fields with zero divergence and closed flow lines that move as prescribed by $B$. We also show how a family of surface braids in $\mathbb{C}\times S^1\times S^1$ without branch points can be constructed as the vanishing set of a holomorphic polynomial $f:\mathbb{C}^3\to\mathbb{C}$ on $\mathbb{C}\times S^1\times S^1\subset\mathbb{C}^3$. Both constructions allow us to give upper bounds on the degree of the polynomials.
\end{abstract}

\maketitle

\section{Introduction}\label{sec:intro}
%%%% Insert A head here
For a (polynomial) function $f:\mathbb{R}^m\to\mathbb{R}^n$, where $m$ and $n$ are positive integers, we call the set of points where $f$ vanishes, i.e.,
\begin{equation}
V_f\defeq f^{-1}(0,0,\ldots,0)=\{(x_1,x_2,\ldots,x_m)\in\mathbb{R}^m:f(x_1,x_2,\ldots,x_m)=(0,0,\ldots,0)\},
\end{equation}
the \textit{vanishing set} of $f$. If $f$ appears in the context of a physical system, for example as a quantum wavefunction, we may also refer to $V_f$ as the \textit{nodal set} of $f$. We often write $f^{-1}(0)$ for the vanishing set instead of $f^{-1}(0,0,\ldots,0)$ or $V_f$.

The best-known connection between knots and vanishing sets of polynomials is Milnor's study of algebraic links, i.e., links of isolated singularities of complex plane curves \cite{milnor}. However, singularities are not necessary to establish interesting relations between properties of the polynomials and topological properties of the corresponding links. The set of transverse $\mathbb{C}$-links, introduced by Rudolph as the links that arise as transverse intersections of complex plane curves and the unit 3-sphere \cite{rudolph}, for example have been shown to be exactly the set of quasipositive braid closures \cite{boileau, rudolph83, rudolph84, rudolph:quasi}.

More recently, polynomials with knotted vanishing sets have also caught the eye of theoretical physicists who are interested in the explicit construction of knotted configurations in physical systems. Such a configuration is usually described by a function from 3-dimensional space to some target space. One example of this would be a quantum wavefunction that takes values in $\mathbb{C}$. Berry found that there are eigenstates $\Psi:\mathbb{R}^3\to\mathbb{C}$ of the hydrogen atom, for which the nodal set of $\Psi$ is knotted \cite{berry}. Berry's problem on knotted nodal sets of quantum systems was resolved for the harmonic oscillator \cite{daniel:berry} and the hydrogen atom \cite{daniel:coulomb}. Examples from other areas in theoretical physics abound \cite{mark, km:2016topology, kedia1, kauffman, ma:2014knotted, ma:2016global, sutcliffe}.

Polynomials whose vanishing sets are in some sense knotted, for example in the form of $f:\mathbb{R}^4\to\mathbb{R}^2$ with $f^{-1}(0)\cap S^3$ being knotted, can potentially be used in many areas of theoretical physics to construct such knotted configurations. 
In \cite{bode:2016polynomial} Dennis and the first author developed an algorithm that constructs such a polynomial for any given link. Furthermore, these polynomials, written as maps $\mathbb{C}\times\mathbb{R}^2\to\mathbb{C}$, can be taken to be holomorphic in the complex variable and of relatively low degree.

Usually, a physical system is not described by a polynomial. Quantum wavefunctions for example have to be normalisable and be the eigenfunction of some Hermitian operator. Hence we need some procedure to turn the constructed polynomials into physically meaningful functions. In some areas this is more established than in others.

So far, we have considered a configuration of a physical system as a function from 3-dimensional space. This means that it describes the state of the system at a fixed time $t$. The algorithm in \cite{bode:2016polynomial} allows us to construct such functions for any link and for several physical systems \cite{bode:2016lemniscate}. However, the configurations that are constructed in this way cannot be expected to be stable, i.e., by construction the field $\Psi_t$ contains the desired link at the time $t$, say $\Psi_t^{-1}(0)=L$ for example, but as the field evolves according to some differential equation, the link is moved around and potentially changed or removed altogether. Hence at a later time $t'$ we have absolutely no guarantee that the field $\Psi_{t'}$ still contains the desired link. We cannot expect $\Psi_{t'}^{-1}(0)$ to be ambient isotopic to $L$.

In several areas of theoretical physics there are now rigorous results regarding the existence and construction of knotted solutions. Apart from the solutions to Berry's problem these have been proven to exist in monochromatic waves (satisfying the Helmholtz equation) \cite{daniel:submani} and in the context of magnetic fields induced by knotted wires \cite{ulam} and of knotted stream and vortex lines in fluid mechanics \cite{daniel:navier, daniel:steady, daniel:vortexEuler}. These results are to a large part constructive. In particular, explicit solutions can be found numerically with the aid of a computer.

The goal of this article is to extend the construction from \cite{bode:2016polynomial} to higher dimensions, i.e., knotted surfaces in 4 dimensions. Since the original construction is based on braids, we consider different generalisations of the concept of a braid: the loop braid group and surface braids in $\mathbb{C}\times S^1\times S^1$.
\begin{theorem}
\label{thm:intro1}
There is an algorithm that constructs for every given element $B$ of the loop braid group a polynomial $f:\mathbb{R}^5\to\mathbb{R}^2$ such that the vanishing set $f^{-1}(0)\cap S^4$ on the unit 4-sphere contains a set that is ambient isotopic to the closure of $B$. The algorithm provides an upper bound on the degree of $f$ in terms of the number of strands and the number of crossings of $B$.
\end{theorem}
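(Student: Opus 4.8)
The plan is to adapt the braid-parametrization method of \cite{bode:2016polynomial} from ordinary braids, which are families of points over a single circle $S^1$, to loop braids, which I will encode as surface braids over the torus $T^2=S^1\times S^1$ with $\mathbb{C}$-fibers and no branch points. The first task is to reinterpret the closure of a loop braid $B$ on $n$ loops as a continuous family of $n$-point configurations $\{\beta_1(t,s),\ldots,\beta_n(t,s)\}\subset\mathbb{C}$ parametrized by $(t,s)\in T^2$, where $t$ runs along the loop-braid (time) direction and $s$ parametrizes each individual loop as a circle. For the identity loop braid each $\beta_j$ is constant in both variables, so the $n$ sheets $(t,s)\mapsto\beta_j(t,s)$ trace $n$ disjoint tori, the unlink of tori that is the closure of the trivial element. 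A permutation generator $\sigma_i$ is realized, as in the classical case, by an ordinary braid crossing of $\beta_i$ and $\beta_{i+1}$ in the $t$-direction that is uniform in $s$, whereas the loop-passing generator is realized by a crossing localized near a single value of the loop parameter $s$, so that over the full range of $s$ one sheet threads once through its neighbour. This is where the combinatorial loop-braid word must be converted into explicit, globally single-valued root functions that close up over the whole torus, and I expect it to be the main obstacle.

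Given such a parametrization, I form the braid function $g(u,t,s)=\prod_{j=1}^{n}\bigl(u-\beta_j(t,s)\bigr)$ with $u\in\mathbb{C}$, whose zero set in $\mathbb{C}\times T^2$ is exactly the surface braid and hence, by the first step, a copy of the closure of $B$. Expanding the product, the coefficients of $g$ as a polynomial in $u$ are the elementary symmetric functions of the $\beta_j$ and are therefore single-valued functions on $T^2$. I then replace each coefficient by a finite Fourier partial sum in $\rme^{\rmi t}$ and $\rme^{\rmi s}$, that is, by a trigonometric polynomial, chosen fine enough that the zero set is unchanged up to ambient isotopy. This turns $g$ into a genuine polynomial in $u$, $\cos t$, $\sin t$, $\cos s$ and $\sin s$; the degree in $u$ is the strand number $n$, while the number of Fourier modes required is governed by the number of crossings, which is how the eventual degree bound in terms of strands and crossings will arise.

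Finally, I transport the torus model into the $4$-sphere. I embed $\mathbb{C}\times T^2$ into $\mathbb{R}^5$ by sending the two angles $t,s$ to the standard circles in two coordinate planes while keeping $u\in\mathbb{C}$, and I perform the same kind of substitution and rescaling as in \cite{bode:2016polynomial}, replacing $\rme^{\pm\rmi t}$ and $\rme^{\pm\rmi s}$ by their expressions in the ambient real coordinates and clearing denominators, so that $g$ extends to a polynomial map $f:\mathbb{R}^5\to\mathbb{R}^2$. The set $f^{-1}(0)\cap S^4$ then contains the image of the surface braid; any extra components produced by the extension lie away from this image and should be split from it, which is exactly why the statement claims only that $f^{-1}(0)\cap S^4$ \emph{contains} a set ambient isotopic to the closure of $B$. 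Tracking the degree through the substitution and the clearing of denominators yields the asserted upper bound on $\deg f$.

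The two places I anticipate genuine difficulty are therefore (i) the first step, making the loop-passing generator into an honest family of roots over $T^2$ that is globally well-defined and realizes the correct surface-isotopy class, and (ii) the last step, certifying that the passage to $S^4$ neither creates spurious intersections with the desired surface nor alters its isotopy type, and that the leftover vanishing components are genuinely separate from it.
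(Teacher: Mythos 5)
Your proposal takes a genuinely different route from the paper, and unfortunately the route has gaps that are not just technical. The paper never fibers the individual rings over a second circle factor. It keeps each ring as an honest round circle in $\mathbb{R}^3$, recorded by its centre $(x_{C,j}(t),y_{C,j}(t),z_{C,j}(t))$ and radius $\rho_{C,j}(t)$ as functions of the single time variable $t$, and cuts the ring out as the zero set of $(x-x_{C,j}(t))^2+(y-y_{C,j}(t))^2-\rho_{C,j}(t)^2+\rmi(z-z_{C,j}(t))$. Only $t$ is Fourier-parametrised and replaced by a complex variable $v$, so the ambient space is $\mathbb{R}^3\times\mathbb{C}=\mathbb{R}^5$ and the intersection with $S^4$ arises exactly as in the classical case. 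Your model $\mathbb{C}\times S^1\times S^1$ does not fit this frame: with both angles sent to unit circles in coordinate planes and $u\in\mathbb{C}$ kept, you land in $\mathbb{R}^6$, not $\mathbb{R}^5$, and $\mathbb{C}\times T^2$ is not the trace of anything on $S^4$; the statement you are proving explicitly concerns $f^{-1}(0)\cap S^4$.

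The deeper gap is the one you flag yourself and do not resolve: realizing the loop-passing generator $\sigma_i$ as a branch-point-free family of roots over $T^2$. Without branch points the projection of your surface to $T^2$ is a covering, so the permutation of sheets induced by traversing the $t$-circle is a single monodromy element, independent of $s$; a transposition of $\beta_i$ and $\beta_{i+1}$ that occurs only for $s$ near one value and not elsewhere is incompatible with globally single-valued sheets. Only full twists (inducing the trivial permutation) can be localized in $s$, and it is then not at all clear that the resulting torus pair is ambient isotopic to the closure of $\sigma_i$, in which one ring genuinely shrinks, leaves its longitudinal position, and passes through the spanning disk of its neighbour. This is precisely why the paper treats surface braids over $T^2$ as a separate problem (Section 4), obtains only a non-constructive existence statement there (via Stone--Weierstrass, which, as in your Fourier-truncation step, yields no algorithm and no degree bound in terms of strands and crossings), and restricts the constructive torus result to spinning braids. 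For Theorem \ref{thm:intro1} itself the paper instead reads the data off the signed singular braid formed by the centre lines of the rings: $F_C$ and $G_C$ are obtained exactly as in the classical case, the $z$-coordinates $H_C$ require a Hermite-type trigonometric interpolation prescribing values and derivatives at the $\sigma$-crossings so that the correct ring threads the other, and the radii $R_C$ are interpolated so that the threading ring is the smaller one; counting these data points is what produces the degree bound.
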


By composing the constructed polynomial with an inverse stereographic projection from $\mathbb{R}^4$ to $S^4$ we obtain a polynomial from $\mathbb{R}^4$ to $\mathbb{R}^2$ whose vanishing set contains the closure of $B$. The existence of such polynomials is well known. In fact, the Nash-Tognoli Thereom (cf. Theorem 14.1.4 in \cite{bochnak:rag}) guarantees that there is always a polynomial from $\mathbb{R}^4$ to $\mathbb{R}^2$ whose vanishing set is ambient isotopic to (as opposed to `contains') the closure of $B$, or any given finite collection of embedded tori in $\mathbb{R}^4$ for that matter. Additionally, it follows from \cite{danielhector} that there is an analytic submersion with this property. We would like to point out that in general zero is not necessarily a regular value of the polynomials in Theorem \ref{thm:intro1}. However, the points on $B$ can be taken to be regular points. That is, if the vanishing set contains critical points, then they lie on the extra components. At this moment we are not aware of a constructive method that would remove the extra components or guarantee that zero becomes a regular value.

\begin{theorem}
\label{thm:intro2}
There is an algorithm that constructs for any spinning braid $B$ (cf. Definition \ref{def:spinning}) in $\mathbb{C}\times S^1\times S^1$ a holomorphic polynomial $f:\mathbb{C}^3\to\mathbb{C}$ such that the vanishing set $f^{-1}(0)\cap (\mathbb{C}\times S^1\times S^1)$ is ambient isotopic to $B$. The algorithm provides an upper bound on the degree of $f$.
\end{theorem}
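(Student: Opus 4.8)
The plan is to generalise the braided polynomial construction of \cite{bode:2016polynomial} from a single parameter circle to the torus $S^1\times S^1$. A spinning braid is a surface braid in $\mathbb{C}\times S^1\times S^1$ without branch points, so over each point of the base torus $T^2=S^1\times S^1$ it is a configuration of $n$ distinct points in $\mathbb{C}$; equivalently it is the image of a real-analytic map $T^2\to C_n(\mathbb{C})$ into the configuration space of $n$ unordered points in the plane. Writing $(\theta_1,\theta_2)$ for the angular coordinates on $T^2$ and $z_1(\theta_1,\theta_2),\ldots,z_n(\theta_1,\theta_2)$ for the (locally defined) sheets, I would first encode $B$ as the vanishing set of the \emph{braid polynomial}
\begin{equation}
g(u,\theta_1,\theta_2)\defeq\prod_{j=1}^n\bigl(u-z_j(\theta_1,\theta_2)\bigr)=\sum_{k=0}^n a_k(\theta_1,\theta_2)\,u^{n-k},
\end{equation}
whose coefficients $a_k$ are the elementary symmetric functions of the $z_j$ and are therefore single-valued real-analytic functions on $T^2$, despite the monodromy of the individual sheets. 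By construction $g^{-1}(0)\cap(\mathbb{C}\times T^2)$ is exactly $B$, and $g$ is monic in $u$, so $a_0\equiv 1$.

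Next I would turn $g$ into a holomorphic polynomial. Since each $a_k$ is real-analytic on the compact torus, it has a convergent double Fourier expansion in $\rme^{\rmi\theta_1}$ and $\rme^{\rmi\theta_2}$; let $\tilde a_k$ denote its truncation to frequencies of modulus at most $N_1$ in $\theta_1$ and $N_2$ in $\theta_2$, and set $\tilde g\defeq\sum_k \tilde a_k u^{n-k}$. On the torus one may substitute $v=\rme^{\rmi\theta_1}$, $w=\rme^{\rmi\theta_2}$, so that $\rme^{-\rmi\theta_1}=1/v$ and $\rme^{-\rmi\theta_2}=1/w$; multiplying $\tilde g$ by $v^{N_1}w^{N_2}$ then clears all negative powers and produces a genuine holomorphic polynomial $f(u,v,w)$ on $\mathbb{C}^3$. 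Because $v,w\neq 0$ on $S^1\times S^1$, the factor $v^{N_1}w^{N_2}$ never vanishes there, so $f^{-1}(0)\cap(\mathbb{C}\times S^1\times S^1)$ coincides with the vanishing set of $\tilde g$.

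The heart of the argument, and the step I expect to be the main obstacle, is to show that for $N_1,N_2$ large enough the vanishing set of $\tilde g$ is ambient isotopic to $B$. The crucial point is that $B$ has no branch points, which means that the discriminant $\Delta(\theta_1,\theta_2)=\prod_{i<j}(z_i-z_j)^2$ is nowhere zero on $T^2$; by compactness $|\Delta|$ is bounded below by some $\epsilon>0$. Truncating the Fourier series perturbs the coefficients $a_k$ uniformly by an arbitrarily small amount, and the discriminant of a monic polynomial depends polynomially, hence continuously, on its coefficients, so for $N_1,N_2$ sufficiently large the discriminant of $\tilde g$ also stays bounded away from zero. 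Hence $\tilde g$ has $n$ distinct roots over every point of $T^2$ and defines a surface braid without branch points. Considering the straight-line homotopy $a_k\mapsto(1-s)a_k+s\tilde a_k$, $s\in[0,1]$, whose interpolated coefficients remain within the uniform neighbourhood of the $a_k$ and whose discriminant therefore remains nonzero throughout, I obtain a one-parameter family of such surface braids interpolating between $B$ and $\tilde g^{-1}(0)$; since the roots stay distinct and bounded, this is an isotopy of embedded (compact) covering surfaces, and the isotopy extension theorem upgrades it to an ambient isotopy of $\mathbb{C}\times T^2$. Here the no-branch-point hypothesis is essential: at a branch point $\Delta$ vanishes and this openness argument breaks down, which is exactly why the theorem is restricted to spinning braids.

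Finally, the degree bound follows by bookkeeping. After clearing denominators the polynomial $f$ has degree $n$ in $u$ and degrees at most $2N_1$ and $2N_2$ in $v$ and $w$, so its total degree is at most $n+2N_1+2N_2$. It then remains to bound the truncation levels $N_1,N_2$ needed to preserve $|\Delta|>0$ in terms of the combinatorial data of $B$, namely its number of sheets and the complexity of its defining word in the $\theta_1$ and $\theta_2$ directions. This is where I would invoke the explicit real-analytic parametrisation provided by Definition \ref{def:spinning} to make the required frequencies, and hence the degree, effective.
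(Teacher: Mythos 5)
Your strategy is essentially the paper's proof of the \emph{existence} result (Theorem \ref{prop:holosurf1}): encode the branch-point-free surface braid by the monic braid polynomial with single-valued coefficients, approximate those coefficients by trigonometric polynomials, and clear denominators. The gap is that Theorem \ref{thm:intro2} asks for an \emph{algorithm} together with an \emph{explicit upper bound on the degree}, and your argument defers exactly that step: you never produce effective truncation levels $N_1,N_2$. Bounding $N_1,N_2$ so that the truncated coefficients stay within the $\epsilon$-neighbourhood where the discriminant is nonzero requires quantitative control of the decay of the Fourier coefficients of the $a_k$, hence quantitative smoothness bounds on the parametrisation, and both $\epsilon$ and these bounds depend on analytic data of the chosen parametrisation rather than on the combinatorial data (number of strands, length of the braid word) in terms of which the theorem's degree bound is stated. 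This is precisely the obstruction that prevents the paper from upgrading Theorem \ref{prop:holosurf1} to an algorithmic statement for general branch-point-free surface braids, so it cannot be dismissed as bookkeeping.

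The hypothesis that $B$ is a \emph{spinning} braid is what removes the need for any truncation, and your proposal does not exploit it. By Definition \ref{def:spinning} the sheets are $z_j(\varphi,\chi)=\rme^{\rmi n\chi}\bigl(X_j(\varphi)+\rmi Y_j(\varphi)\bigr)$, where $X_j,Y_j$ parametrise the underlying classical braid. Running Steps 1 and 2 of Algorithm 0 (trigonometric interpolation as in \cite{bode:2016polynomial}) replaces $X_j+\rmi Y_j$ by trigonometric polynomials $F_C\bigl(\tfrac{\varphi+2\pi(j-1)}{s_C}\bigr)+\rmi\, G_C\bigl(\tfrac{\varphi+2\pi(j-1)}{s_C}\bigr)$ of explicitly bounded degree, at the cost of an isotopy of the classical braid (and hence of the spinning braid). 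The elementary symmetric functions of these sheets are then \emph{already} trigonometric polynomials in $\varphi$ and $\chi$ of known degree: no Fourier truncation, no discriminant-stability argument, and no appeal to the isotopy extension theorem is needed, since the vanishing set of the resulting $g$ \emph{is} the (isotoped) spinning braid on the nose. Substituting $v=\rme^{\rmi\varphi}$, $1/v=\rme^{-\rmi\varphi}$, $w=\rme^{\rmi\chi}$ and clearing the power of $v$ in the denominator then yields the holomorphic polynomial $f$, and the degree bound of Proposition \ref{prop:bound2} follows from the interpolation bounds of the classical case plus the extra factor $\rme^{\rmi n\chi}$. If you instantiate your ``explicit real-analytic parametrisation'' in this way, your argument collapses onto the paper's proof; as written, it proves existence but not the algorithmic and quantitative content of the theorem.
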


The precise upper bounds on the polynomial degrees are given in Propositions \ref{prop:bound} and \ref{prop:bound2}, respectively.
The motivation for these constrcuctions is threefold. Firstly, we hope to find similar relations between properties of the polynomials and topological features of the surfaces as in the lower-dimensional case, making this a worthwhile endeavour in the intersection of classical real algebraic geometry and the topology of knotted surfaces. Indeed there is, like for 1-dimensional links in $S^3$, an upper bound on the degree of the constructed polynomials in terms of the number of strands and the number of crossings of the braid used for the construction. Secondly, the theorems can be seen as constructive counterparts to the existence results in the vein of Nash and Tognoli. Thirdly, the new algorithm has the potential of providing us with a tool to create knotted field configurations, whose time evolution is in a topological sense controlled. This goes beyond the stability condition alluded to above that demands that the field contains a given link for all time. Since the algorithm from Theorem \ref{thm:intro1} works for any loop braid, we can control how the different components of the link twist around each other as time evolves. We would like to emphasize that at this point the algorithm does not take any differential equations into account, so that at the moment hoping for such applications seems very optimistic. However, if solutions can be constructed from polynomials, there is the possibility that topological and algebro-geometric properties are reflected in physical quantities, such as in \cite{weaving}, where the helicity of an electromagnetic field with knotted field lines is related to the degree of a polynomial, which in turn is related to the number of strands of a braid.

At this stage, we still lack a proper procedure to turn our polynomials into physically meaningful functions. However, we point to some promising observations in this regard. In particular, we find.
\begin{proposition}
\label{prop:vector}
Let $B\subset\mathbb{R}^3\times[0,2\pi]$ be a loop braid. Then we can construct a time-dependent real analytic vector field $V_t:\mathbb{R}^3\to\mathbb{R}^3$, $t\in\mathbb{R}$, such that
\begin{itemize}
\item $\nabla\cdot V_t=0$ for all $t\in\mathbb{R}$,
\item $V_t=V_{t+2\pi}$ for all $t\in\mathbb{R}$,
\item There are closed flow lines of $V_t$ that move as prescribed by $B$ as $t$ varies between $0$ and $2\pi$, i.e. for all $t\in\mathbb{R}$ the field $V_t$ is tangent to $B\cap\left(\mathbb{R}^3\times\{t\text{ mod }2\pi\}\right)$.
\end{itemize}
\end{proposition}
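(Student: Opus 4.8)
The plan is to read off the vector field directly from the complex-valued function produced by the algorithm behind Theorem~\ref{thm:intro1}. Before the final inverse-stereographic step that places the closure of $B$ inside $S^4$, that algorithm constructs an intermediate real analytic function $g\colon\mathbb{R}^3\times S^1\to\mathbb{C}$ (with $S^1=\mathbb{R}/2\pi\mathbb{Z}$ the time circle) which is polynomial in the three spatial coordinates and a finite trigonometric polynomial in the time coordinate $t$, with the property that for every $t$ the spatial zero set $g(\cdot,t)^{-1}(0)\subset\mathbb{R}^3$ contains the cross-section $B\cap(\mathbb{R}^3\times\{t\})$. Writing $g=u+\rmi v$ with $u,v\colon\mathbb{R}^3\times S^1\to\mathbb{R}$, I would define
\begin{equation}
V_t\defeq\nabla u(\cdot,t)\times\nabla v(\cdot,t),
\end{equation}
where $\nabla$ denotes the gradient in the spatial variables only.

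The three required properties should then follow almost formally. Since $g$ is defined on $\mathbb{R}^3\times S^1$, both $u$ and $v$ are $2\pi$-periodic in $t$, hence so is $V_t$, and real analyticity is inherited from $g$. The divergence-free condition is the vector-calculus identity $\nabla\cdot(\nabla u\times\nabla v)=\nabla v\cdot(\nabla\times\nabla u)-\nabla u\cdot(\nabla\times\nabla v)=0$, using that the curl of a gradient vanishes. For tangency, observe that $V_t\cdot\nabla u=V_t\cdot\nabla v=0$ identically, each being a scalar triple product with a repeated factor; hence at every point of $g(\cdot,t)^{-1}(0)$ the field $V_t$ lies in $\ker\mathrm{d}u\cap\ker\mathrm{d}v$, which is the tangent line to the curve $\{u=0,v=0\}$ wherever $\nabla u$ and $\nabla v$ are linearly independent. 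This is precisely tangency to the cross-section circles of $B$.

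The step that requires genuine care, and which I expect to be the main obstacle, is arranging that $V_t$ is not merely tangent but nonvanishing along $B$, so that the circles are honestly closed flow lines rather than curves of stationary points. This is exactly the condition that the points of $B$ be regular points of $g(\cdot,t)$, i.e. that $\nabla u$ and $\nabla v$ be linearly independent there; as noted after Theorem~\ref{thm:intro1}, the algorithm can be arranged so that zero, although possibly not a global regular value, is regular along $B$ itself, giving $V_t\neq0$ on $B$. When $B$ has several loops one takes $g$ to be the product of the factors $g^{(k)}=u^{(k)}+\rmi v^{(k)}$ associated to the individual loops; a short computation shows that on the zero set of the $k$-th factor one has $V_t=|c|^2\,(\nabla u^{(k)}\times\nabla v^{(k)})$, where $c\in\mathbb{C}$ is the product of the remaining factors evaluated at the point. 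Since the loops of a loop braid are pairwise disjoint at each fixed time, $c\neq0$ along the $k$-th loop, so $V_t$ stays a positive multiple of the tangent field of that loop and in particular is nonzero. Finally, a compact connected curve on which a continuous vector field is everywhere tangent and nonvanishing is a periodic orbit, so each cross-section circle is a closed flow line of $V_t$, which would complete the verification of all three bullet points.
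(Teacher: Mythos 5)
Your proposal is correct and follows essentially the same route as the paper: your field $\nabla\mathrm{Re}(g)\times\nabla\mathrm{Im}(g)$ is precisely the alternative definition recorded at the end of Section \ref{sec:physics}, and it is pointwise parallel to the Ra\~nada-style field $\tfrac{1}{2\pi\rmi}\,\nabla g\times\nabla\overline{g}/(1+g\overline{g})$ used in the paper's proof, so the divergence, periodicity, and tangency arguments coincide. Your extra check that $V_t$ is nonvanishing along $B$ (via the factorisation $V_t=|c|^2\,\nabla u^{(k)}\times\nabla v^{(k)}$ and disjointness of the rings) is a welcome refinement that the paper's proof leaves implicit.
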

This result can be seen as a first small step in the construction of time-dependent magnetic fields with closed flow lines, whose time evolution is determined by a loop braid $B$.

While Theorems \ref{thm:intro1} and \ref{thm:intro2} refer to an explicit constructive procedure, we also assert the existence of certain functions without a procedure to find them yet.
\begin{theorem}
\label{prop:holosurf1}
Let $B$ be a surface braid in $\mathbb{C}\times S^1\times S^1$ of degree $m$ and without any branch points. Then there exists a holomorphic polynomial $f:\mathbb{C}^3\to\mathbb{C}$ such that $f^{-1}(0)\cap(\mathbb{C}\times S^1\times S^1)$ is equivalent to $B$ and $\deg_u f=m$, where $\deg_u f$ is the polynomial degree of $f$ with respect to the first complex variable $u$.
\end{theorem}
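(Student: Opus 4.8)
The plan is to realise $B$ as the family of roots of a polynomial that is monic (up to a nonvanishing monomial) in $u$ of $u$-degree exactly $m$, and to obtain its coefficients by trigonometric approximation on the torus. Since $B$ has no branch points, the projection $B\to S^1\times S^1$ onto the last two coordinates is an unbranched $m$-fold covering, so over each point $(v,w)$ of the torus $T=\{|v|=|w|=1\}\subset\mathbb{C}^2$ the surface meets the fibre $\mathbb{C}\times\{(v,w)\}$ in exactly $m$ distinct points $u_1(v,w),\dots,u_m(v,w)\in\mathbb{C}$. First I would form the fibrewise monic polynomial
\begin{equation}
g(u,v,w)=\prod_{j=1}^m\bigl(u-u_j(v,w)\bigr)=u^m+\sum_{k=0}^{m-1}c_k(v,w)\,u^k,\qquad (v,w)\in T .
\end{equation}
Even though the individual sheets $u_j$ may be permuted by the monodromy of the covering, the coefficients $c_k$ are the elementary symmetric functions of the roots and are therefore single-valued and continuous (indeed smooth) on $T$; moreover $g^{-1}(0)\cap(\mathbb{C}\times T)$ is exactly $B$.

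The obstruction to simply taking $f=g$ is that $f$ must be holomorphic on $\mathbb{C}^3$, whereas the $c_k$ are merely functions on the torus. To remedy this I would approximate each $c_k$ uniformly on $T$ by a trigonometric polynomial. Writing $v=\rme^{\rmi\theta}$, $w=\rme^{\rmi\phi}$, a trigonometric polynomial $\sum_{|a|,|b|\le N}d^{(k)}_{ab}\rme^{\rmi(a\theta+b\phi)}$ becomes the Laurent polynomial $\tilde c_k(v,w)=\sum_{|a|,|b|\le N}d^{(k)}_{ab}v^aw^b$ on $T$. Since trigonometric polynomials are dense in the continuous functions on $T$ (Stone--Weierstrass, or Fej\'er summation of the Fourier series), the $\tilde c_k$ can be chosen to approximate the $c_k$ to any prescribed uniform accuracy. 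Setting $\tilde g=u^m+\sum_k\tilde c_k u^k$ and clearing the negative exponents by a common monomial factor, I would define
\begin{equation}
f(u,v,w)=v^Nw^N\,\tilde g(u,v,w)=v^Nw^N u^m+\sum_{k=0}^{m-1}\bigl(v^Nw^N\tilde c_k(v,w)\bigr)u^k ,
\end{equation}
which is now a genuine holomorphic polynomial. Because $v^Nw^N$ never vanishes on $T$, we have $f^{-1}(0)\cap(\mathbb{C}\times T)=\tilde g^{-1}(0)\cap(\mathbb{C}\times T)$, and the leading $u$-coefficient $v^Nw^N$ is not identically zero, so $\deg_u f=m$ as required. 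Note that $N$ is dictated by the decay of the Fourier coefficients of the $c_k$ and is not controlled a priori, which is precisely why this is an existence statement with no accompanying bound on the $v$- and $w$-degrees.

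It remains to argue that for a sufficiently fine approximation the zero set of $\tilde g$ is equivalent to $B$, and this is the step I expect to be the main obstacle. The key geometric input is that, $B$ having no branch points, the $m$ roots are distinct at every point of the compact torus, so by continuity there is a uniform lower bound $\delta>0$ on the pairwise distance $\min_{i\ne j}|u_i(v,w)-u_j(v,w)|$. Since the roots of a monic polynomial depend continuously on its coefficients, a uniform bound $\|\tilde c_k-c_k\|_\infty<\varepsilon$ forces the roots of $\tilde g(\cdot,v,w)$ to lie within $\eta(\varepsilon)$ of those of $g(\cdot,v,w)$, uniformly in $(v,w)$, with $\eta(\varepsilon)\to0$. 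Choosing $\varepsilon$ so small that $\eta<\delta/2$ guarantees that $\tilde g$ has $m$ distinct roots over every point of $T$, so that $\tilde g^{-1}(0)\cap(\mathbb{C}\times T)$ is again an unbranched surface braid of degree $m$. Finally, linearly interpolating the coefficients, $c_k^{(s)}=(1-s)c_k+s\tilde c_k$ for $s\in[0,1]$, keeps all intermediate polynomials within the same accuracy and hence keeps their roots distinct throughout; tracking the resulting root configurations yields a fibre-preserving isotopy from $B$ to $f^{-1}(0)\cap(\mathbb{C}\times T)$, which extends to an ambient isotopy. Quantifying the continuous dependence of the roots on the coefficients uniformly over the torus, and verifying that the interpolation produces a genuine isotopy of surface braids rather than merely a homotopy, are the points that require the most care.
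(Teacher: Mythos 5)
Your proof is correct, but it organises the construction differently from the paper. The paper approximates the \emph{parametrisation of each sheet}: it writes each component of $B$ as the graph of functions $F_C,G_C$ on a covering torus (the arguments $\frac{\varphi+2\pi(j-1)}{s_{C_1}}$, etc., absorb the monodromy that permutes the sheets), invokes Stone--Weierstrass to replace these by trigonometric polynomials up to equivalence of surface braids, and then forms the product $\prod\bigl(u-(F_C+\rmi G_C)\bigr)$ before clearing the denominators in $v$ and $w$. You instead approximate the \emph{elementary symmetric functions} $c_k$ of the fibre of $B$ over each point of the torus; these are automatically single-valued and continuous on $S^1\times S^1$ regardless of the monodromy, so you never need the covering-torus reparametrisation at all. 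The trade-off is that your route must then re-establish that the zero set of the perturbed polynomial is still equivalent to $B$, which you do via the uniform root-separation bound $\delta$ and the continuous dependence of simple roots on coefficients, together with the linear interpolation of coefficients to produce the fibre-preserving isotopy; the paper's route builds the equivalence into the choice of parametrisation (a perturbed section of an unbranched covering is isotopic to the original through sections) and so leaves this step largely implicit. Both arguments rest on Stone--Weierstrass, both only give existence with control of $\deg_u f=m$ but no bound on the degrees in $v$ and $w$, and your closing remarks correctly identify the uniformity of the root-coefficient continuity and the upgrade from homotopy to isotopy as the points needing care — both of which your argument does handle.
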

Definitions of surface braids and the notion of equivalence between surface braids are given in Section \ref{sec:2dim}.

%Knots in the context of polynomials (algebraic links, transitive $\mathbb{C}$-links)

%Knotted fields in Physics (many different systems and role of polynomials in construction)

%We have algorithm that does blba. 
%Connection between properties of polynomials (degree, number of argument-critical points) and braid topology (number of strands, crossings, inhomogeneity)

%Our algorithm allows the construction of arbitrarily knotted configurations. Cannot be expected to represent anything physical. Time evolution destroys the knot.

%(For example stable knots)

The remainder of this paper is structured as follows. Section \ref{sec:review} reviews the construction of 1-dimensional links $L$ as vanishing sets of polynomials $f:\mathbb{R}^4\to\mathbb{R}^2$, $f^{-1}(0)\cap S^3=L$. In Section \ref{sec:loop} we describe the algorithm for Theorem \ref{thm:intro1}, which allows us to construct arbitrary loop braid closures as subsets of vanishing sets of polynomials $f:\mathbb{R}^5\to\mathbb{R}^2$ on $S^4$. We also prove a bound on the polynomial degree and discuss possible applications in theoretical physics. Moreover, we show that Theorem \ref{thm:intro1} can be extended to certain elements of other motion groups of split links and prove Proposition \ref{prop:vector}. Section \ref{sec:2dim} introduces the algorithm for Theorem \ref{thm:intro2}, which again leads to a bound on the degree of the constructed polynomials, and proves Theorem \ref{prop:holosurf1}. 

This work is supported by JSPS KAKENHI Grant Numbers JP18F18751 and JP19H01788. The first author is supported as a JSPS International Research Fellow. The authors would like to thank Daniel Peralta-Salas and an anonymous referee for valuable comments.

\section{Review of the classical case}
\label{sec:review}
In \cite{bode:2016polynomial} we outlined an algorithm that constructs for any given link type $L$ in $S^3$ a polynomial $f:\mathbb{R}^4\to\mathbb{R}^2$ such that $f^{-1}(0)\cap S^3=L$. This construction takes as an input a braid that closes to $L$ and can be performed so that we obtain information about the degree and the number of argument-critical points of $f$ on $S^3$ from the input braid word. In this section we review the algorithm given in \cite{bode:2016polynomial}, which should make the generalisation to higher-dimensions in the subsequent sections easier to follow.

Let $B$ be a braid on $s$ strands that closes to the given link $L$. We denote the set of link components of the closure of $B$ by $\mathfrak{C}$ and the number of strands that form a component $C\in\mathfrak{C}$ by $s_C$. Then a parametrisation of $B$ in $\mathbb{R}^2\times[0,2\pi]$ is given by
\begin{equation}
\label{eq:para}
\bigcup_{C\in\mathfrak{C}}\bigcup_{j=1}^{s_C}\left\{\left(X_{C,j}(t),Y_{C,j}(t),t\right)|t\in[0,2\pi]\right\},
\end{equation}
where $X_{C,j}$ and $Y_{C,j}$ are smooth functions from $[0,2\pi]$ to $\mathbb{R}$ such that for any $t\in[0,2\pi]$ $(X_{C,j}(t),Y_{C,j}(t))=(X_{C',j'}(t),Y_{C',j'}(t))$ implies $C=C'$ and $j=j'$ and such that for every $(C,j)$ we have $(X_{C,j}(2\pi),Y_{C,j}(2\pi))=(X_{C,j+1}(0),Y_{C,j+1}(0))$, where the index $j$ is taken $\text{ mod }s_C$.

Given a braid parametrisation as in Eq. (\ref{eq:para}) we can define a family of functions $g_{\lambda}:\mathbb{C}\times[0,2\pi]\to\mathbb{C}$ via
\begin{equation}
\label{eq:g}
g_{\lambda}(u,t)=\prod_{C\in\mathfrak{C}}\prod_{j=1}^{s_C}\left(u-\lambda(X_{C,j}(t)+\rmi Y_{C,j}(t))\right).
\end{equation}
For every choice of the real parameter $\lambda>0$ the vanishing set of $g_{\lambda}$ is the braid $B$ as in the parametrisation in Eq. (\ref{eq:para}), but scaled in the $x$- and $y$-coordinates by $\lambda$.

If for every $C$ there are trigonometric polynomials $F_C$ and $G_C:[0,2\pi]\to\mathbb{R}$ such that
\begin{equation}
\label{eq:trigo}
(X_{C,j}(t),Y_{C,j}(t))=\left(F_{C}\left(\frac{t+2\pi (j-1)}{s_C}\right),G_{C}\left(\frac{t+2\pi (j-1)}{s_C}\right)\right),
\end{equation}
then the function $g_{\lambda}$ is not only a polynomial in the complex variable $u$, but also in $\rme^{\rmi t}$ and $\rme^{-\rmi t}$ once the product in Eq. (\ref{eq:g}) is expanded \cite{bode:2016polynomial}.

This means that there is a polynomial $f_{\lambda}:\mathbb{C}^2\to\mathbb{C}$ in complex variables $u$, $v$ and the complex conjugate $\overline{v}$ such that $f_{\lambda}(u,\rme^{\rmi t})=g_{\lambda}(u,t)$. We obtain $f_{\lambda}$ from $g_{\lambda}$ by replacing every instance of $\rme^{\rmi t}$ in the expanded product of Eq. (\ref{eq:g}) by $v$ and every instance of $\rme^{-\rmi t}$ by the conjugate $\overline{v}$. We call such functions that are holomorphic with respect to one of the complex variables but not with respect to the other \textit{semiholomorphic}. We show in \cite{bode:2016polynomial} that for small enough values of $\lambda>0$ the intersection $f_{\lambda}^{-1}(0)\cap S^3$ is isotopic to $L$, the closure of $B$.

We can now summarize the algorithm in \cite{bode:2016polynomial} as follows:
\ \\

\noindent
\textbf{Algorithm 0:}
\begin{steps}
\item From the given braid word find the trigonometric polynomials $F_C$.
\item Next find the trigonometric polynomials $G_C$. This results in a braid parametrisation as in Eq. (\ref{eq:para}) and Eq. (\ref{eq:trigo})
\item Define $g_{\lambda}$, write it as a polynomial by expanding the product and define $f_{\lambda}$.
\item Determine how small $\lambda$ has to be chosen.
\end{steps}

\begin{theorem}
(cf. \cite{bode:2016polynomial}) For every braid $B$ on $s=\sum_C s_C$ strands with a diagram with $\ell$ crossings Algorithm 0 constructs a semiholomorphic polynomial $f:\mathbb{R}^4\to\mathbb{R}^2$ such that the vanishing set $f^{-1}(0)\cap S^3$ on the unit 3-sphere is ambient isotopic to the closure of $B$. Furthermore, we have 
\begin{equation}
\deg f\leq \sum_C\max\left\{\left\lfloor\frac{(s_C+1)(s_C\ell-1)+\ell s_C(s-s_C)-1}{2}\right\rfloor,s_C\right\}.
\end{equation}
\end{theorem}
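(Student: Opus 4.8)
The plan is to split the statement into its two claims. The ambient isotopy assertion $f^{-1}(0)\cap S^3\cong L$ is already delivered by the construction recalled above --- for $\lambda>0$ small enough this is proved in \cite{bode:2016polynomial} --- so all the work lies in the degree estimate. Writing $f=f_\lambda$ and reading off Eq.~\eqref{eq:g}, I factor $f=\prod_{C\in\mathfrak{C}}f_C$ with $f_C(u,v,\overline v)\defeq\prod_{j=1}^{s_C}\bigl(u-\lambda(X_{C,j}+\rmi Y_{C,j})\bigr)$ the contribution of the single component $C$ after the substitution $\rme^{\rmi t}\mapsto v$, $\rme^{-\rmi t}\mapsto\overline v$. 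Since the total degree of a product is at most the sum of the total degrees, $\deg f\le\sum_{C}\deg f_C$, and it is enough to prove $\deg f_C\le\max\{N_C,s_C\}$ with $N_C\defeq\max\{\deg F_C,\deg G_C\}$, and then to bound $N_C$.

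For the per-component estimate I would use the cyclic symmetry encoded in Eq.~\eqref{eq:trigo}. With $w=\rme^{\rmi t/s_C}$ and $\zeta=\rme^{2\pi\rmi/s_C}$ the $j$-th factor is $u-A_j(w)$, where $A_j(w)=\lambda\sum_{|k|\le N_C}c_k\zeta^{k(j-1)}w^k$ and the $c_k$ are the Fourier coefficients of $F_C+\rmi G_C$. The relation $A_j(\zeta w)=A_{j+1}(w)$ (indices mod $s_C$) shows that the multiset $\{A_j\}_{j}$, and hence every elementary symmetric function $e_r(A_1,\dots,A_{s_C})$, is invariant under $w\mapsto\zeta w$; a Laurent polynomial in $w$ fixed by $w\mapsto\zeta w$ is a Laurent polynomial in $w^{s_C}=\rme^{\rmi t}$, i.e.\ in $v$ and $\overline v$ --- which is exactly what makes $f$ a polynomial at all. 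The top power of $w$ in $e_r$ is at most $rN_C$ and is divisible by $s_C$, so the coefficient of $u^{s_C-r}$ in $f_C$ has $v$- and $\overline v$-degree at most $\lfloor rN_C/s_C\rfloor$. Each monomial of $f_C$ therefore has total degree at most $(s_C-r)+\lfloor rN_C/s_C\rfloor$, and since the affine function $r\mapsto(s_C-r)+rN_C/s_C$ on $[0,s_C]$ is maximised at an endpoint, $\deg f_C\le\max\{N_C,s_C\}$.

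The genuinely delicate step, and the one I expect to be the main obstacle, is bounding $N_C$ from Steps 1 and 2 of Algorithm~0. Here I would unwind the explicit construction: $F_C$ and $G_C$ are obtained by trigonometric interpolation through a finite list of sample values that prescribe the $x$- and $y$-coordinates of the strands of $C$, arranged so that two strands coincide in $x$ only at a crossing and so that over- and under-strands are separated in $y$. Over one period of $\tau$ the curve representing $C$ runs through the braid $s_C$ times and must resolve its own crossings as well as the crossings against the remaining $s-s_C$ strands; counting the prescribed values carefully should produce a node count whose two contributions rearrange to $(s_C+1)(s_C\ell-1)$ and $\ell s_C(s-s_C)$. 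Since a real trigonometric polynomial interpolating $P$ prescribed values can be chosen of degree at most about $P/2$, this yields $N_C\le\bigl\lfloor\frac{(s_C+1)(s_C\ell-1)+\ell s_C(s-s_C)-1}{2}\bigr\rfloor$, and substituting into $\deg f\le\sum_C\max\{N_C,s_C\}$ gives the claim. The obstacle is precisely the bookkeeping in this last step: determining the exact number of interpolation nodes forced by the crossing data --- with the correct parity and constant, and uniformly over all components --- so that the count reproduces the stated numerator rather than a looser $O(s_C\ell s)$ bound.
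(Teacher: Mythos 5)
Your proposal follows the same route as the paper: the topological claim is delegated to \cite{bode:2016polynomial}, the degree is controlled component by component via $\deg f\le\sum_C\deg f_C$, and each $\deg f_C$ is bounded by $\max\{N_C,s_C\}$ with $N_C=\max\{\deg F_C,\deg G_C\}$. Your elementary-symmetric-function argument for this last inequality --- using $A_j(\zeta w)=A_{j+1}(w)$ to force every exponent of $w$ appearing in $e_r(A_1,\dots,A_{s_C})$ to be divisible by $s_C$, so that the coefficient of $u^{s_C-r}$ has $(v,\overline v)$-degree at most $\lfloor rN_C/s_C\rfloor$ and each monomial has total degree at most $(s_C-r)+\lfloor rN_C/s_C\rfloor$ --- is correct and is in fact more explicit than anything recorded here; the paper simply asserts the corresponding identity $\deg f=\sum_C\max\{\dots\}$ (compare the proof of Proposition \ref{prop:bound}, where the same bookkeeping reappears with an extra factor of $2$).

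The step you flag as the main obstacle is a genuine gap, and it is precisely the quantitative content of the ``Furthermore'' clause: you never establish $N_C\le\lfloor((s_C+1)(s_C\ell-1)+\ell s_C(s-s_C)-1)/2\rfloor$, only that some count of interpolation nodes ``should'' rearrange to the stated numerator. The paper does not rederive this either; in Proposition \ref{prop:bound} it imports from \cite{bode:2016polynomial} the two facts $\deg F_C=\lfloor(s_C\ell-1)/2\rfloor$ (interpolation through the $s_C\ell$ nodes given by one sample per strand per crossing interval) and the stated bound on $\deg G_C$. Your identification of the two contributions is right in outline: the nodes for $G_C$ come from the crossings of the graphs of the interpolating functions $F_{C}\bigl(\tfrac{t+2\pi(j-1)}{s_C}\bigr)$, with each self-crossing of $C$ contributing two nodes to $C$'s list and each crossing of $C$ with another component contributing one. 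What remains to be checked, using the fact that a trigonometric polynomial of degree $N$ has at most $2N$ zeros per period, is (i) the bound on the number of self-crossings of $C$ that produces the $(s_C+1)(s_C\ell-1)$ term, (ii) the bound of $\ell$ crossings per pair of strands from distinct components that produces $\ell s_C(s-s_C)$, and (iii) that interpolation through $P$ nodes can be achieved with degree $\lfloor (P-1)/2\rfloor$ while still leaving enough freedom in the data values $y_{i,j,k}$ to force the prescribed crossing signs and cancel the spurious crossings of Remark \ref{rem:case}. None of this is deep, but without it the specific numerator --- as opposed to an $O(s_C\ell s)$ bound --- is not established.
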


The only part of this algorithm that requires some additional explanation and is not computationally trivial consists of the first two steps, the construction of the trigonometric polynomials $F_C$ and $G_C$. Both can be obtained by trigonometric interpolation. There are other ways to do this as well, but this method ensures a relatively low degree.

A detailed description of the procedure can be found in \cite{bode:2016polynomial}. Here we want to give an example of how the interpolation works to highlight the key points of the construction that we generalize in the subsequent sections.

\begin{figure}
\centering
\labellist
\large
\pinlabel a) at 10 500
\endlabellist
\includegraphics[height=3cm]{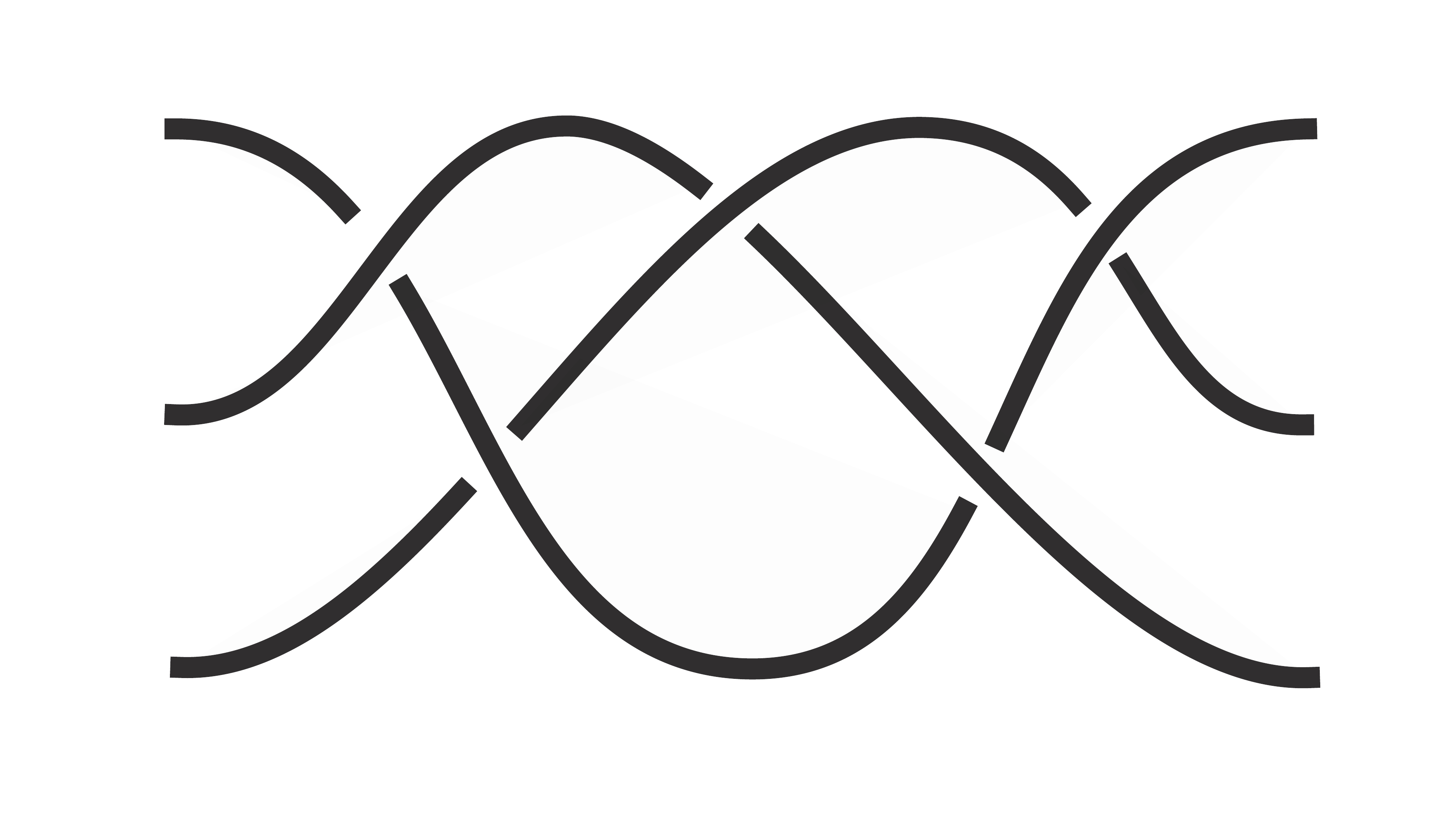}\\
\vspace{0.5cm}
\labellist
\large
\pinlabel b) at 0 130
\endlabellist
\includegraphics[height=2cm]{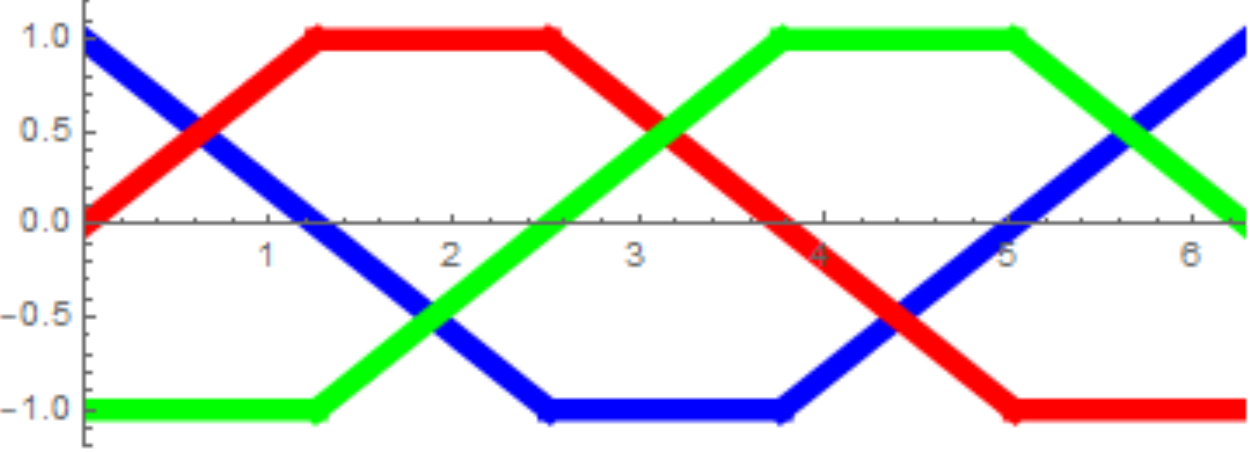}\quad
\labellist
\large
\pinlabel c) at 0 130
\pinlabel $L_{C_1,1}$ at 390 120
\pinlabel $L_{C_2,2}$ at 390 60
\pinlabel $L_{C_2,1}$ at 390 5
\endlabellist
\includegraphics[height=2cm]{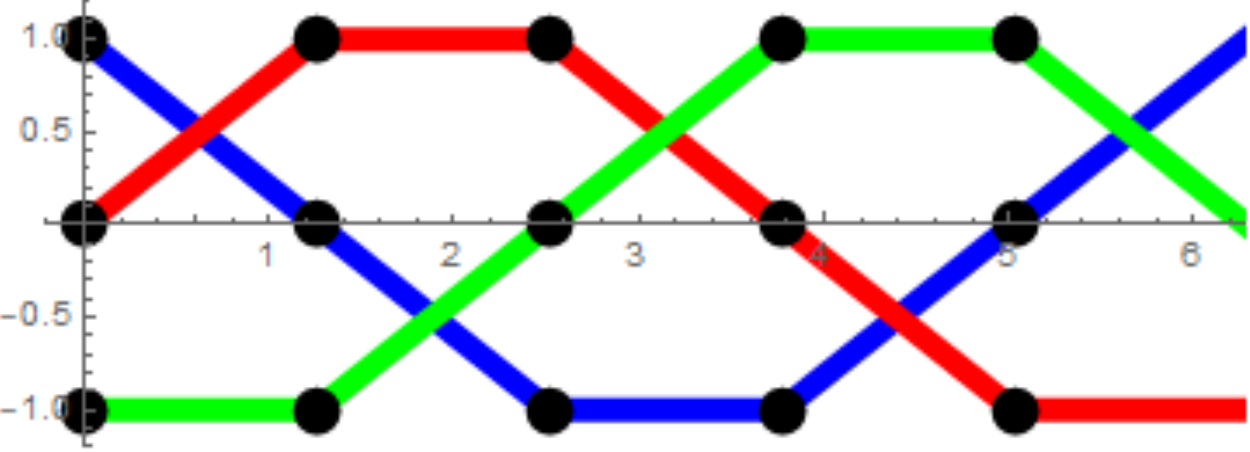}\\
\vspace{0.5cm}
\labellist
\large
\pinlabel d) at 0 120
\pinlabel $D_{C_1}$ at 319 120
\endlabellist
\includegraphics[height=2cm]{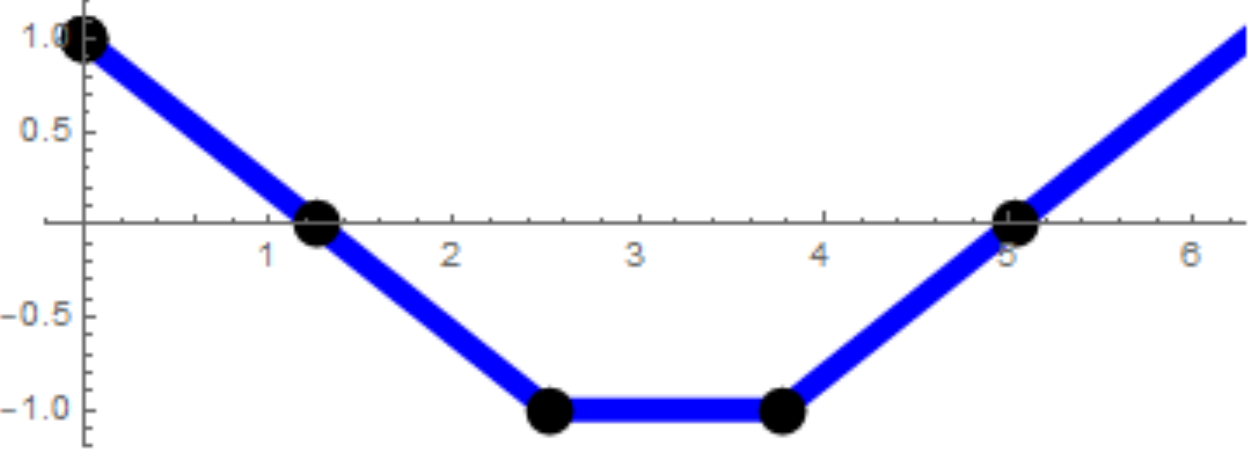}\quad
\labellist
\large
\pinlabel e) at 0 120
\pinlabel $D_{C_2}$ at 370 90
\endlabellist
\includegraphics[height=2cm]{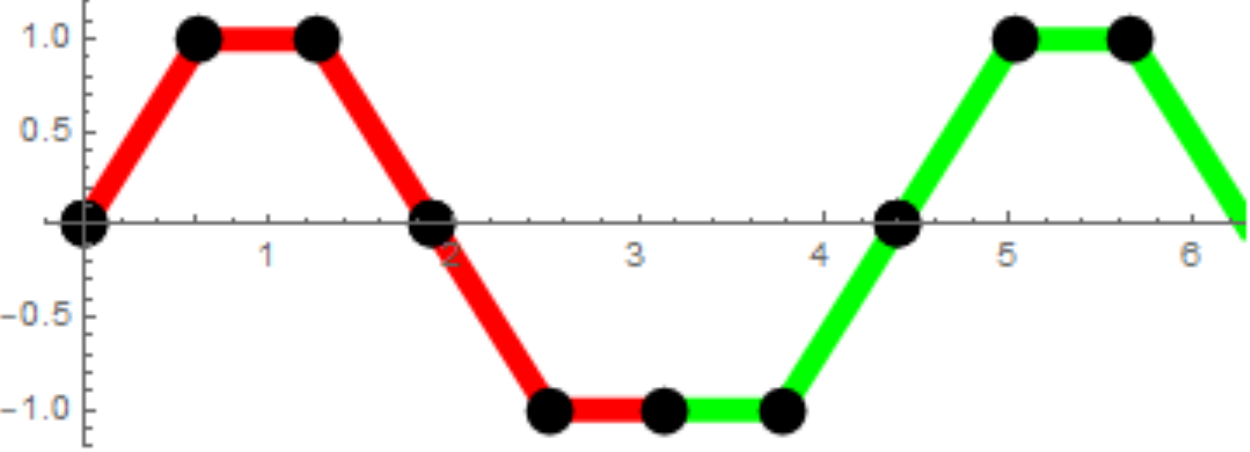}\\
\vspace{0.5cm}
\labellist
\large
\pinlabel f) at 0 230
\pinlabel $F_{C_1}$ at 330 182
\endlabellist
\includegraphics[height=3.4cm]{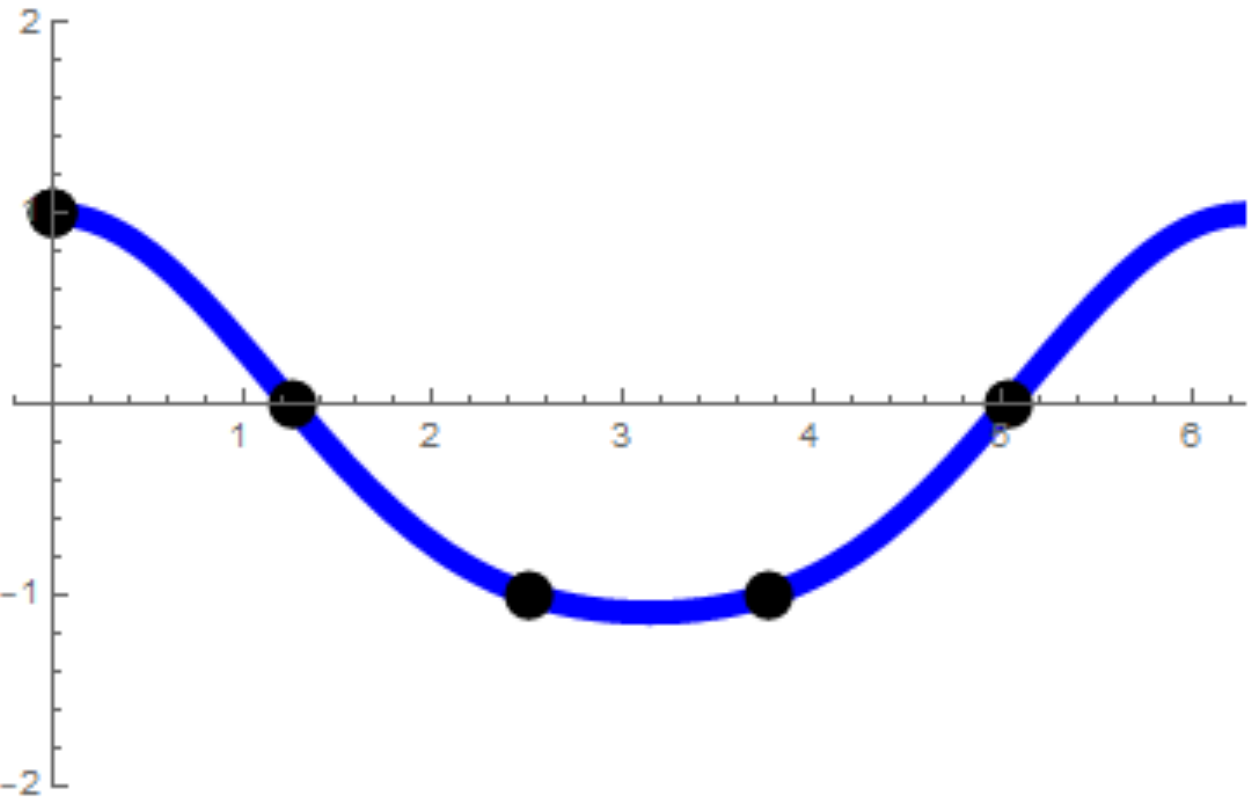}\quad\quad%\hspace{0.01cm}
\labellist
\large
\pinlabel g) at 0 230
\pinlabel $F_{C_2}$ at 370 140
\endlabellist
\includegraphics[height=3.4cm]{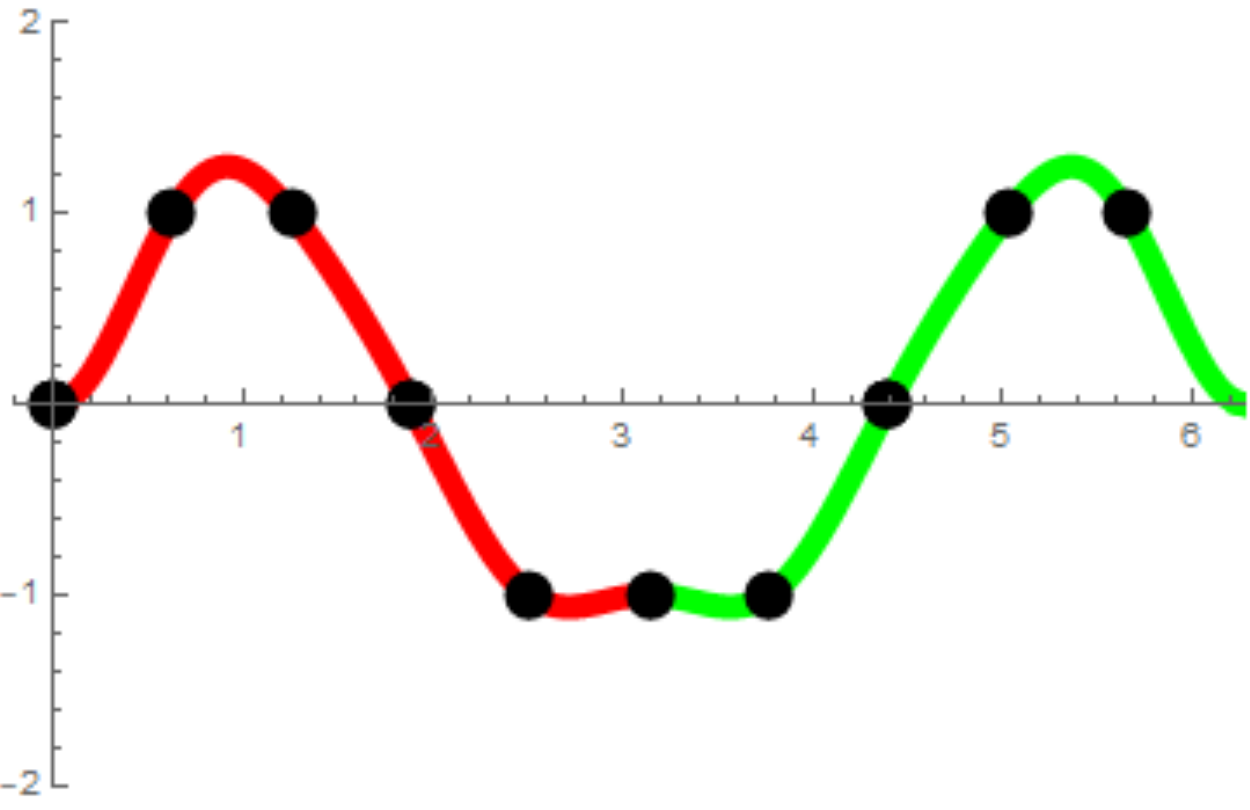}\\
\vspace{0.5cm}
\labellist
\large
\pinlabel h) at 0 230
\endlabellist
\includegraphics[height=3.5cm]{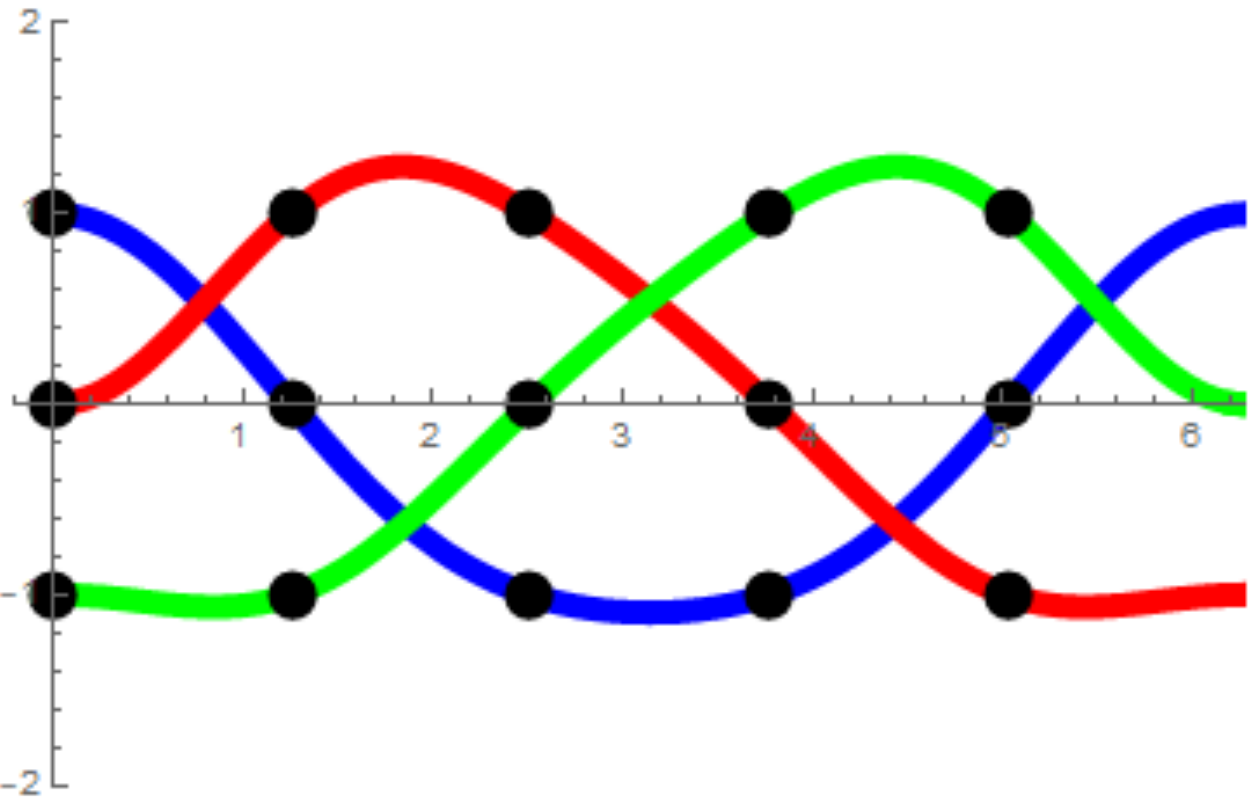}
\caption{Trigonometric interpolation for $F_{C_1}$ and $F_{C_2}$ for a braid that closes to the Whitehead link. %a) A braid diagram of the braid $B$. b) The same braid diagram (without signs of crossings) as the union of graphs of piecewise-linear functions. c) The same graphs with the data points $\left(t_k-\tfrac{\pi}{\ell},L_{C_i,j}\left(t_k-\tfrac{\pi}{\ell}+2\pi\right)\right)$. The strands of each component can be combined to give the graph of the function $D_{C_i}$. d) The graph of $D_{C_1}$ with the data points $\left(\tfrac{t_k-\tfrac{\pi}{\ell}+2\pi (j-1)}{s_1},L_{C_1,j}\left(\tfrac{t_k-\tfrac{\pi}{\ell}}{s_1}\right)\right)$. e) The graph of $D_{C_2}$ with the data points $\left(\tfrac{t_k-\tfrac{\pi}{\ell}+2\pi (j-1)}{s_2},L_{C_2,j}\left(\tfrac{t_k-\tfrac{\pi}{\ell}}{s_2}\right)\right)$. f) The graph of the interpolating trigonometric polynomial $F_{C_1}$. g) The graph of the interpolating trigonometric polynomial $F_{C_2}$. h) The union of the graphs of $F_{C_i}\left(\tfrac{t+2\pi (j-1)}{s_i}\right)$, $i=1,2$, $j=1,\ldots,s_i$, forms the same crossing pattern as $B$. 
\label{fig:whitehead}}
\end{figure}

%%%%
Suppose the braid that we want to construct is $B=\sigma_1^{-1}\sigma_2\sigma_1^{-1}\sigma_2\sigma_1^{-1}$ shown in Figure \ref{fig:whitehead}a), which closes to the Whitehead link. We call its two components $C_1$ and $C_2$. The numbers $s_1$ and $s_2$ denote the numbers of strands that make up $C_1$ and $C_2$, respectively. Hence $s_1\defeq s_{C_1}=1$ and $s_2\defeq s_{C_2}=2$, while the total number of strands is $s=s_1+s_2=3$. The length of the braid is the number of crossings $\ell=5$. We consider a braid diagram of the braid that we want to construct, but neglect the signs of the crossings for now. This results in a braid diagram as in Figure \ref{fig:whitehead}b). We can interpret this braid diagram without signs as the union of the graphs of some piecewise linear functions $L_{C_i,j}:[0,2\pi]\to\mathbb{R}, t\mapsto L_{C_i,j}(t)$, $i=1,2$, $j=1,\ldots,s_i$, such that $L_{C_1,1}(2\pi)=L_{C_1,1}(0)$, $L_{C_2,1}(2\pi)=L_{C_2,2}(0)$ and $L_{C_2,2}(2\pi)=L_{C_2,1}(0)$. Furthermore, we can require that the crossings occur at $t_k=\tfrac{2\pi k}{5}+\tfrac{\pi}{5}$, $k=0,1,2,3,4$.

From this we obtain one piecewise linear function $D_{C_i}:[0,2\pi]\to\mathbb{R}$ for every component $C_i$ via 
\begin{equation}
D_{C_i}\left(\frac{t+2\pi (j-1)}{s_i}\right)=L_{C_i,j}(t), \quad t\in[0,2\pi], \quad i=1,2,\quad j=1,\ldots,s_i. 
\end{equation}

The graphs of $D_{C_i}$, $i=1,2$, are shown in Figure \ref{fig:whitehead}d) and e), respectively.

We now perform two trigonometric interpolations, one for each component. For each $i=1,2$ we interpolate the data points $\left(\tfrac{t_k-\tfrac{\pi}{5}}{s_i}+\tfrac{2\pi (j-1)}{s_i},L_{C_i,j}\left(t_k-\tfrac{\pi}{5}\right)\right)$ and call the resulting trigonometric polynomial $F_{C_i}$. The functions $F_{C_i}\left(\tfrac{t+2\pi (j-1)}{s_i}\right)$, $i=1,2$, $j=1,\ldots,s_i$, are the parametrisations of the $x$-coordinates of the strands in the component $C_i$. The interpolating functions are
\begin{equation}
F_{C_1}(t)=-0.200+1.047\cos(t)+0.153\cos(2t),
\end{equation}
shown in Figure \ref{fig:whitehead}f) and 
\begin{align}
F_{C_2}(t)=&0.100+0.971\cos(t)-0.524\cos(2t)\nonumber\\
&-0.371\cos(3t)-0.076\cos(4t)-0.100\cos(5t),
\end{align}
shown in Figure \ref{fig:whitehead}g). All coefficients are rounded to 3 significant digits.

\begin{remark}\label{rem:case}
In the case of our example, the graphs of the functions $F_{C_i}\left(\tfrac{t+2\pi (j-1)}{s_i}\right):[0,2\pi]\to\mathbb{R}$ form the same crossing pattern as the desired braid, see Figure \ref{fig:whitehead}h). 
However, it is important to note that in general the union of the graphs of the functions $F_{C_i}\left(\tfrac{t+2\pi (j-1)}{s_i}\right):[0,2\pi]\to\mathbb{R}$ do not necessarily form the same crossing pattern as the desired braid $B$. Any set of interpolating functions permutes the strands between $\tfrac{2\pi k}{\ell}$ and $\tfrac{2\pi (k+1)}{\ell}$, $k=0,1,2,\ldots,\ell-1$, where $\ell$ is the length of the braid word, i.e., the number of crossings, in exactly the same way as the desired braid. But for any such permutation there are infinitely many possible braids. An example of this is indicated in Figure \ref{fig:case}, where Figure \ref{fig:case}a) shows the desired crossing pattern between $t=0$ and $t=2\pi/5$ and Figure \ref{fig:case}b) shows the graphs of some interpolating functions that induce the same permutation, but do not have the same crossing pattern. Figure \ref{fig:case}c) shows how we can choose the signs of the crossings of the braid in Figure \ref{fig:case}b) such that the resulting braid is isotopic to the one in Figure \ref{fig:case}a) with the desired crossing sign.
%Two possible scenarios are indicated in Figure \ref{fig:case}a) and b), one of which gives the correct crossing pattern, while the other does not.
\end{remark}

\begin{figure}
\centering
\labellist
\large
\pinlabel a) at 0 250
\endlabellist
\includegraphics[height=2.5cm]{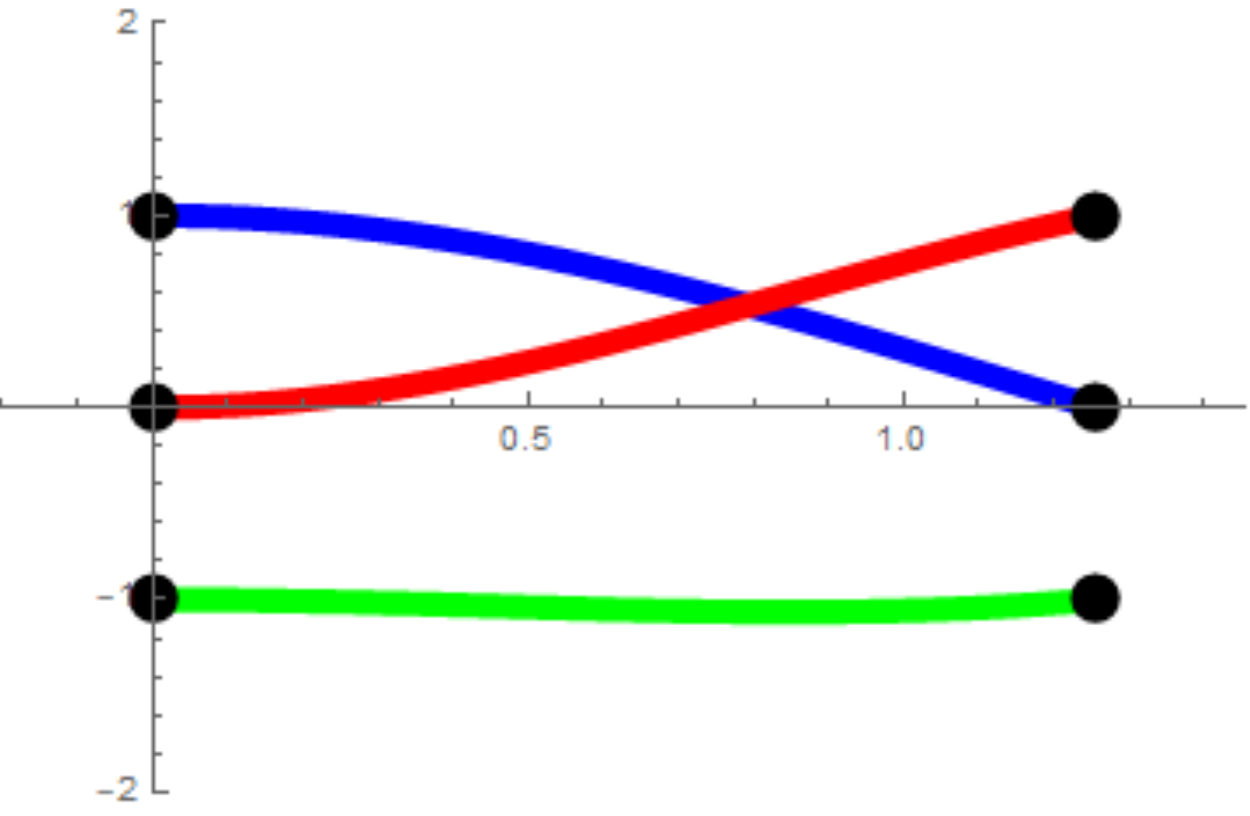}
\labellist
\large
\pinlabel b) at 0 250
\endlabellist
\includegraphics[height=2.5cm]{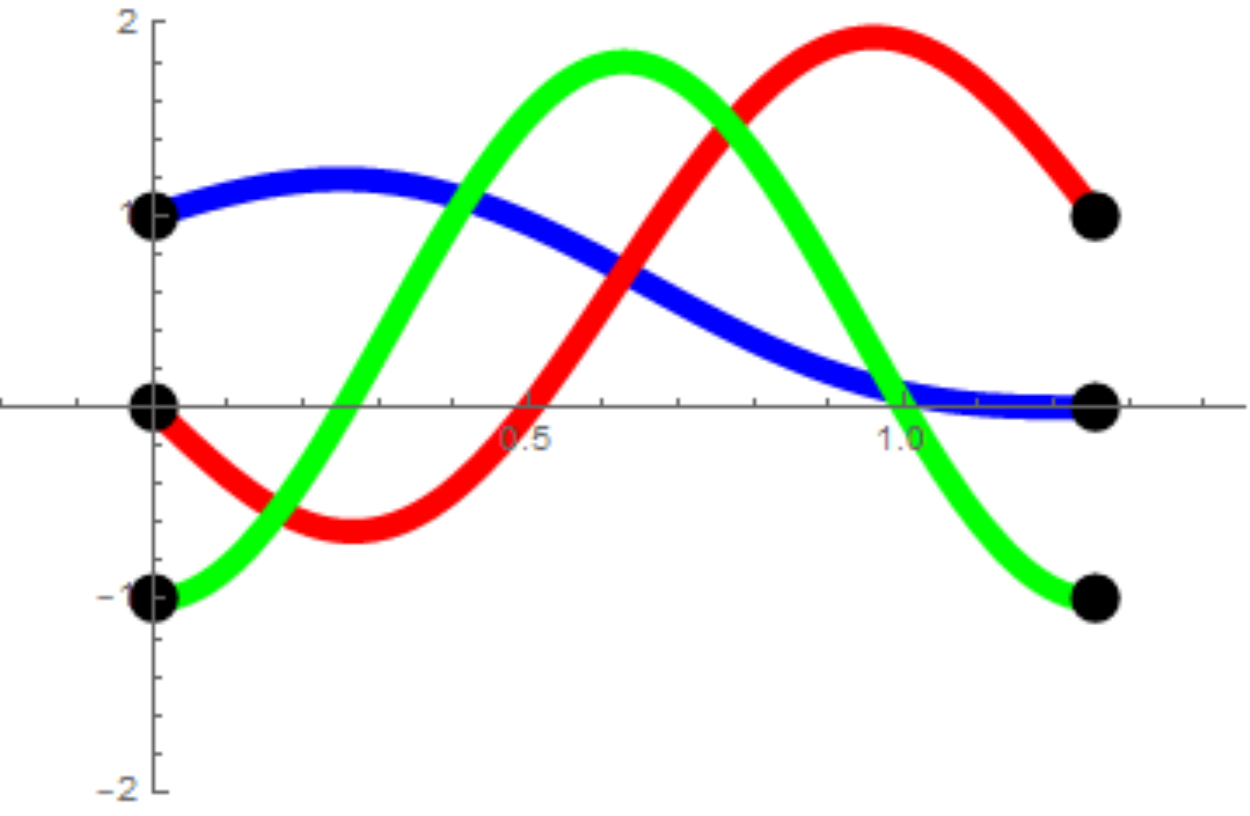}
\labellist
\large
\pinlabel c) at 0 250
\endlabellist
\includegraphics[height=2.5cm]{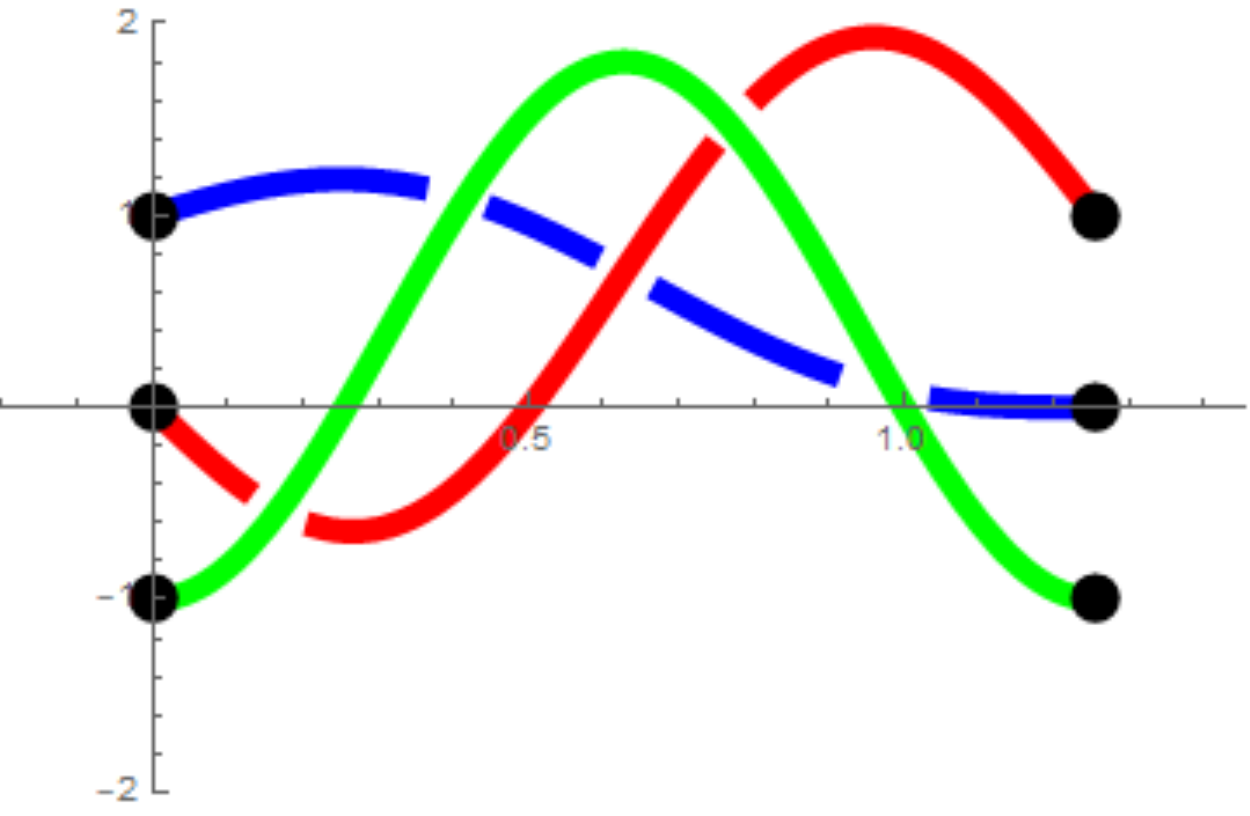}
\caption{An example of graphs of interpolating functions.  %Choosing data points for the trigonometric interpolation of $G_{C_i}$ (i.e., choosing signs of crossings) such that the unwanted additional crossings cancel. %a) The desired crossing pattern between $t=0$ and $t=2\pi/5$. b) The graphs of other interpolating functions induce the same permutation, but do not have the same crossing pattern. c) The signs can be chosen such that the resulting braid is isotopic to the desired braid.
\label{fig:case}}
\end{figure}

In any case, we can perform a trigonometric interpolation for the functions $G_{C_i}$ such that the resulting trigonometric polynomials provide us with a parametrisation of a braid that is isotopic to $B$ as follows. We interpret the union of the graphs of $t\mapsto F_{C_i}\left(\tfrac{t+2\pi (j-1)}{s_i}\right)$, $[0,2\pi]\to\mathbb{R}$, $i=1,2$, $j=1,\ldots,s_i$, as a braid diagram without signs of crossings. If a crossing occurs between the strands $(C_i,j)$ and $(C_{i'},j')$ at $t=t_{*}$, i.e., if $F_{C_i}\left(\tfrac{t_{*}+2\pi (j-1)}{s_i}\right)=F_{C_{i'}}\left(\tfrac{t_{*}+2\pi (j'-1)}{s_{i'}}\right)$, then we add the two points $\left(\tfrac{t_{*}+2\pi (j-1)}{s_i},y\right)$ and $\left(\tfrac{t_{*}+2\pi (j'-1)}{s_{i'}},y'\right)$ to our list of data points for $C_i$ and $C_{i'}$, respectively. If the desired braid $B$ has a crossing between the same pair of strands in the same interval of length $\tfrac{2\pi}{\ell}$, we can choose $y$ and $y'$ such that the crossing obtains the required sign, i.e., $y>y'$ if $(C_i,j)$ is passing over $(C_{i'},j')$ and vice versa. If the crossing is one of the additional, unwanted crossings as in Figure \ref{fig:case}b), we can choose the sign of the crossing, i.e., $y>y'$ or $y<y'$, such that the unwanted crossings cancel in pairs (cf. Figure \ref{fig:case}c)) and the resulting braid is isotopic to the desired one. This is for example achieved by assigning to each strand $(C_i,j)$ for each interval $[\tfrac{2\pi k}{\ell},\tfrac{2\pi(k+1)}{\ell})$ a real number $y_{i,j,k}$ such that they are pairwise distinct and $y_{i,j,k}>y_{i',j',k}$ if in the unique crossing of the desired braid $B$ in the interval $[\tfrac{2\pi k}{\ell},\tfrac{2\pi(k+1)}{\ell})$ the strand $(C_i,j)$ passes over the strand $C_{i',j'}$. The number $y_{i,j,k}$ is the chosen value for all data points for the strand $(C_i,j)$ coming from crossings in the intverval $[\tfrac{2\pi k}{\ell},\tfrac{2\pi (k+1)}{\ell})$.

We call the interpolating trigonometric polynomial $G_{C_i}$. In the case of our example we obtain the following data points for $C_1$ and $C_2$:
\begin{align}
\label{eq:classdata}
C_1:\quad\{&(0.794,-1),(1.857,1),(4.426,-1),(5.490,1)\},\\
C_2:\quad\{&(0.794/2,1),(3.142/2,-1),(4.426/2,1),(1.857/2+\pi,-1),\nonumber\\
&(5.490/2+\pi,-1),(3.142/2+\pi,1)\},\nonumber
\end{align}
where again numbers are rounded. We have chosen the $y$-coordinate of each data point to be 1 or $-1$ depending on whether the corresponding strand is over- or underpassing.

The interpolating functions are given by
\begin{align}
G_{C_1}(t)=&0.520-0.721\cos(t)-0.341\sin(t)+0.860\cos(2t)-1.243\sin(2t),\nonumber\\
G_{C_2}(t)=&-1.630\times10^{8}+1.992\times10^{8}\cos(t)-1.630\times10^{8}\cos(2t)+2.521\times10^8\cos(3t)\nonumber\\
&-19.009\sin(t)+35.090\sin(2t)-19.090\sin(3t).
\end{align}
This shows one potential short-coming of the interpolation method. While the degrees can be taken to be relatively small, the coefficients (and hence the values of the functions) can be very large. Note however, that we have a lot of freedom in choosing the data points. Often a certain variation to these choices leads to much simpler expressions. In our example, we can use
\begin{align}
G_{C_1}(t)=&0.520-0.721\cos(t)-0.341\sin(t)+0.860\cos(2t)-1.243\sin(2t),\\
G_{C_2}(t)=&-0.250\sin(t)-0.75\sin(2t).
\end{align}
instead.

By construction
\begin{equation}
\bigcup_{i=1}^2\bigcup_{j=1}^{s_i}\left\{\left(F_{C_i}\left(\tfrac{t+2\pi (j-1)}{s_i}\right),G_{C_i}\left(\tfrac{t+2\pi (j-1)}{s_i}\right),t\right)| t\in[0,2\pi]\right\}
\end{equation}
is a parametrisation of a braid that closes to the Whitehead link and is of the desired form, i.e., given by trigonometric polynomials.

The key points can be summarized as follows. We obtain a Fourier parametrisation of any braid (up to isotopy) by trigonometric interpolation. We need to perform first an interpolation for every component to obtain the parametrisations of the $x$-coordinates of the strands (given by $F_{C_i}\left(\tfrac{t+2\pi (j-1)}{s_i}\right)$). These parametrisations might lead to additional, unwanted crossings. However, we can perform a trigonometric interpolation for every component to obtain the parametrisations of the $y$-coordinates of the strands (given by $G_{C_i}\left(\tfrac{t+2\pi (j-1)}{s_i}\right)$), which determine the signs of the crossings such that all additional, unwanted crossings cancel. Note in particular that the order of the Steps 1 and 2 in Algorithm 0 matters, since the data points for the second interpolation are only known once the interpolation in Step 1 has been completed.

The degree of the constructed polynomial $f:\mathbb{R}^4\to\mathbb{R}^2$ is closely related to the degrees of the trigonometric polynomials $F_{C}$ and $G_C$. Their degrees are determined by the number of data points used for the interpolation, which in turn are linked to the number of strands and the length of the braid word. 

Trigonometric interpolation is not the only way to obtain trigonometric braid parametrisation. We could instead use a result by Beardon, Carne and Ng \cite{critical} that allows us to lift parametrisations of critical values of polynomials to parametrisations of roots of the polynomials. Since trigonometric polynomials are $C^1$-dense in the set of $2\pi$-periodic $C^1$-functions, we can then approximate the roots by trigonometric polynomials. While trigonometric interpolation has the advantage of low polynomial degrees, this method allows us a certain control over the number of argument-critical points of the constructed polynomials $g_{\lambda}$ and consequentially of $f_\lambda|_{S^3}$. In particular, we have the following result.
%Describe algorithm.
%%%%
%{\color{red}
\begin{definition}
Let $B$ a braid on $s$ strands. Then $B$ is called \textit{homogeneous} if it can be represented by a word that for all $i\in\{1,2,\ldots,s-1\}$ contains the generator $\sigma_i$ if and only if it does not contain its inverse $\sigma_i^{-1}$. 
\end{definition}
%}
\begin{theorem}
(cf. \cite{bode:2016polynomial}) For every homogeneous braid $B$ we can construct a semiholomorphic polynomial $f:\mathbb{R}^4\to\mathbb{R}^2$ such that the vanishing set $f^{-1}(0)\cap S^3$ on the unit 3-sphere is ambient isotopic to the closure of $B$, say $L$, and $\arg(f):S^3\backslash L\to S^1$ is a fibration.
\end{theorem}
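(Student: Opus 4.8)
The plan is to build on Algorithm~0 while exploiting the extra freedom afforded by the Beardon--Carne--Ng lifting \cite{critical}, with the aim of producing a semiholomorphic $f$ satisfying $f^{-1}(0)\cap S^3=L$ for which $\arg f$ has \emph{no} critical points on $S^3\setminus L$. First I would reduce the fibration claim to this critical-point statement. On the solid torus $\{|u|\ge|v|\}$ the roots $\lambda\beta_{C,j}$ of $g_\lambda$ stay of size $O(\lambda)$, so for small $\lambda$ we have $g_\lambda\approx u^{s}$ there and $\arg f$ restricts to the standard open-book fibration with no critical points; since $L$ lies near $\{u=0\}$, the only delicate region is the complementary solid torus containing $L$. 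Granting that $\arg f$ is a submersion on all of $S^3\setminus L$, the fibration then follows from the usual Milnor-type argument: near $L$ the transverse vanishing of $f$ makes $\arg f$ fibre each meridian disc, and combined with Ehresmann's theorem on the compact complement of a tubular neighbourhood of $L$ this upgrades the submersion to a locally trivial fibration over $S^1$.

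The heart of the matter is a criterion for argument-critical points. Writing $g_\lambda(u,t)=\prod_{C,j}\bigl(u-\lambda\beta_{C,j}(t)\bigr)$ with $\beta_{C,j}=X_{C,j}+\rmi Y_{C,j}$, I would show that for all sufficiently small $\lambda>0$ every critical point of $\arg f|_{S^3}$ lies in a small neighbourhood of one of the $u$-critical points $u_c(t)$ of the polynomials $g_\lambda(\,\cdot\,,t)$, and that such a point is genuinely argument-critical precisely when the argument of the associated critical value $c(t)\defeq g_\lambda(u_c(t),t)$ is momentarily stationary, i.e. $\tfrac{\rmd}{\rmd t}\arg c(t)=0$. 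This is a semiholomorphic analogue of Milnor's computation: since $g_\lambda$ is holomorphic in $u$, the differential of $\arg f$ carries no $\rmd\overline u$ term, and for small $\lambda$ the $u$-derivative $\partial_u\log g_\lambda$ is non-vanishing except near the $u_c(t)$, so any critical point of $\arg f|_{S^3}$ must sit near such a branch, where the only remaining condition is the vanishing of $\tfrac{\rmd}{\rmd t}\arg c(t)$. Consequently $\arg f$ has no critical points on $S^3\setminus L$ as soon as every critical value $c(t)$ winds around $0\in\mathbb{C}$ strictly monotonically as $t$ runs through $[0,2\pi]$.

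It remains to realise $L$ by a parametrisation whose critical values wind monotonically, and this is exactly where homogeneity enters. In a homogeneous braid every generator $\sigma_i$ occurs with a single sign $\varepsilon_i$, so each time the two strands in positions $i,i+1$ exchange, the critical value trapped between them makes a half-turn about $0$ in the direction fixed by $\varepsilon_i$, and no crossing of the opposite sign ever reverses it. I would therefore not use trigonometric interpolation directly, but instead prescribe, for each relevant critical point, a loop of critical values that avoids $0$ and whose argument increases (or decreases) monotonically, performing one signed half-turn per crossing of the appropriate band; the Beardon--Carne--Ng result then lifts this system of critical-value loops to a genuine loop $t\mapsto p_t$ of monic polynomials whose root motion is the prescribed homogeneous braid $B$. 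Since trigonometric polynomials are $C^1$-dense among $2\pi$-periodic $C^1$ functions, I would approximate the resulting root paths by trigonometric polynomials to obtain the $F_C,G_C$ of Algorithm~0 and hence a semiholomorphic $f$; a $C^1$-small approximation changes neither the isotopy class of the braid nor the strict monotonicity of $\arg c(t)$, so the open no-critical-point condition survives.

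The main obstacle is the criterion of the second paragraph: making precise, uniformly in small $\lambda$, the correspondence between argument-critical points of $f|_{S^3}$ and stationary points of the critical-value arguments, and in particular controlling that no stray critical points appear away from the branches $u_c(t)$ as $\lambda\to0$. A secondary difficulty is combinatorial-geometric: verifying that homogeneity really does allow the critical-value loops to be chosen \emph{globally} monotone---not merely to have the correct net winding number---and that the Beardon--Carne--Ng lift of the prescribed system reproduces $B$ itself rather than some other braid inducing the same permutation of strands. Once these two points are secured, the fibration statement follows from the reduction in the first paragraph.
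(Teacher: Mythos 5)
Your proposal follows essentially the same route as the paper, which does not prove this theorem itself but cites \cite{bode:2016polynomial} and sketches exactly this method in the preceding paragraph: lift prescribed critical-value loops to root parametrisations via Beardon--Carne--Ng, approximate by trigonometric polynomials using $C^1$-density, and use homogeneity to make each critical value wind monotonically about the origin so that $\arg f$ has no critical points. The reduction to argument-critical points, the criterion via $\tfrac{\rmd}{\rmd t}\arg c(t)$, and the role of homogeneity all match the cited construction.
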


We will see in Section \ref{sec:2dim} that for certain surface braids we can also construct the desired polynomials in such a way that their argument maps are fibration maps over the circle.

\section{The loop braid group}
\label{sec:loop}

We would like to extend the algorithm from Section \ref{sec:review} to knotted surfaces. Since it is based on braid parametrisations, it is natural to consider 2-dimensional generalisations of the Artin braid group. The braid group is the motion group of $n$ unmarked points in the plane $\mathbb{R}^2$. A natural higher-dimensional analogue is therefore the loop braid group.
\begin{definition}
\label{def:loop}
The loop braid group $LB_n$ is the fundamental group of the configuration space of $n$ disjoint unordered unlinked Euclidean circles in the 3-ball $B^3$ under the condition that each of them lies in a plane that is parallel to a given fixed plane.
\end{definition}

This is only one of several possible equivalent definitions. The survey \cite{celeste} offers a good overview of the different interpretations of the loop braid group and related groups. The interpretation of moving circles, which is suggested by Definition \ref{def:loop} is convenient for applications in physics, since it suggests to think of the fourth dimension, the one along the loop in the configuration space, as time.
The loop braid group in Definition \ref{def:loop} is also referred to as the \textit{untwisted ring group}.

The group $LB_n$ has two generators for each $i=1,2,\ldots,n-1$, illustrated in Figure \ref{fig:loopgen}: $\rho_i$, which permutes the $i$th and $i+1$st ring by moving the rings around each other, and $\sigma_i$, which permutes $i$th and $i+1$st ring by shrinking the $i$th ring and threading it through the $i+1$st ring as shown in Figure \ref{fig:loopgen}. More precisely, if the $i$th and $i+1$st rings are initially placed in the $xy$-plane in $\mathbb{R}^3$ with coordinates $x$, $y$ and $z$, then $\sigma_i$ first shrinks the $i$th ring and moves it to the right with $z>0$, then moves it through the hole of the $i+1$st ring by decreasing its $z$-coordinate. Then the $i+1$st ring moves to the left in the $xy$-plane, while the $i$th ring is raised into the $xy$-plane so that after increasing its radius the $i$th and $i+1$st rings have exchanged their positions. For $\rho_i$, the $i$th and $i+1$st rings change positions while staying in the $xy$-plane. The group relations are as follows:
\begin{align}
\sigma_i\sigma_j&=\sigma_j\sigma_i,&\text{ for }|i-j|>1,\label{eq:relll1}\\
\sigma_i\sigma_{i+1}\sigma_i&=\sigma_{i+1}\sigma_i\sigma_{i+1},&\text{ for }i=1,2,\ldots,n-2,\\
\rho_i\rho_j&=\rho_j\rho_i,&\text{ for }|i-j|>1,\\
\rho_i\rho_{i+1}\rho_i&=\rho_{i+1}\rho_i\rho_{i+1}&\text{ for }i=1,2,\ldots,n-2,\\
\rho_i^2&=e,&\text{ for }i=1,2,\ldots,n-1,\\
\rho_i\sigma_j&=\sigma_j\rho_i&\text{ for }|i-j|>1,\\
\rho_{i+1}\rho_i\sigma_{i+1}&=\sigma_i\rho_{i+1}\rho_i&\text{ for }i=1,2,\ldots,n-2,\\
\sigma_{i+1}\sigma_i\rho_{i+1}&=\rho_i\sigma_{i+1}\sigma_i&\text{ for }i=1,2,\ldots,n-2.\label{eq:relll2}
\end{align}

Like for classical braids every loop braid has a unique closure (up to isotopy), which is obtained by identifying the rings at $t=0$ with the rings at $t=2\pi$. Note that this results in linked or knotted tori in $\mathbb{R}^3\times S^1\subset\mathbb{R}^4\subset S^4$.

The elements of the Artin braid group can be interpreted as geometric braids, i.e., $n$ strings in $\mathbb{R}^2\times[0,2\pi]$ which intersect each horizontal plane $\mathbb{R}^2\times\{t\}$ in exactly $n$ points. We can weaken this definition to obtain the \textit{singular braid monoid}. For a \textit{singular braid} we allow finitely many values of $t$, for which the strands intersect $\mathbb{R}^2\times\{t\}$ in $n-1$ points. Two singular braids are equivalent if they are isotopic through a sequence of singular braids, while the start and end points of the strands in $\mathbb{R}^2\times\{0\}$ and $\mathbb{R}^2\times\{2\pi\}$ are fixed. The equivalence classes of singular braids on $n$ strands form a monoid, which is generated (as a monoid) by $\sigma_i$, $\rho_i$ and $\rho_i^{-1}$, $i=1,2,\ldots,n-1$, and subject to the relations:
\begin{align}
\rho_i\rho_i^{-1}=\rho_i^{-1}\rho_i&=e, &\text{ for }i=1,2,\ldots,n-1,\label{eq:rel1}\\
\sigma_i\sigma_j&=\sigma_j\sigma_i,&\text{ for }|i-j|>1,\\
\rho_i\rho_j&=\rho_j\rho_i,&\text{ for }|i-j|>1,\\
\rho_i\rho_{i+1}\rho_i&=\rho_{i+1}\rho_i\rho_{i+1}&\text{ for }i=1,2,\ldots,n-2,\\
\rho_i^{\pm1}\sigma_j&=\sigma_j\rho_i^{\pm1}&\text{ for }|i-j|>1,\\
\rho_{i+1}\rho_i\sigma_{i+1}&=\sigma_i\rho_{i+1}\rho_i&\text{ for }i=1,2,\ldots,n-2,\\
\rho_i\rho_{i+1}\sigma_i&=\sigma_{i+1}\rho_i\rho_{i+1},&\text{ for }i=1,2,\ldots,n-2,\label{eq:rel2}\\
\sigma_i\rho_i^{\pm1}&=\rho_i^{\pm1}\sigma_i&\text{ for }i=1,2,\ldots,n-1\label{eq:rel3}.
\end{align} 
The diagrammatic interpretation of an element of the singular braid monoid differs from classical braids only in that there is a new type of crossing, corresponding to the generator $\sigma_i$, which signifies a transverse intersection of two neighbouring strands in a diagram and does not have an inverse. Such a crossing is called a \textit{singular crossing} or \textit{welded crossing}. (Note that our usage of the symbols $\sigma_i$ and $\rho_i$ is different from that in the literature, where they are actually swapped. See Remark 3.2 below.) Diagrams for the generators are shown in Figure \ref{fig:loopgen}c). Singular braids were introduced by Baez \cite{baez} and Birman \cite{birman}. A similar notion called \textit{welded braids} was introduced by Fenn, Rimányi and Rourke in \cite{fenn}. It is shown that the welded braid group has a group presentation with generators $\sigma_i$ and $\rho_i$, $i=1,2,\ldots,n-1$ and relations (\ref{eq:relll1}) through (\ref{eq:relll2}), that is the welded braid group is isomorphic to the loop braid group.

We call a singular braid a \textit{signed singular braid} if all intersection points, i.e., all `welded crossings' $\sigma_i$, are labelled with a plus- or a minus-sign. A signed singular braid can be represented by a word in $\sigma_i(+)$, $\sigma_i(-)$, $\rho_i$ and $\rho_i^{-1}$, where $\sigma_i(+)$ denotes a singular crossing labeled with a plus-sign and $\sigma_i(-)$ denotes a singular crossing with a minus-sign as in Figure \ref{fig:loopgen}c).
%Two signed singular braids are equivalent if the underlying singular braids are isotopic, i.e., equivalent modulo relations (\ref{eq:rel1}) through (\ref{eq:rel3}), with matching signs on their singular crossings.

Thus a word in $\sigma_i(+)$, $\sigma_i(-)$, $\rho_i$ and $\rho_i^{-1}$, $i=1,2,\ldots,n-1$, has two interpretations: One as a loop braid, where the generator $\sigma_i$ corresponds to $\sigma_i(+)$ and $\sigma_i^{-1}$ to $\sigma_i(-)$, and the other as a signed singular braid, where all intersection points of strands are marked either by a plus- or a minus-sign. \textit{The relation between these two interpretations is that the signed singular braid can be thought of as the core, or the centre line, of the loops in the loop braid.} Figure \ref{fig:loopgen} illustrates this, using broken surface diagrams and diagrams of signed singular braids. Figure \ref{fig:loopgen}a) displays the generators $\sigma_i$ and $\rho_i$, and the inverse $\sigma_i^{-1}$ as motions of rings in $\mathbb{R}^3$. Figures \ref{fig:loopgen}b) and \ref{fig:loopgen}c) show the broken surface diagrams and the signed singular braid diagrams corresponding to each of these motions, respectively. 
 
\begin{remark}
It should be noted that this identification of (signed) singular braids and loop braids is not the standard in the literature (cf. \cite{celeste, fenn}). Usually the correspondence of the two types of generators is exactly swapped, i.e., intersection points of singular braids correspond to the loop braid generator $\rho_i$ and classical crossings to the loop braid generator $\sigma_i$ (cf. for example \cite{celeste}). We will see that for our purposes the interpretation of the centre lines of the loops as singular braids, which requires this deviation from the conventions in the literature, is more appropriate.
\end{remark}

\begin{figure}
\centering
\labellist
\large
\pinlabel a) at 0 700
\pinlabel $x$ at 500 280
\pinlabel $y$ at 240 650
\pinlabel $z$ at 200 150
\pinlabel $\sigma_i$ at 1100 650
\pinlabel $\sigma_i^{-1}$ at 2200 700 
\pinlabel $\rho_i$ at 3250 690
\endlabellist
\includegraphics[height=3cm]{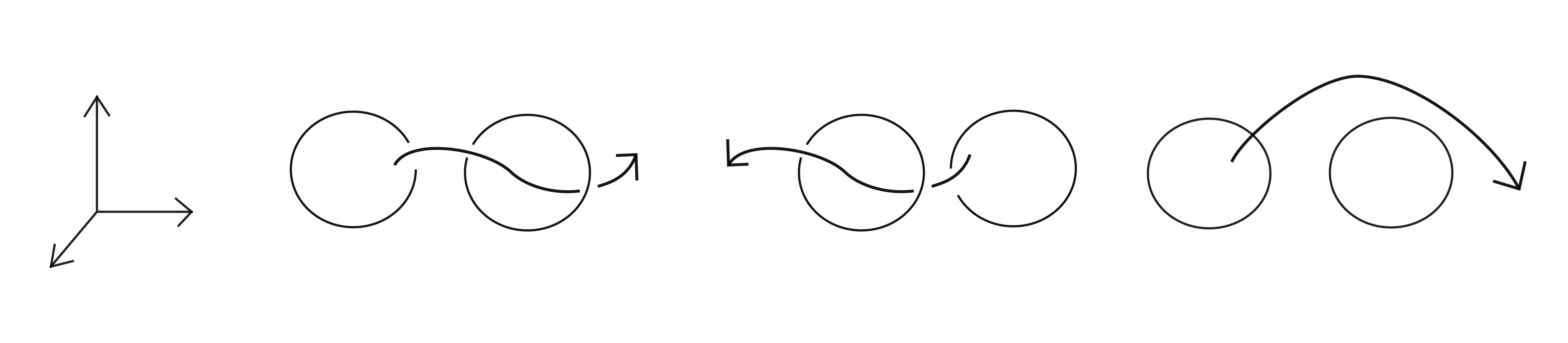}
\labellist
\large
\pinlabel b) at 0 1000
\pinlabel $x$ at 600 280
\pinlabel $t$ at 250 650
\pinlabel $y$ at 190 150
\pinlabel $\sigma_i$ at 1250 1100
\pinlabel $\sigma_i^{-1}$ at 2500 1100
\pinlabel $\rho_i$ at 3700 1100
\endlabellist
\includegraphics[height=3.4cm]{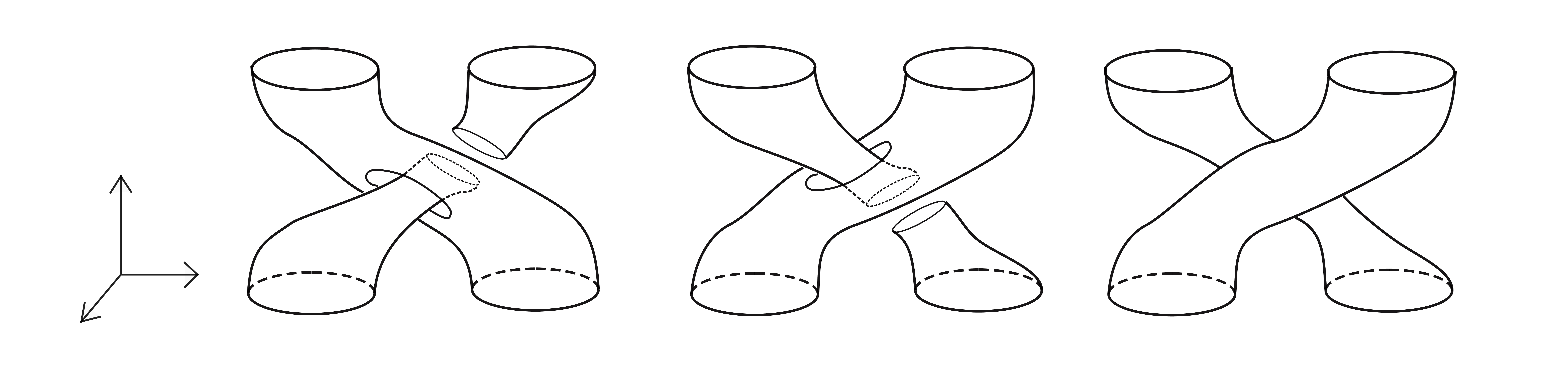}
\labellist
\large
\pinlabel c) at 0 920
\pinlabel $x$ at 400 170
\pinlabel $t$ at 150 600
\pinlabel $\sigma_i(+)$ at 1120 900
\pinlabel $\sigma_i(-)$ at 2200 900
\pinlabel $\rho_i$ at 3400 900
\endlabellist
\includegraphics[height=3.5cm]{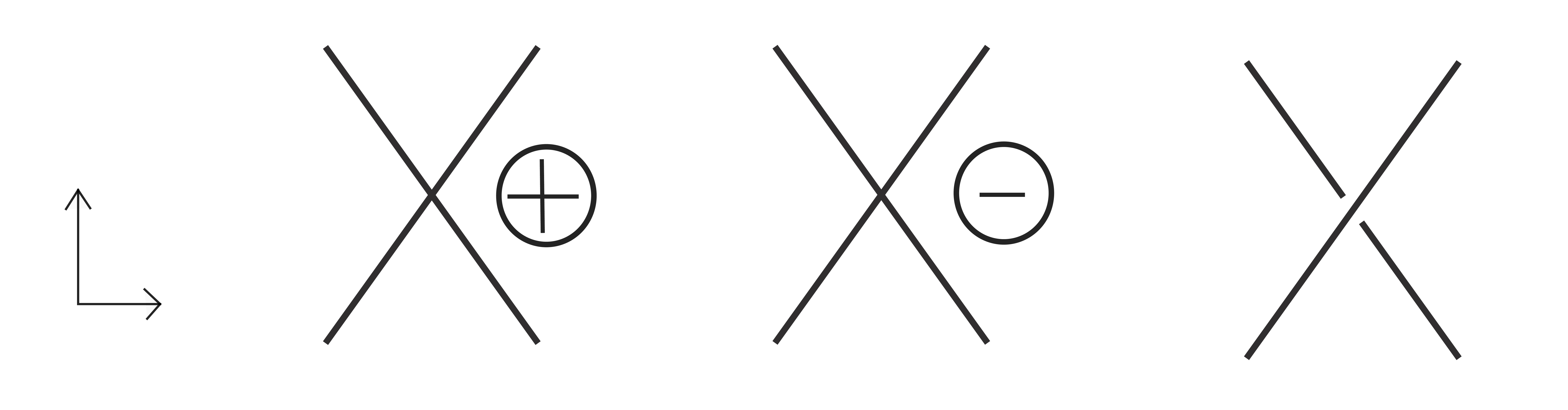}
\caption{The generators of the loop braid group and the singular braid monoid.% a) The generators $\sigma_i$ and $\rho_i$ as motions of rings. b) The generators $\sigma_i$ and $\rho_i$ as broken surface diagrams. c) Signed singular braid diagrams depicting the motions of the centres of the rings for the generators $\sigma_i$ and $\rho_i$.
\label{fig:loopgen}}
\end{figure}

We regard two signed singular braids as equivalent if the underlying singular braids are isotopic, i.e., equivalent modulo relations (\ref{eq:rel1}) through (\ref{eq:rel3}), with matching signs on the singular crossings.

\begin{remark}
\label{rem:equivloopsing}
Note that the equations (\ref{eq:rel1}) through (\ref{eq:rel2}) hold both in the loop braid group and in the singular braid monoid. Furthermore, these equations remain true in the loop braid group when the $\sigma_i$'s are replaced by their inverses. It follows that if two signed singular braids are equivalent modulo Eqs. (\ref{eq:rel1}) through (\ref{eq:rel2}), then the corresponding loop braids are isotopic.
\end{remark}

We proceed by outlining the algorithm that constructs any given loop braid as the vanishing set of a polynomial, which proves Theorem \ref{thm:intro1}. This is followed by an example, a generalisation to some elements of other motion groups and a discussion of an application to the construction of time-dependent vector fields with knotted flow lines.

\subsection{The proof of Theorem \ref{thm:intro1}}
Let $B$ be a loop braid. Without loss of generality we can assume that the given fixed plane in Definition \ref{def:loop} is the $xy$-plane. Like in Section \ref{sec:review} we label the strands $(C,j)$, where $C$ is running through the set of components of the closure of the loop braid and $j$ enumerates the strands in the component $C$, i.e., $j=1,2,\ldots,s_C$, where $s_C$ is the number of strands in $C$. Now suppose that for each strand there are functions $x_{C,j}$, $y_{C,j}$, $z_{C,j}:[0,2\pi]\to\mathbb{R}$ and $\rho_{C,j}:[0,2\pi]\to\mathbb{R}_{>0}$ that parametrise the $(x,y,z)$-coordinate of the centre line of the corresponding ring and the radius $\rho$ of the ring, respectively. That is, at the time $t$ the ring in question is the circle of radius $\rho_{C,j}(t)$, lying in the plane parallel to the $xy$-plane and centred at $(x_{C,j}(t), y_{C,j}(t),z_{C,j}(t))$. Then a point $(x,y,z,t)\in\mathbb{R}^3\times[0,2\pi]$ lies on the strand $(C,j)$ if and only if 
\begin{align}
\label{eq:p}
P_{C,j}(x,y,z,t)&\defeq(x-x_{C,j}(t))^2+(y-y_{C,j}(t))^2-\rho_{C,j}(t)^2+\rmi (z-z_{C,j}(t))\nonumber\\
&=0.
\end{align}

%$p(x,y,z,x_0,y_0,z_0,R)=(x-x_0)^2+(y-y_0)^2-R^2+\rmi (z-z_0)$
Hence for all values of $\lambda>0$ the loop braid is the vanishing set of the function
$g_{\lambda}:\mathbb{R}^3\times[0,2\pi]\to\mathbb{C}\cong\mathbb{R}^2$,
\begin{equation}
\label{eq:looppoly}
g_{\lambda}(x,y,z,t)=\prod_C \prod_{j=1}^{s_C}P_{C,j}\left(\frac{x}{\lambda},\frac{y}{\lambda},\frac{z}{\lambda},t\right).
%F_C\left(\frac{t+2\pi j}{s_C}\right),G_C\left(\frac{t+2\pi j}{s_C}\right),\nonumber\\
%&\left.H_C\left(\frac{t+2\pi j}{s_C}\right),R_C\left(\frac{t+2\pi j}{s_C}\right)\right)
\end{equation}

\begin{lemma}
If for every component $C$ there are trigonometric polynomials $F_C$, $G_C$, $H_C:[0,2\pi]\to\mathbb{R}$ and $R_C:[0,2\pi]\to\mathbb{R}_{>0}$ such that 
\begin{align}
(x_{C,j}(t),y_{C,j}(t),z_{C,j}(t),\rho_{C,j}(t))=&\left(F_C\left(\frac{t+2\pi (j-1)}{s_C}\right),G_C\left(\frac{t+2\pi (j-1)}{s_C}\right),\right.\nonumber\\
&\left.H_C\left(\frac{t+2\pi (j-1)}{s_C}\right),R_C\left(\frac{t+2\pi (j-1)}{s_C}\right)\right),
\end{align}
then expanding the product in Eq. (\ref{eq:looppoly}) yields a polynomial in $x$, $y$, $z$, $\rme^{\rmi t}$ and $\rme^{-\rmi t}$. 
\end{lemma}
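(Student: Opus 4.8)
The plan is to reduce everything to a single cyclic-symmetry argument, exactly as in the classical case treated in Section~\ref{sec:review}. Since $g_\lambda$ is the product over components $C$ of the per-component factors $\prod_{j=1}^{s_C} P_{C,j}(x/\lambda,y/\lambda,z/\lambda,t)$, and since a product of polynomials in $x,y,z,\rme^{\rmi t},\rme^{-\rmi t}$ is again such a polynomial, it suffices to prove the claim for one fixed component $C$.

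First I would record that each factor $P_{C,j}$ is a polynomial in $x,y,z$ whose coefficients are, by hypothesis, values of the trigonometric polynomials $F_C,G_C,H_C,R_C$ at the shifted argument $(t+2\pi(j-1))/s_C$. Since a trigonometric polynomial is a Laurent polynomial in $\rme^{\rmi\theta}$, setting $w\defeq\rme^{\rmi t/s_C}$ and $\zeta\defeq\rme^{2\pi\rmi/s_C}$ lets me write $\rme^{\rmi(t+2\pi(j-1))/s_C}=w\zeta^{j-1}$. Thus there is a single expression $\Phi(x,y,z,W)$, polynomial in $x,y,z$ and Laurent-polynomial in $W$ (squaring $R_C$ keeps it Laurent), such that $P_{C,j}(x/\lambda,y/\lambda,z/\lambda,t)=\Phi(x/\lambda,y/\lambda,z/\lambda,\,w\zeta^{j-1})$ for every $j$. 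The per-component factor is then $\prod_{j=1}^{s_C}\Phi(\cdot\,,w\zeta^{j-1})$.

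The crux is the following symmetry. Substituting $w\mapsto w\zeta$ sends the factor indexed by $j$ to the one indexed by $j+1$, and because $\zeta^{s_C}=1$ the index $s_C$ wraps back to $0$; hence the whole product is merely reordered and is invariant under $w\mapsto w\zeta$. Writing the product as a Laurent polynomial $\sum_k c_k(x,y,z)\,w^k$ in $w$, with coefficients polynomial in $x,y,z$, invariance forces $c_k(1-\zeta^k)=0$, so $c_k=0$ unless $s_C\mid k$. Therefore only powers $w^{s_C m}=(\rme^{\rmi t})^m$ survive, and the per-component factor is a polynomial in $x,y,z,\rme^{\rmi t},\rme^{-\rmi t}$. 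Reassembling the product over all components finishes the argument; the scaling by $\lambda^{-1}$ is harmless since it only rescales $x,y,z$.

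The main obstacle to watch is precisely that the individual factors $P_{C,j}$ are \emph{not} themselves polynomials in $\rme^{\pm\rmi t}$---they involve the fractional frequency $\rme^{\rmi t/s_C}$---so one cannot argue factor by factor. Polynomiality in $\rme^{\pm\rmi t}$ is a collective feature of the product over $j$, produced by the cyclic $\mathbb{Z}/s_C$ symmetry $w\mapsto w\zeta$. Beyond this, I would only need to confirm the bookkeeping that $\Phi$ is genuinely a single $j$-independent expression, which follows directly from the hypothesis relating $(x_{C,j},y_{C,j},z_{C,j},\rho_{C,j})$ to the four component-level trigonometric polynomials.
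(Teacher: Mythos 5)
Your argument is correct, and it is essentially the same mechanism the paper relies on: the paper's proof is a one-line appeal to the classical case of \cite{bode:2016polynomial}, which rests on ``induction and basic arithmetic of roots of unity,'' i.e., exactly the fact that $\zeta^k=1$ only when $s_C\mid k$. You package that arithmetic as a one-shot $\mathbb{Z}/s_C$-invariance of the formal Laurent polynomial $Q(W)=\prod_j\Phi(\cdot,W\zeta^{j-1})$ rather than as an induction on the factors, which makes the proof self-contained and arguably cleaner; the only point worth stating explicitly is that $Q(W\zeta)=Q(W)$ is an identity of Laurent polynomials in the indeterminate $W$ (a pure reordering of factors), so the conclusion $c_k=0$ for $s_C\nmid k$ does not depend on $w=\rme^{\rmi t/s_C}$ sweeping out the full unit circle as $t$ ranges over $[0,2\pi]$.
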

\begin{proof}
The proof is completely identical to the classical braid case in \cite{bode:2016polynomial} using only induction and basic arithmetic of roots of unity.
\end{proof}

%Vanishing set is $B$

%expanding the product gives polynomial in $u$, $\rme^{\rmi t}$ and $\rme^{-\rmi t}$. 
As in \cite{bode:2016polynomial} we replace every instance of $\rme^{\rmi t}$ in the polynomial expression of $g_{\lambda}$ by a complex variable $v$ and $\rme^{-\rmi t}$ by its complex conjugate $\overline{v}$. We call the resulting function $f_{\lambda}:\mathbb{R}^5\cong\mathbb{R}^3\times\mathbb{C}\to\mathbb{C}\cong\mathbb{R}^2$. Note that $f_{\lambda}(x,y,z,\cos(t),\sin(t))=g_{\lambda}(x,y,z,t)$.

It is convenient to consider $f_{\lambda}$ as a family of polynomials in three real variables $x$, $y$ and $z$, parametrised by $v=r\rme^{\rmi t}$. We often write $f_{\lambda}(x,y,z,v)$ instead of $f_{\lambda}(x,y,z,\text{Re}(v),\text{Im}(v))$.

\begin{lemma}
\label{lem:implicit}
There is a $\delta\in(0,1)$ such that for all $r\in[1-\delta,1]$ the vanishing set of $f_{\lambda}(\cdot,\cdot,\cdot,r\rme^{\rmi t})$, $t\in[0,2\pi]$ contains the desired braid $B$, i.e., $B\subset f_\lambda^{-1}(0)\cap(\mathbb{R}^3\times rS^1)$.
\end{lemma}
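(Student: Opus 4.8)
The plan is to view $r$ as a perturbation parameter and to show that the transverse zero set at $r=1$ persists for $r$ slightly smaller. At $r=1$ we have $f_\lambda(x,y,z,\rme^{\rmi t})=g_\lambda(x,y,z,t)$, and since $g_\lambda$ is the product of the factors $P_{C,j}$, its vanishing set over $t\in[0,2\pi]$ is exactly the (scaled) braid $B$. The whole argument rests on the fact that the points of $B$ are regular points of $f_\lambda$, so that the implicit function theorem, or equivalently Ehresmann's fibration theorem, produces a nearby copy of $B$ on each shell $\{|v|=r\}$ with $r$ close to $1$.

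First I would establish transversality along $B$. Fix $t$ and a point on the ring $(C,j)$, so that $z=z_{C,j}(t)$ and $(x-x_{C,j}(t))^2+(y-y_{C,j}(t))^2=\rho_{C,j}(t)^2$. Splitting $P_{C,j}$ into real and imaginary parts, the real part has gradient $(2(x-x_{C,j}),2(y-y_{C,j}),0)$, which is nonzero because $\rho_{C,j}>0$, while the imaginary part has gradient $(0,0,1)$; these are linearly independent, so the Jacobian of $(x,y,z)\mapsto P_{C,j}$ has rank $2$ on the ring. As the rings are pairwise disjoint, the remaining factors $P_{C',j'}$ are nonzero there, and by the product rule the Jacobian of $(x,y,z)\mapsto g_\lambda$ is a nonzero complex multiple of that of $P_{C,j}$; multiplication by a nonzero complex number is an invertible map of $\mathbb{C}\cong\mathbb{R}^2$, so this Jacobian also has rank $2$. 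Hence $0$ is a regular value of $(x,y,z)\mapsto f_\lambda(x,y,z,v)$ at every point of $B$, and by continuity of $f_\lambda$ in all variables the partial Jacobian $\partial f_\lambda/\partial(x,y,z)$ keeps rank $2$ on a neighbourhood of $B$ in $\mathbb{R}^3\times\mathbb{C}$.

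Next I would convert this into the persistence statement. Because $\partial f_\lambda/\partial(x,y,z)$ already has rank $2$ near $B$, the full differential $Df_\lambda$ has rank $2$ there, so $M\defeq f_\lambda^{-1}(0)$ is a smooth $3$-manifold in a neighbourhood of $B$. The same rank-$2$ condition shows that for any radial direction $w$ in the $v$-plane one can solve $\partial f_\lambda/\partial(x,y,z)\,(a,b,c)=-\partial f_\lambda/\partial v\,(w)$ for $(a,b,c)$, producing a tangent vector of $M$ with nonzero radial $v$-component; thus $M$ is transverse to every shell $\{|v|=r\}$ along $B$. Consequently $|v|^2\colon M\to\mathbb{R}$ is a submersion near $B$, and $B=(|v|^2|_M)^{-1}(1)$ is a compact regular fibre. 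Ehresmann's theorem then yields a $\delta\in(0,1)$ and a trivialisation over $[(1-\delta)^2,1]$, so that for each $r\in[1-\delta,1]$ the fibre $B_r\subset M\cap\{|v|=r\}=f_\lambda^{-1}(0)\cap(\mathbb{R}^3\times rS^1)$ is diffeomorphic to, and ambient isotopic to, $B$. The full intersection on a shell may acquire extra components away from $B$, but these do not affect the claimed containment.

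The main obstacle is bookkeeping rather than a single hard estimate: one must make the rank-$2$ condition, and hence the tubular neighbourhood of $B$ on which $M$ is smooth and transverse to the shells, uniform in $t\in[0,2\pi]$, so that one $\delta$ works simultaneously for all $t$ and no new zeros of $f_\lambda$ collide with $B_r$ as $r$ decreases. This uniformity follows from the compactness of $B$ together with the smooth dependence of $f_\lambda$ on $v=r\rme^{\rmi t}$.
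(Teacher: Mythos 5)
Your argument is correct, and it rests on the same underlying idea as the paper's proof (the points of $B$ are regular points of $(x,y,z)\mapsto f_\lambda$, so the zero set persists on nearby shells $\{|v|=r\}$), but the technical implementation differs. The paper augments $f_1$ to a map $\tilde f_{C,j}$ with three-dimensional target by adjoining an angular coordinate $\phi$ along each ring, and then applies the \emph{analytic} implicit function theorem to solve $\tilde f_{C,j}=(0,0,\phi)$; this yields an explicit real-analytic parametrisation of the perturbed rings by $(r,t,\phi)$, which is precisely the data (the curves $h_{C,j,t,\phi}(r)$ and the Rolle-type argument) that the subsequent Lemma on choosing $\lambda$ small reuses. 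You instead observe that $M=f_\lambda^{-1}(0)$ is a smooth $3$-manifold near $B$ transverse to the shells, so that $|v|^2|_M$ is a submersion with compact regular fibre $B$, and invoke Ehresmann; this is cleaner for the lemma in isolation and has the merit of actually verifying the rank-$2$ condition (via the gradients of $\operatorname{Re}P_{C,j}$ and $\operatorname{Im}P_{C,j}$) that the paper dismisses as ``easy to see,'' but it delivers only smooth dependence on $r$ and no explicit parametrisation, so if one followed your route the next lemma would need to be rephrased. Two small points to tidy up: Ehresmann requires properness, so you should restrict to a compact neighbourhood of $B$ and flow for a short time, which is the content of your final paragraph and is fine; and the paper additionally records that $\delta$ can be taken independent of $\lambda$ (immediate from $f_\lambda(x,y,z,v)=f_1(x/\lambda,y/\lambda,z/\lambda,v)$), a fact that is not part of the statement but is quietly used when $\lambda$ is chosen afterwards, so it is worth adding a sentence to that effect.
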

\begin{proof}
Let $(x,y,z)$ be such that $f_1(x,y,z,\rme^{\rmi t})=0$ for some $t\in[0,2\pi]$. Then it belongs to some strand $(C,j)$. We can turn $f_{1}$ into a function with 3-dimensional image by considering the map 
\begin{align}
\tilde{f}_{C,j}:U\to&\mathbb{R}^2\times S^1;\nonumber\\
(x,y,z,v)\mapsto &\left(\text{Re}(f_{1}(x,y,z,v)),\text{Im}(f_1(x,y,z,v))\right.,\nonumber\\
&\left.\arg\left(x+\rmi y-F_{C}\left(\frac{t+2\pi (j-1)}{s_C}\right)-\rmi G_{C}\left(\frac{t+2\pi (j-1)}{s_C}\right)\right)\right),
\end{align}
where $U$ is an open subset of $\mathbb{R}^3\times\mathbb{C}$ containing $f_1^{-1}(0)\cap (\mathbb{R}^3\times S^1)$, i.e. a neighbourhood of the loop braid. Note that the third component that we added to $f$ can be interpreted as a circular coordinate along the ring that is part of the strand $(C,j)$.

It is easy to see that for all $(C,j)$ the gradient matrix of $\tilde{f}_{C,j}$ with respect to $x$, $y$ and $z$ is invertible for all $(x,y,z,\rme^{\rmi t_*})\in\mathbb{R}^3\times S^1$ with $f_1(x,y,z,\rme^{\rmi t_*})=0$. Furthermore, the function is analytic with respect to all variables. The analytic implicit function theorem then implies that there are neighbourhoods $V_{C,j,\phi,t_*}$ of $(1,\rme^{\rmi t_*})\in\mathbb{R}^{+}\times S^1$ and $W_{C,j,\phi,t_*}$ of $(x,y,z)$ such that for every $\phi\in S^1$ the solutions to $\tilde{f}_{C,j}=(0,0,\phi)$ are real analytic functions of $r$ and $t$. Since the rings in $f_1^{-1}(0)\cap\mathbb{R}^3\times S^1$ are disjoint, we can by continuity assume that they are also disjoint in $\bigcup_{C,j,\phi,t_*}\{(w,r\rme^{\rmi t_*})|w\in W_{C,j,\phi,t_*}\}$ for all $r\in[1-\delta,1]$ for some $\delta\in(0,1)$. Note that varying $r$ between $1-\delta$ and $1$ induces a smooth isotopy between the zeros of $f_{1}(x,y,z,r\rme^{\rmi t})$ in $\bigcup_{C,j,\phi,t_*}\{(w,r\rme^{\rmi t_*})|w\in W_{C,j,\phi,t_*}\}\subset \mathbb{R}^3\times rS^1$ and the zeros of $f_{1}(x,y,z,\rme^{\rmi t})$, since 0 is a regular value throughout.  

The parameter $\lambda$ scales $B$ in the $x$-, $y$- and $z$-direction and in particular does not change isotopy types. Moreover, the value of $\delta$ does not depend on $\lambda$.\end{proof}

%, whose positions depend analytically on $|v|$ and $t=\arg(v)$. This follows from the submersion theorem and the implicit function theorem, since $0$ is a regular value of $f_{\lambda}(\cdot,\cdot,\cdot,\rme^{\rmi t}):\mathbb{R}^3\times S^1\to\mathbb{C}$.

%For this we would like to use the analytic implicit function theorem, which means that we have to check that the determinant of a certain matrix of derivatives does not vanish.

%Note that this property is independent of shifts in the $x$- and $y$-coordinate, i.e., it is true for $f_{\lambda}(x,y,z,v)$ if and only if it is true for $f_{\lambda}(x+a,y+b,z,v)$ for all $a,b\in\mathbb{R}$. In particular, we can work with $f_{\lambda}(x+x_{C,j}(t),y+y_{C,j}(t),z,|v|\rme^{\rmi t})$ and show the property for each strand $(C,j)$ independently. The advantage of this is that the factor in $f_{\lambda}(x+x_{C,j}(t),y+y_{C,j}(t),z,|v|\rme^{\rmi t})$ corresponding to the ring $(C,j)$ at time $t$ does not depend on $\arg(x+\rmi y)$. 

%Note that for all $v$ with $|v|=1$, 0 is a regular value of $f_{\lambda}(\cdot,\cdot,\cdot,v)$. Hence there is a $\delta$ such that for all $v$ with $|v|\in[1-\delta,1]$, $0$ is a regular value of $f_{\lambda}(\cdot,\cdot,\cdot,v)$. This means in particular that the vanishing set of $f_{\lambda}(\cdot,\cdot,\cdot,v)$ still consists of $n$ disjoint circles.
\begin{lemma}
\label{lem:smalllambda}
For small enough $\lambda>0$ the closure of $B$ is contained in $f_{\lambda}^{-1}(0)\cap S^4$.
\end{lemma}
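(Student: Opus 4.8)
The plan is to exploit the fact that $\lambda$ enters $f_\lambda$ purely as a dilation of the first three coordinates, to combine this with the $r$-family of braids already produced in Lemma~\ref{lem:implicit}, and then to cut this family with $S^4$ by an intermediate-value argument in the radius $r=|v|$.

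First I would record the scaling identity $f_\lambda(x,y,z,v)=f_1(x/\lambda,y/\lambda,z/\lambda,v)$, which is immediate from Eq.~(\ref{eq:looppoly}) since $\lambda$ only rescales $x,y,z$ and leaves $v=\rme^{\rmi t}$ untouched. Consequently the zero set of $f_\lambda$ is the image of the zero set of $f_1$ under $(X,Y,Z,v)\mapsto(\lambda X,\lambda Y,\lambda Z,v)$. In particular, after a harmless translation of the braid, realised by adjusting the constant Fourier coefficients of $F_C,G_C,H_C$ so that the unscaled centre lines and rings avoid the origin of $\mathbb{R}^3$, the braid part of $f_\lambda^{-1}(0)$ satisfies $c\lambda^2\le x^2+y^2+z^2\le C^2\lambda^2$ for constants $0<c\le C$ independent of $\lambda$.

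Next I would feed this into Lemma~\ref{lem:implicit}. For each $r\in[1-\delta,1]$ that lemma gives a copy of the braid inside $f_\lambda^{-1}(0)\cap(\mathbb{R}^3\times rS^1)$, and its proof shows these copies are carried into one another by a smooth isotopy as $r$ varies; together they sweep out a disjoint union $N$ of thickened tori $T^2\times[1-\delta,1]$ lying in the shell $\{c\lambda^2\le x^2+y^2+z^2\le C^2\lambda^2\}$. On $N$ consider $\rho^2\defeq x^2+y^2+z^2+|v|^2$, so that $S^4=\{\rho^2=1\}$. Following a fixed braid point along the isotopy as $r$ decreases from $1$ to $1-\delta$, one has $\rho^2=x^2+y^2+z^2+1\ge 1+c\lambda^2>1$ at $r=1$, while $\rho^2\le C^2\lambda^2+(1-\delta)^2<1$ at $r=1-\delta$ as soon as $\lambda$ is small enough that $C^2\lambda^2<2\delta-\delta^2$. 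The intermediate value theorem then produces a radius $r^*\in(1-\delta,1)$ at which that point lies on $S^4\cap f_\lambda^{-1}(0)$.

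Finally I would upgrade this pointwise statement to an embedded copy of the closure of $B$. Because the zeros of $f_\lambda$ are $\lambda$-dilations of those of $f_1$, the $r$-derivatives of $x,y,z$ along the isotopy are $O(\lambda)$, whence $\tfrac{\partial}{\partial r}\rho^2=2r+O(\lambda^2)>0$ for small $\lambda$; thus $r^*$ is unique and, by the implicit function theorem, depends smoothly on the braid point. The assignment of each point of the $r=1$ copy to its partner at $r=r^*$ is therefore a smooth injection of the tori into $S^4\cap N$, hence a diffeomorphism onto its image, so that $f_\lambda^{-1}(0)\cap S^4$ contains a copy of the closure of $B$; pushing along $r$ within $N$ exhibits this copy as isotopic to the $r=1$ copy, i.e.\ to the closure of $B$. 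I expect the main obstacle to be precisely this passage from the $r$-family to the $S^4$-slice, that is, controlling the signs of $\rho^2-1$ at the two endpoints and the monotonicity of $\rho^2$ in $r$ uniformly in $\lambda$; everything else is bookkeeping inherited from the scaling identity and Lemma~\ref{lem:implicit}.
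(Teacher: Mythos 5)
Your argument is correct and follows essentially the same route as the paper: exploit the scaling identity to push the $r=1-\delta$ slice inside the unit ball while the $r=1$ slice stays outside, run the intermediate value theorem along the $r$-family of Lemma~\ref{lem:implicit} to hit $S^4$, and then upgrade to an embedded, isotopic copy of the closure of $B$. The only cosmetic difference is that you obtain uniqueness of the intersection radius from the monotonicity $\tfrac{\partial}{\partial r}\rho^2=2r+O(\lambda^2)>0$, whereas the paper invokes a Rolle's-theorem argument borrowed from \cite{bode:2016polynomial}; these are interchangeable.
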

\begin{proof}
Let $M(\lambda,r)=\max\{|(x,y,z)|:f_{\lambda}(x,y,z,r\rme^{\rmi t})=0\}$. Then $M(\lambda,r)=\lambda M(1,r)$. Thus if we choose $\lambda$ small enough we have $M(\lambda,1-\delta)<\sqrt{\delta(2-\delta)}$, which is equivalent to the vanishing set $f_\lambda^{-1}(0)\cap(\mathbb{R}^3\times (1-\delta)S^1)$ being contained in the open unit 5-ball. Since the vanishing set of $f_{\lambda}(\cdot,\cdot,\cdot,\rme^{\rmi t})$ is outside the open unit 5-ball, each ring must pass through the unit 4-sphere.

By Lemma \ref{lem:implicit} we can parametrise each component of the vanishing set of $f_{\lambda}$ for $r\in[1-\delta,1]$ by $r$, $t=\arg(v)$ and $\phi$, where $\phi$ corresponds to the variable along a ring. As mentioned before this parametrisation is real analytic, which means that the same arguments as in \cite{bode:2016polynomial} apply. That is for each $(C,j)$, each $t$ and each $\phi$ there is a real analytic function $h_{C,j,t,\phi}:[1-\delta,1]\to\mathbb{R}^5$, whose image consists of the points on the vanishing set of $f_{\lambda}$ that belong to the $j$th strand of the component $C$ with parameters $t$ and $\phi$. Furthermore, for each $C$, $j$, $t$ and $\phi$ this image of $h_{C,j,t,\phi}$ intersects $S^4$ in a unique point if $\lambda$ is small enough. The proof of this uses the same arguments as in \cite{bode:2016polynomial}, i.e., applying Rolle's Theorem to a continuously differentiable function that is constructed from $h_{C,j,t,\phi}$ and whose zeros are intersection points of the image of $h_{C,j,t,\phi}$ and $S^4$. We call this unique intersection point $Z(C,j,t,\phi)$ and the value of $|v|=r$ for which the intersection occurs $z(C,j,t,\phi)$, i.e. $h_{C,j,t,\phi}(z(C,j,t,\phi))=Z(C,j,t,\phi)$.% The proof of this uses only that each $h_{C,j,t,\phi}$ is continuously differentiable and Rolle's theorem.

%Vanishing set is bounded, for small enough $\lambda>0$ all intersections $f_{\lambda}^{-1}(0)\cap S^4$ occur with $|v|\in[1-\delta,1]$. 

Let $\Psi:\mathbb{B}^3\times S^1\to S^4\subset\mathbb{R}^3\times\mathbb{C}$ be the projection map 
\begin{equation}
(x,y,z,t)\mapsto(x,y,z,\sqrt{1-x^2-y^2-z^2}\rme^{\rmi t}),
\end{equation}
where $t\in[0,2\pi]/(0\sim 2\pi)\cong S^1$.
We can now define a smooth isotopy from $\Psi(f_{\lambda}^{-1}(0)\cap \mathbb{R}^3\times S^1)$, which we know to be the closure of the desired loop braid $B$, to $\bigcup_{C,j,t,\phi}Z(C,j,t,\phi)$, which proves that 
\begin{equation}
\bigcup_{C,j,t,\phi}Z(C,j,t,\phi)
\end{equation} 
is ambient isotopic to the closure of $B$ in $S^4$ by the isotopy extension theorem. Since by construction $\bigcup_{C,j,t,\phi}Z(C,j,t,\phi)$ is contained in $f_{\lambda}^{-1}(0)\cap S^4$, this finishes the proof of the lemma.

The isotopy is simply 
\begin{equation}
(\Psi(h_{C,j,t,\phi}(1)),s)\mapsto \Psi(h_{C,j,t,\phi}(\max\{1-s,z(C,j,t,\phi)\}))
\end{equation}
for $s\in[0,1]$.
%projection map $\Psi:\mathbb{R}^3\times S^1\to S^4$. get smooth isotopy between $M$ and $\Psi(f_{\lambda}^{-1}(0)\cap(\mathbb{R}^3\times S^1))$. extends to ambient isotopy.
\end{proof}

In theory, the proof provides an outline of how small $\lambda$ has to be chosen in order to guarantee the desired result. We have to calculate the first derivatives of $\tilde{f}_{C,j}$ for all $C$ and $j$ and find the largest value of $r<1$ where $\tilde{f}_{C,j}=(0,\phi)$ and $\det\nabla\tilde{f}_{C,j}=0$ for some $C$, $j$ and $\phi\in S^1$. This leads to the value of $\delta<1-r$. The implicit function theorem gives descriptions of the derivatives of the function $h_{C,j,t,\phi}$. By solving the corresponding system of differential equations we obtain an expression of $h_{C,j,t,\phi}(r)$ for $r\in(1-\delta,1]$, which allows us to find $M=\max\{|(x,y,z)|:h_{C,j,t,\phi}(r)=(x,y,z)\text{ for some }C,j, t, \phi, \text{ and }r\in(1-\delta,1]\}$. This together with knowledge of the derivatives of $h_{C,j,t,\phi}$ is enough to find sufficient values for $\lambda$.

In practice, and in particular for complicated braids, this is computationally too expensive. Instead we usually make an educated guess for $\lambda$, simply plot the vanishing set of $f_{\lambda}^{-1}(0)\cap S^4\cap\{t=\text{constant}\}$ for sufficiently many values of $t$ and check if it leads to the desired loop braid. This method is not perfect, but quite practical.

%, but like in \cite{bode:2016polynomial} the coefficients of $f_{\lambda}$ can be used to compute such a value for $\lambda$. However, this value tends to be a lot smaller than necessary. Hence an educated estimate on the right value for $\lambda$ usually yields better results.

%A key difference between Lemma \ref{lem:smalllambda} and the lower-dimensional analogue in \cite{bode:2016polynomial} is that the lack of . 
Compared to the lower-dimensional case in \cite{bode:2016polynomial} the real variables $x$, $y$ and $z$ take the role of the complex variable $u$. As a consequence the polynomials obtained from the outlined procedure lack the semiholomorphicity of the polynomials in \cite{bode:2016polynomial}. While in the lower-dimensional case this guarantees that we always know the number of zeros of the polynomial for a given parameter $v$ (namely simply the degree with respect to the complex variable), we do not have the analogous knowledge about the number of components in this setting. Therefore, the vanishing set of $f_{\lambda}$ on $S^4$ might contain other components that we have no control over. All that we can guarantee is that the closure of $B$ is a subset of the vanishing set on $S^4$.

The previous lemmas show that if we can find a parametrisation of a loop braid $B$ in terms of trigonometric polynomials, then the following algorithm produces polynomials of the desired form.

\noindent
\textbf{Algorithm 1:}
\begin{steps}
\item From the given braid word find the trigonometric polynomials $F_C$.
\item Find the trigonometric polynomials $G_C$. 
\item Find the trigonometric polynomials $H_C$.
\item Find the trigonometric polynomials $R_C$.
\item Define $g_{\lambda}$, write it as a polynomial by expanding the product and define $f_{\lambda}$.
\item Determine how small $\lambda$ has to be chosen.
\end{steps}

Like in the classical case the only computational step that requires a more detailed explanation is the trigonometric interpolation that leads to the functions $F_C$, $G_C$, $H_C$ (the parametrisations of the $x$-, $y$- and $z$-coordinates of the strands) and $R_C$ (the radius of the loops).

\begin{lemma}
We can find a parametrisation of any loop braid $B\in LB_n$ in terms of trigonometric polynomials $F_C$, $G_C$, $H_C$ and $R_C$ via trigonometric interpolation. 
\end{lemma}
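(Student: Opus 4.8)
The plan is to mirror the classical Algorithm~0 as closely as possible, treating the four interpolations $F_C,G_C,H_C,R_C$ as controlling, respectively, the $x$-, $y$- and $z$-coordinates of the centre lines and the radii of the rings. The guiding principle is the identification, emphasised in the previous subsection, of the loop braid $B$ with its centre-line signed singular braid: the classical crossings $\rho_i^{\pm1}$ are to be realised exactly as in \cite{bode:2016polynomial} (rings staying in the $xy$-plane with constant radius, the over/under datum recorded by $G_C$), whereas the singular crossings $\sigma_i(\pm)$ are to be realised as genuine transverse intersections of the centre lines at which one ring threads through the other. Thus $F_C$ and $G_C$ carry over almost verbatim from the classical construction, and the new content lies in the role of $H_C$ and $R_C$ at the singular crossings.

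First I would run Step~1 exactly as in Section~\ref{sec:review}: forget all crossing data in the centre-line diagram of $B$, interpret it as a union of graphs of piecewise linear functions, assemble one function $D_C$ per component, and trigonometrically interpolate to obtain $F_C$, the parametrisation of the $x$-coordinates. As in the classical case this reproduces the correct permutation but may create additional unwanted crossings, which occur in cancelling pairs. For Step~2 I would interpolate $G_C$ with the same bookkeeping as in \cite{bode:2016polynomial}: at each classical crossing $\rho_i^{\pm1}$ and at each unwanted crossing I assign distinct $y$-values, choosing the signs so that the $\rho$-crossings obtain the prescribed over/under behaviour and the unwanted crossings cancel in pairs; the one change is that at a singular crossing $\sigma_i(\pm)$ I assign the \emph{same} $y$-value to both strands, forcing the two centre lines to meet in the $(x,y)$-projection precisely at that $t$. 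Step~3 then interpolates $H_C$ with $z\equiv 0$ as the default, modified only near the singular crossings: there I let the threading strand's $z$-coordinate pass through $0$ (with the sign of the slope dictated by the label $\pm$) while the other strand stays at $z=0$, so that all three coordinates agree at the crossing time and the intersection of the centre lines is transverse. Finally, Step~4 interpolates $R_C$ from data equal to a fixed default radius away from the singular crossings and shrunk near them, so that the threading ring is small enough to pass through the disk bounded by the other ring, with the direction of threading matching the sign $\pm$.

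The main obstacle is Step~4 together with the coordination of Steps~2--4 at the singular crossings. Three things must be checked simultaneously there: that the prescribed data for $G_C$ and $H_C$ really do make the two centre lines meet transversally (and not merely tangentially), that the chosen radius behaviour realises the correct loop braid generator $\sigma_i$ rather than $\sigma_i^{-1}$ (and not some accidental linking), and that the interpolant $R_C$ remains strictly positive despite the required oscillation between the default and shrunk radii. Positivity can be secured by keeping the variation of the radius data controlled relative to the default, or equivalently by enlarging the non-threading ring instead of shrinking the threading one; and the correctness of the local threading model is a finite, local verification analogous to the over/under check in the classical case. One must also confirm, as in the classical argument and in Lemma~\ref{lem:implicit}, that away from the designated crossings the rings stay pairwise disjoint and unlinked, which holds generically and can be enforced by taking the default radius and the $z$-excursions small enough. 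Once these local models are in place, the global conclusion --- that the resulting trigonometric parametrisation $(F_C,G_C,H_C,R_C)$ describes a loop braid isotopic to $B$ --- follows because the centre-line singular braid then has exactly the prescribed crossing pattern and signs, and by Remark~\ref{rem:equivloopsing} this determines the loop braid up to isotopy.
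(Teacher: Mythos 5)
Your proposal follows essentially the same route as the paper's proof: classical interpolation for $F_C$ and $G_C$ with coincident $y$-data at the $\sigma$-crossings, Hermite-type data (values and derivative signs) for $H_C$ to encode the threading direction, radius data only at the $\sigma$-crossings with a positivity/scaling adjustment, and the reduction to the centre-line signed singular braid via Remark \ref{rem:equivloopsing}. The only notable omissions are details the paper makes explicit --- that only the \emph{first} crossing of the interpolated diagram between the two relevant strands in each interval receives the coincident $y$-value, that the $H_C$ interpolation with prescribed derivatives is guaranteed by an explicit formula of Nathan \cite{nathan}, and that the final identification of crossing patterns requires an interval-by-interval isotopy rather than literal equality --- but these do not change the argument.
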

\begin{proof}
From a broken surface diagram or equivalently a word in the generators $\sigma_i$ and $\rho_i$, $i=1,2,\ldots n-1$, and their inverses we obtain a diagram of a signed singular braid by using the rule in Figure \ref{fig:loopgen} and using that in the loop braid group $\rho_i^{-1}=\rho_i$. We refer to crossings in this signed singular braid diagram that correspond to generators $\sigma_i$ of the loop braid group (or their inverses) as $\sigma$\textit{-crossings} and to crossings that correspond to generators $\rho_i$ (or their inverses) as $\rho$\textit{-crossings} or \textit{classical crossings}. The term crossing may refer to either a $\sigma$- or a $\rho$-crossing

From the signed singular braid diagram we can extract data points for a trigonometric interpolation for $F_C$ exactly as in Section \ref{sec:review}. This means the values of the data points correspond to the positions of each strand before and after every crossing. The union of the graphs of the functions $F_C\left(\frac{t+2\pi (j-1)}{s_C}\right)$ form a braid diagram $B'$ without specified signs of crossings. Note that like in the classical case the crossing pattern of this braid might be different from that of the desired braid (cf. Remark \ref{rem:case}).

The paragraph after Remark \ref{rem:case} outlines how the data points for the trigonometric interpolation for the $y$-coordinate have to be chosen in the 3-dimensional case, so that the interpolating trigonometric polynomial gives us a parametrisation of a braid that is isotopic to the desired braid. The 4-dimensional case only differs from this in that there is a new type of crossings: the $\sigma$-crossing.

If in $B'$ a crossing occurs between the strands $(C_i,j)$ and $(C_{i'},j')$ at $t=t_*$, i.e., if $F_{C_i}\left(\tfrac{t_*+2\pi (j-1)}{s_i}\right)=F_{C_{i'}}\left(\tfrac{t_*+2\pi (j'-1)}{s_{i'}}\right)$, then we add the two points $\left(\tfrac{t_*+2\pi (j-1)}{s_i},y\right)$ and $\left(\tfrac{t_*+2\pi (j'-1)}{s_{i'}},y'\right)$ to our list of data points for $C_i$ and $C_{i'}$ respectively, where the data values $y$ and $y'$ are determined as follows. We assign to every strand $(C_i,j)$ for each interval $[\tfrac{2\pi k}{\ell},\tfrac{2\pi (k+1)}{\ell})$, $k=0,1,2,\ldots,\ell-1$, a real number $y_{i,j,k}$ such that they are pairwise distinct and $y_{i,j,k}>y_{i',j',k}$ if the crossing of the desired loop braid $B$ in the interval $[\tfrac{2\pi k}{\ell},\tfrac{2\pi (k+1)}{\ell})$ is a $\rho$-crossing, where the strand $(C_i,j)$ is passing over the strand $(C_{i'},j')$. If the crossing of the desired loop braid $B$ in the interval $[\tfrac{2\pi k}{\ell},\tfrac{2\pi (k+1)}{\ell})$ is a $\sigma$-crossing, there should be no $y_{\cdot,\cdot,k}$ between the two values assigned to the two strands that are involved in this crossing.

Then the values $y_{i,j,k}$ are chosen to be the values for all data points for the strand $(C_i,j)$ in the interval $[\tfrac{2\pi k}{\ell},\tfrac{2\pi (k+1)}{\ell})$ except for one case. If the desired braid $B$ has a $\sigma$-crossing in the interval $[\tfrac{2\pi k}{\ell},\tfrac{2\pi (k+1)}{\ell})$ between the strands $(C_i,j)$ and $(C_{i'},j')$, then both of the data points that come from the first crossing of $B'$ between $(C_i,j)$ and $(C_{i'},j')$ in the interval $[\tfrac{2\pi k}{\ell},\tfrac{2\pi (k+1)}{\ell})$ obtain the same value $y$, which should lie between $y_{i,j,k}$ and $y_{i',j',k}$. All data points coming from other crossings in the interval, even if they are crossings of the strands $(C_i,j)$ and $(C_{i'},j')$, get the usual data values $y_{i,j,k}$.

The $z$-coordinate is only important in the neighbourhood of a $\sigma$-crossing, where one loop, say the $j$th strand of the component $C_1$, passes through the hole of the other, say the $j'$th strand of $C_2$. If $H_C$ is a trigonometric polynomial that parametrises the $z$-coordinates of the strands of the component $C$, then the sign of $H'_{C_1}\left(\frac{t+2\pi (j-1)}{s_{C_1}}\right)-H'_{C_2}\left(\frac{t+2\pi (j'-1)}{s_{C_2}}\right)$ at the time $t$ of the crossing determines if the $j$th strand of $C_1$ passes the $j'$th strand of $C_2$ from below (positive sign) or from above (negative sign). The data points for this interpolation must therefore contain information about the values of $H_C$ and its derivative $H'_C$.

For a $\sigma$-crossing between the strands $(C_1,j)$ and $(C_2,j')$ at a time $t$, where the strand $(C_1,j)$ comes from above we need two data points to ensure that $H_{C_1}\left(\frac{t+2\pi (j-1)}{s_{C_1}}\right)=H_{C_2}\left(\frac{t+2\pi (j'-1)}{s_{C_2}}\right)$ and two more to ensure that $H'_{C_1}\left(\frac{t+2\pi (j-1)}{s_{C_1}}\right)<H'_{C_2}\left(\frac{t+2\pi (j'-1)}{s_{C_2}}\right)$. We thus have to find trigonometric polynomials $H_{C_j}$ with specified values $z_i$ of $H_{C_j}$ and values $z'_i$ of the derivative $\tfrac{\partial H_{C_j}}{\partial t}$ at a specified set of data points $t_i$, $i=1,2,\ldots,N$:
\begin{equation}
H_{C_j}(t_i)=z_i,\quad \tfrac{\partial H_{C_j}}{\partial t}(t_i)=z'_i.
\end{equation}

Such trigonometric polynomials always exist and an explicit formula can be found in \cite{nathan}:
\begin{equation}
\label{eq:formula1}
H_{C_i}(t)=\sum_{i=1}^N z_i w_{0,i}+\sum_{i=1}^N z'_i w_{1,i}(t-t_i),
\end{equation}
where 
\begin{align}
w_{0,i}(t)&=\left(1-V_i(t)\sin\left(\frac{t}{2}\right)\cos\left(\frac{t}{2}\right)\right)U_i(t),\\
w_{1,i}(t)&=2\sin\left(\frac{t}{2}\right)\cos\left(\frac{t}{2}\right)U_i(t),\\
U_i(t)&=\prod_{k\neq i}\left(\frac{\sin\left(\tfrac{t-t_k}{2}\right)}{\sin\left(\tfrac{t_i-t_k}{2}\right)}\right)^2,
\end{align}
\begin{equation}
V_i(t)=\sum_{k\neq i}2\cot\left(\frac{t_i-t_k}{2}\right).\label{eq:formula2}
\end{equation}

%can for example be found by solving a Vandermonde-like matrix equation for every component $C$:
%\begin{equation}
%\label{eq:vander}
%\begin{pmatrix}
%1 &t_1 &t_1^2 &\ldots &t_1^{2n-1}\\
%1 &t_2 &t_2^2 &\ldots &t_2^{2n-1}\\
%\vdots &\vdots &\vdots &\ddots &\vdots\\
%1 &t_n &t_n^2 &\ldots &t_n^{2n-1}\\
%0 &1 &2t_1 &\ldots &(2n-1)t_1^{2n-2}\\
%0 &1 &2t_2 &\ldots &(2n-1)t_2^{2n-2}\\
%\vdots &\vdots &\vdots &\ddots &\vdots\\
%0 &1 &2t_n &\ldots &(2n-1)t_n^{2n-2}\\
%\end{pmatrix}
%\begin{pmatrix}
%a_0\\
%a_1\\
%\vdots\\
%\vdots\\
%\vdots\\
%\vdots\\
%a_{2n-1}
%\end{pmatrix}
%=
%\begin{pmatrix}
%z_1\\
%z_2\\
%\vdots\\
%z_n\\
%z'_1\\
%z'_2\\
%\vdots\\
%z'_n
%\end{pmatrix}.
%\end{equation}

\noindent Here $N$ is the number of $\sigma$-crossings that strands from the component in question are involved in (counted with multiplicity) and the $t_i$, $i=1,2,\ldots,N$, are the positions of the $\sigma$-crossings. If for example such a crossing occurs between the strands $(C_1,j)$ and $(C_2,j')$ at $t\in[0,2\pi]$, then one of the $t_i$ for the component $C_1$ should be $\tfrac{t+2\pi (j-1)}{s_{C_1}}$ and one of the $t_k$ for the component $C_2$ should be $\tfrac{t+2\pi (j'-1)}{s_{C_2}}$. The values $z_i$ are the desired values of the interpolating trigonometric polynomials $H_{C_1}\left(\frac{t+2\pi (j-1)}{s_{C_1}}\right)=H_{C_2}\left(\frac{t+2\pi (j'-1)}{s_{C_2}}\right)$. The values $z'_i$ are the desired values of the derivatives $H_{C_1}$ and $H_{C_2}$ at the crossing. We only have to make sure that these values lead to the correct crossing signs as described above, so we have a lot of freedom in the actual choice of data values.

%Like the Vadermonde matrix, the matrix in Eq. (\ref{eq:vander}) is invertible if the $t_\ell$'s are distinct. Hence there is always a unique interpolating trigonometric polynomial $H_C$ for every component $C$.
 
The interpolation for the radius functions $R_C$ is now comparatively simple. We only need to guarantee that at a $\sigma$-crossing the correct loop has a smaller radius, which is determined by the sign of the $\sigma$-crossing in question. We thus need two data points for each $\sigma$-crossing, one for each radius of the two strands involved in the crossing. If the strand $(C_1,j)$ passes through the hole of $(C_2,j')$, then $R_{C_1}\left(\tfrac{t+2\pi (j-1)}{s_{C_1}}\right)$ should be smaller than $R_{C_2}\left(\tfrac{t+2\pi (j'-1)}{s_{C_2}}\right)$, which can be achieved by choosing the data values at these points appropriately. We obtain interpolating trigonometric polynomials $R_C$. Since the radii must be positive at all times, we add the same sufficiently large constant to all $R_C$'s.

Since we have specified only very few values of $R_C$, there could be intersections at this stage. However, after multiplying every $R_C$ by the same small enough positive constant $\epsilon$ no intersections can occur. This is because the only points, where the $x$- and $y$-coordinates of two strands agree are at $\sigma$-crossings, which are precisely the points where we have specified the radii.

We claim that the loop braid that is parametrised by the constructed functions $F_C$, $G_C$, $H_C$ and $R_C$ is isotopic to the desired loop braid. To prove this we refer to Remark \ref{rem:equivloopsing}, which tells us that it suffices to compare the singular braids with signed intersection points corresponding to the core lines of the two loop braids. We call the singular braid with signed intersection points that we obtain via the interpolation procedure $B_1$ and the singular braid that corresponds to the centre lines of the desired loop braid $B_2$. We successively deform $B_1$ in the intervals $[\tfrac{2\pi k}{\ell},\tfrac{2\pi(k+1)}{\ell}]$, into $B_2$, first for $k=0$, then $k=1$ and so on. In each such interval $B_2$ has a unique crossing and the permutation of the strands of $B_1$ in that same interval is the same transposition. Suppose that $(C_i,j)$ is a strand of $B_1$ that does not correspond to a strand of $B_2$ that is involved in a $\sigma$-crossing in $[\tfrac{2\pi k}{\ell},\tfrac{2\pi(k+1)}{\ell}]$. Then, since it has the same $y$-coordinate value $y_{i,j,k}$ on all crossings that it is involved in (all of which are classical crossings), we can deform it so that its $y$-coordinate is constant $y_{i,j,k}$ throughout the interval $[\tfrac{2\pi k}{\ell},\tfrac{2\pi(k+1)}{\ell}]$. If $(C_i,j)$ does correspond to a strand that is involved in a $\sigma$-crossing of $B_2$ in $[\tfrac{2\pi k}{\ell},\tfrac{2\pi(k+1)}{\ell}]$ it can be deformed so that its $y$-coordinate is constant $y_{i,j,k}$ throughout the interval $[\tfrac{2\pi k}{\ell},\tfrac{2\pi(k+1)}{\ell}]$ except in a neighbourhood of the $\sigma$-crossing, where its $y$-coordinate takes values between $y_{i,j,k}$ and the $y_{i',j',k}$, where $(C_{i'},j')$ is the other strand involved in the $\sigma$-crossing. Note that these changes to the $y$-coordinates of the strands do not affect the braid word, since in our convention the word is read from the projection in the $xt$-plane (as in Fig. \ref{fig:loopgen}). Furthermore, the choice of $y_{i,j,k}$ guarantees that each strand has a different $y$-coordinate throughout the interval $[\tfrac{2\pi k}{\ell},\tfrac{2\pi(k+1)}{\ell}]$. 

Now we can deform each strand of $B_1$ that does not correspond to a strand that is involved in the unique crossing of $B_2$ in $[\tfrac{2\pi k}{\ell},\tfrac{2\pi(k+1)}{\ell}]$ by keeping its $y$-coordinate fixed and making its $x$-coordinate constant, too. This does not cause any intersections because all the strands have different $y$-coordinates. This leaves us with a singular braid with signed intersections, where in each $t$-interval $[\tfrac{2\pi k}{\ell},\tfrac{2\pi(k+1)}{\ell}]$ there are only two strands that are involved in crossings. Say the first of these crossings occurs at $t=t_k$. Since we know that the strands of $B_1$ are permuted in each interval $[\tfrac{2\pi k}{\ell},\tfrac{2\pi(k+1)}{\ell}]$ in the same way as the strands of $B_2$ and because any possible $\sigma$-crossing is the first crossing in the interval $[\tfrac{2\pi k}{\ell},\tfrac{2\pi(k+1)}{\ell}]$ by construction, we can apply the same argument that we used for the other strands in $[\tfrac{2\pi k}{\ell},\tfrac{2\pi(k+1)}{\ell}]$ for these two strands in the interval $[t_k+\epsilon,\tfrac{2\pi(k+1)}{\ell}]$ for some small $\epsilon>0$. Thus the strands of $B_1$ are deformed so that (just like for $B_2$) there is a unique crossing in $[\tfrac{2\pi k}{\ell},\tfrac{2\pi(k+1)}{\ell}]$. This crossing has the same type (classical or $\sigma$-crossing), the same sign and involves the same strands as the corresponding crossing of $B_2$. Applying these arguments successively to the intervals $[\tfrac{2\pi k}{\ell},\tfrac{2\pi(k+1)}{\ell}]$, $k=0,1,2,\ldots,\ell-1$, results in a singular braid that has only one crossing in each interval $[\tfrac{2\pi k}{\ell},\tfrac{2\pi(k+1)}{\ell}]$ and this crossing involves the same pair of strands as the unique crossing of $B_2$ in that same interval and with the same crossing sign. On the level of braid words the isotopies that were used above correspond to Eqs. (\ref{eq:rel1}) through (\ref{eq:rel2}). Hence, $B_1$ and $B_2$ are equivalent modulo Eqs. (\ref{eq:rel1}) through (\ref{eq:rel2}) and thus by Remark \ref{rem:equivloopsing} the interpolating trigonometric polynomials parametrise a loop braid that is isotopic to the desired one.
\end{proof}

%\begin{figure}
%\centering
%\includegraphics[height=4cm]{xyinterpol}
%\caption{A visualisation of the $x$-coordinate of a loop braid. A visualisation of the $x$- and the $y$-coordinate of a loop braid.}
%\end{figure}

This proves the following theorem and Theorem \ref{thm:intro1}.
\begin{theorem}
For every element $B$ of the loop braid group Algorithm 1 constructs a polynomial $f:\mathbb{R}^5\to\mathbb{R}^2$ such that the vanishing set $f^{-1}(0)\cap S^4$ on the unit 4-sphere contains a set that is ambient isotopic to the closure of $B$.
\end{theorem}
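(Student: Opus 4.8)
The plan is to assemble the chain of lemmas established above into a single statement, since each step of Algorithm 1 has already been justified individually. First I would start with an arbitrary word representing $B$ in the generators $\sigma_i$, $\rho_i$ and their inverses, and invoke the final interpolation lemma to produce, for every component $C$ of the closure, trigonometric polynomials $F_C$, $G_C$, $H_C$ and $R_C$ whose associated parametrisation describes a loop braid isotopic to $B$. This accounts for Steps 1 through 4 of the algorithm.

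Next I would feed these trigonometric polynomials into the product of Eq. (\ref{eq:looppoly}) to form $g_\lambda$. By the product-expansion lemma the expanded product is genuinely a polynomial in $x$, $y$, $z$, $\rme^{\rmi t}$ and $\rme^{-\rmi t}$; replacing every $\rme^{\rmi t}$ by a complex variable $v$ and every $\rme^{-\rmi t}$ by its conjugate $\overline{v}$ yields the polynomial $f_\lambda:\mathbb{R}^5\to\mathbb{R}^2$ of Step 5, satisfying $f_\lambda(x,y,z,\cos t,\sin t)=g_\lambda(x,y,z,t)$. This produces the object named in the statement; what remains is to verify that its vanishing set on $S^4$ contains the correct surface.

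For the topological conclusion I would combine Lemma \ref{lem:implicit} and Lemma \ref{lem:smalllambda}. Lemma \ref{lem:implicit} supplies a $\delta\in(0,1)$ for which the constructed loop braid sits inside $f_\lambda^{-1}(0)\cap(\mathbb{R}^3\times rS^1)$ for every $r\in[1-\delta,1]$, with $0$ a regular value along these rings; Lemma \ref{lem:smalllambda} then shows that for sufficiently small $\lambda>0$ the projection $\Psi$ carries this family to a subset of $f_\lambda^{-1}(0)\cap S^4$ that is ambient isotopic to the closure of $B$, via the isotopy extension theorem. Fixing $\lambda$ as prescribed in Step 6 completes the argument, and since $\lambda$ only rescales the configuration it does not affect the isotopy type.

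The step I expect to be the genuine obstacle, and the one the interpolation lemma is devoted to, is guaranteeing that the interpolated parametrisation reproduces exactly the isotopy type of $B$ rather than some other loop braid inducing the same strand permutations. The subtlety is twofold: the $x$-coordinate interpolation $F_C$ may create additional unwanted crossings, which must be cancelled in pairs through a careful choice of $y$-data for $G_C$, and the $\sigma$-crossings require coordinated control of both the $z$-coordinate $H_C$ (its value and first derivative, via the Hermite-type trigonometric interpolation formula of Eqs. (\ref{eq:formula1})--(\ref{eq:formula2})) and the radius $R_C$, so that the correct ring threads through the correct hole with the prescribed sign. Verifying that the resulting signed singular braid agrees with the core of $B$ modulo the monoid relations (\ref{eq:rel1}) through (\ref{eq:rel2}), and hence by Remark \ref{rem:equivloopsing} as a loop braid, is where the real content lies; by contrast the analytic step fixing $\lambda$ is routine once the implicit function theorem argument of Lemma \ref{lem:implicit} is in hand.
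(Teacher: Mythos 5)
Your proposal is correct and follows essentially the same route as the paper, which likewise obtains the theorem by chaining the interpolation lemma (Steps 1--4), the product-expansion lemma and substitution $v=\rme^{\rmi t}$ (Step 5), and Lemmas \ref{lem:implicit} and \ref{lem:smalllambda} (Step 6). You also correctly identify the interpolation lemma, and in particular the verification via Remark \ref{rem:equivloopsing} that the interpolated signed singular braid agrees with the core of $B$, as the part carrying the real content.
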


Like in \cite{bode:2016polynomial} we can give a bound on the degree. 
\begin{proposition}
\label{prop:bound}
Let $B$ be a loop braid on $s$ strands, represented by a word of length $\ell$ and $f:\mathbb{R}^5\to\mathbb{R}^2$ the corresponding polynomial found by Algorithm 1. For each component $C$ of the closure of $B$ we denote the number of strands in $C$ by $s_C$. Then
\begin{equation}
\deg f\leq2\sum_C\max\left\{\left\lfloor\frac{(s_C+1)(s_C\ell-1)+\ell s_C(s-s_C)-1}{2}\right\rfloor,s_C\right\}.
\end{equation}
\end{proposition}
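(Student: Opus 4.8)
The plan is to reduce the bound to a per-component computation and then to isolate the single new phenomenon — that the factors $P_{C,j}$ are \emph{quadratic} in the spatial variables — which is ultimately responsible for the factor $2$ in front of the classical estimate. First I would record the structure of $f_\lambda$ as a polynomial in $(x,y,z,v,\overline v)$. Since every integer frequency $\rme^{\rmi kt}$ in the expanded product is replaced by either $v^k$ (for $k\geq 0$) or $\overline v^{|k|}$ (for $k<0$), the polynomial $f_\lambda$ contains no mixed monomial $v^a\overline v^b$ with $a,b>0$; hence its total degree equals $\max_k\left(\deg_{x,y,z}P_k+|k|\right)$, where $P_k$ is the coefficient of frequency $k$. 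Writing $g_\lambda=\prod_C\big(\prod_{j=1}^{s_C}P_{C,j}\big)$ and using that frequencies add under multiplication while $|\sum_C m_C|\leq\sum_C|m_C|$, this gives the subadditivity
\begin{equation}
\deg f\leq\sum_C\deg\Big(\prod_{j=1}^{s_C}P_{C,j}\Big),
\end{equation}
so it suffices to bound each component factor separately.

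For a fixed component $C$ I would split each factor as $P_{C,j}=A+\ell_j(t)+c_j(t)$, where $A=x^2+y^2+\rmi z$ has spatial degree $2$ and no $t$-dependence, $\ell_j(t)=-2x\,x_{C,j}(t)-2y\,y_{C,j}(t)$ has spatial degree $1$, and $c_j(t)=x_{C,j}(t)^2+y_{C,j}(t)^2-\rho_{C,j}(t)^2-\rmi z_{C,j}(t)$ has spatial degree $0$. Set $d_C=\max\{\deg F_C,\deg G_C,\deg H_C,\deg R_C\}$. Because $x_{C,j},y_{C,j}$ are trigonometric polynomials of degree $\leq d_C$ evaluated at $(t+2\pi(j-1))/s_C$, each $\ell_j$ has frequencies bounded by $d_C/s_C$, while the \emph{squared} terms in $c_j$ push its frequencies up to $2d_C/s_C$. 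Expanding $\prod_{j=1}^{s_C}P_{C,j}$, a monomial is obtained by choosing $A$ from $s_C-m$ factors, an $\ell_j$ from $p$ factors and a $c_j$ from the remaining $m-p$ factors, with $0\leq p\leq m\leq s_C$. Its spatial degree is $2(s_C-m)+p$, and after summing over the choices of factors the coefficient is symmetric in the strand index, so (as in the preceding lemma) only integer frequencies survive the roots-of-unity cancellation, bounded by $\lfloor(2m-p)d_C/s_C\rfloor$. Hence the total degree of the component factor is at most
\begin{equation}
\max_{0\leq p\leq m\leq s_C}\Big(2(s_C-m)+p+\big\lfloor\tfrac{(2m-p)d_C}{s_C}\big\rfloor\Big).
\end{equation}

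The optimisation is elementary: the dependence on $p$ has slope $1-d_C/s_C$, and a check of the corners $(m,p)\in\{(0,0),(s_C,0),(s_C,s_C)\}$, where the floor is exact, yields the values $2s_C$, $2d_C$ and $s_C+d_C$. The maximum is therefore $2\max\{d_C,s_C\}$, exactly twice the classical per-component degree $\max\{d_C,s_C\}$; the doubling comes simultaneously from the spatial degree $2$ of $A$ and from the doubled frequency ceiling $2d_C/s_C$ of the quadratic coefficients $c_j$. It then remains to bound $d_C$ by the classical estimate $d_{\mathrm{cl},C}=\big\lfloor((s_C+1)(s_C\ell-1)+\ell s_C(s-s_C)-1)/2\big\rfloor$: the polynomials $F_C,G_C$ are interpolated from the same data as in \cite{bode:2016polynomial} (the strand positions at the $\ell$ crossings), so $\deg F_C,\deg G_C\leq d_{\mathrm{cl},C}$, while $H_C,R_C$ are interpolated only at the $\sigma$-crossings, of which there are fewer, so their degrees are no larger. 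Consequently $d_C\leq d_{\mathrm{cl},C}$, and summing $2\max\{d_C,s_C\}\leq 2\max\{d_{\mathrm{cl},C},s_C\}$ over $C$ gives the claim.

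The hard part will be the frequency bookkeeping underlying the second display: one must verify carefully that the Hermite-type interpolation formula (\ref{eq:formula1})--(\ref{eq:formula2}) for $H_C$ and the interpolation for $R_C$ never raise the degree beyond $d_{\mathrm{cl},C}$, and that the roots-of-unity cancellation genuinely caps the surviving frequencies at $\lfloor(2m-p)d_C/s_C\rfloor$ rather than the naive $(2m-p)d_C/s_C$. Once these two ingredients are secured, the degree-doubling optimisation and the subadditivity over components are routine.
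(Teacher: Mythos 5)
Your overall strategy --- subadditivity over components, the spatial-degree/frequency bookkeeping for $\prod_j P_{C,j}$, and the reduction to the degrees of the interpolating trigonometric polynomials --- matches the paper's proof (which delegates most of this bookkeeping to the lower-dimensional case in \cite{bode:2016polynomial}), and your corner optimisation correctly identifies the source of the factor $2$. The gap is in the treatment of $H_C$, and it sits exactly where you flagged the ``hard part''. First, $H_C$ enters $c_j$ only \emph{linearly}, through $-\rmi z_{C,j}(t)$, so its contribution to the frequencies of $c_j$ is $\deg H_C/s_C$ rather than $2\deg H_C/s_C$; by folding $\deg H_C$ into a single $d_C$ with frequency ceiling $2d_C/s_C$ you double it unnecessarily, and your per-component bound becomes $2\max\{d_C,s_C\}$ with $d_C\geq\deg H_C$. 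Second, the claimed inequality $\deg H_C\leq d_{\mathrm{cl},C}$ is false in general: the interpolation for $H_C$ is a Hermite-type interpolation prescribing both values and derivatives at the $N$ $\sigma$-crossings that the component meets (counted with multiplicity), and the formula (\ref{eq:formula1})--(\ref{eq:formula2}) yields $\deg H_C=N$ --- the number of \emph{conditions} divided by two, not the number of nodes divided by two. For the loop braid $\sigma_1\sigma_1$ on $s=2$ strands ($\ell=2$, two components with $s_C=1$, each strand meeting both $\sigma$-crossings) one has $N=2$ while $d_{\mathrm{cl},C}=\lfloor(2\cdot 1+2\cdot 1-1)/2\rfloor=1$, so $d_C\geq 2>1=\max\{d_{\mathrm{cl},C},s_C\}$ and your chain gives $4$ per component against the claimed $2$.

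The two defects cancel once the first is repaired: keeping the linear occurrence of $H_C$ separate in the expansion, the per-component total degree is bounded by $\max\{2s_C,\,2\deg F_C,\,2\deg G_C,\,2\deg R_C,\,\deg H_C\}$ --- the formula the paper uses --- and one then only needs $\deg H_C=N\leq 2\max\{d_{\mathrm{cl},C},s_C\}$, which does hold (e.g.\ $N\leq 2\ell$ for $s_C\geq 2$ and $N\leq\ell$ for $s_C=1$, against $d_{\mathrm{cl},C}$ growing at least linearly in $\ell$, with the $2s_C$ term covering the short-word cases), rather than the false $N\leq\max\{d_{\mathrm{cl},C},s_C\}$. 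With that correction your subadditivity step, the corner check, and the bounds on $\deg F_C$ and $\deg G_C$ inherited from the classical interpolation complete the proof as in the paper.
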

\begin{proof}
The proof follows the same line of argument as the analogous result in \cite{bode:2016polynomial}. The degree of $f$ is determined by the degrees of the trigonometric polynomials, which in turn are determined by the number of data points in the corresponding interpolation problem. Since $F_C$ and $G_C$ are constructed in precisely the same way as in \cite{bode:2016polynomial}, i.e., require the same number of data points, their degrees satisfy the same bounds:
\begin{align}
\deg F_C&=\left\lfloor\frac{s_C\ell-1}{2}\right\rfloor,\nonumber\\
\deg G_C&\leq \left\lfloor\frac{(s_C+1)(s_C\ell-1)+\ell s_C(s-s_C)-1}{2}\right\rfloor.
\end{align}
Let $N$ be the number of $\sigma$-crossings that the component $C$ is involved in (counting with multiplicities). Then the trigonometric interpolation for $H_C$ requires $2N$ data points and the interpolation for $R_C$ requires $N$ data points.
Hence
\begin{align}
\deg H_C&=N,\\
\deg R_C&=\left\lfloor\frac{N-1}{2}\right\rfloor\label{eq:rbound}.
\end{align}   
In the same vain as in \cite{bode:2016polynomial} we find that
\begin{align}
\deg f=\sum_C \max\{2\deg F_C,2\deg G_C,\deg H_C,2\deg R_C,2s_C\}\leq2\sum_C\max\{\deg G_C,s_C\}.
\end{align}
Since $s_C$, $N$ and $\ell$ are positive integers, the bound for the degree of $G_C$ is the largest. The previously obtained bounds for the degrees of the trigonometric polynomials result in the bound given in the proposition.
\end{proof}

\begin{corollary}
Let $B$ be a loop braid on $s$ strands, represented by a word of length $\ell$, whose closure has only one component and let $f:\mathbb{R}^5\to\mathbb{R}^2$ be the corresponding polynomial found by Algorithm 1. Then
\begin{equation}
\deg f\leq2\left\lfloor\frac{(s+1)(s\ell-1)-1}{2}\right\rfloor.
\end{equation}
\end{corollary}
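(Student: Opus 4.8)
The plan is to derive this directly from Proposition~\ref{prop:bound} by specializing to a connected closure. First I would observe that the hypothesis ``whose closure has only one component'' means that all $s$ strands belong to a single component $C$, so that $s_C=s$ and the sum $\sum_C$ appearing in the bound of Proposition~\ref{prop:bound} reduces to a single summand. The whole problem therefore collapses to understanding one maximum.

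The key simplification comes from substituting $s_C=s$ into that summand. The mixed term $\ell s_C(s-s_C)$, which in the general bound records the crossings between the distinguished component and the remaining strands, becomes $\ell s(s-s)=0$, precisely because there are no other components. Consequently the floor expression simplifies to $\bigl\lfloor\tfrac{(s+1)(s\ell-1)-1}{2}\bigr\rfloor$, and Proposition~\ref{prop:bound} yields
\begin{equation}
\deg f\leq 2\max\left\{\left\lfloor\frac{(s+1)(s\ell-1)-1}{2}\right\rfloor,\,s\right\}.
\end{equation}

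It then remains only to show that this maximum is attained by the floor term, so that the ``$s$'' can be dropped and the stated bound emerges; this is the single point in the argument that requires genuine verification rather than bookkeeping. I would reduce the desired inequality $\bigl\lfloor\tfrac{(s+1)(s\ell-1)-1}{2}\bigr\rfloor\geq s$ to the integer inequality $(s+1)(s\ell-1)-1\geq 2s$, equivalently $s\ell(s+1)\geq 3s+2$, and then invoke the fact that a loop braid with connected closure must act on its $s$ strands as a single $s$-cycle, forcing $\ell\geq s-1$. The main (indeed the only) obstacle is handling this estimate together with the small boundary values of $s$ and $\ell$, where the floor and the linear term are comparable; once one confirms that the floor term dominates for the braids under consideration, the claimed bound follows immediately.
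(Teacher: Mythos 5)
Your approach coincides with what the paper does: the corollary is stated without a separate proof, as an immediate specialization of Proposition \ref{prop:bound} to a one-component closure, exactly as you describe (set $s_C=s$, so the mixed term $\ell s_C(s-s_C)$ vanishes and the sum collapses to a single maximum). The one place where you go beyond the paper --- checking that $\max\{\lfloor((s+1)(s\ell-1)-1)/2\rfloor,\,s\}$ is attained by the floor term --- is exactly the right thing to worry about, and your reduction to the integer inequality $s\ell(s+1)\ge 3s+2$ is correct. Your observation that a connected closure forces $\ell\ge s-1$ settles this for all $s\ge 3$, since then $s\ell(s+1)\ge s(s^2-1)\ge 3s+2$. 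However, the boundary cases you flag are genuinely problematic rather than mere bookkeeping: for $s=2$, $\ell=1$ (a single generator $\sigma_1$ or $\rho_1$, whose closure is connected) the floor term equals $1<2=s$, so the maximum is attained by $s$ and the stated bound of $2$ does not follow from the proposition --- indeed the construction yields $\deg f=2s=4$ there, since $g$ is a product of $s$ quadratics in $(x,y,z)$. (For $s=2$ connectivity forces $\ell$ odd, so $\ell\ge 3$ rescues every other two-strand case; $s=1$ forces $\ell=0$, which is likewise degenerate.) So a complete writeup along your lines should either exclude these one-crossing and zero-crossing cases or retain the bound in the form $2\max\{\lfloor\cdot\rfloor,s\}$; for all remaining loop braids with connected closure your argument closes the gap and recovers the stated inequality.
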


Bounds on the degree of trigonometric parametrisations of a given braid have also been studied in \cite{klaus1, klaus2}. The arguments in \cite{bode:2016polynomial} prove not only bounds on the degree of the constructed polynomials with knotted vanishing sets, but also on the degree of the trigonometric polynomials of such Fourier braids. Similarly, the arguments in the proof of Proposition \ref{prop:bound} offer bounds on the degree of trigonometric polynomials parametrising a given loop braid.

\begin{remark}
\label{rem:stereo}
%We would also like to point out that a composition of an inverse stereographic projection $\mathbb{R}^4\to S^4$ with the constructed polynomial $f$ results in a rational map $\mathbb{R}^4\to\mathbb{R}^2$ whose denominator is some power of $x_1^2+x_2^2+x_3^2+x_4^2+1$, which never vanishes.
Consider the inverse stereographic projection $\mathbb{R}^4\to S^4$ given by
\begin{align}
\label{eq:stereoo}
x&=\frac{2x_1}{x_1^2+x_2^2+x_3^2+x_4^2+1},\nonumber\\
y&=\frac{2x_2}{x_1^2+x_2^2+x_3^2+x_4^2+1},\nonumber\\
z&=\frac{x_1^2+x_2^2+x_3^2+x_4^2-1}{x_1^2+x_2^2+x_3^2+x_4^2+1},\nonumber\\
v&=\frac{2(x_3+\rmi x_4)}{x_1^2+x_2^2++x_3^2+x_4^2+1}.
\end{align}
A composition of this map with the constructed polynomial $f$ results in a rational map $\mathbb{R}^4\to\mathbb{R}^2$ whose denominator is some power of $x_1^2+x_2^2+x_3^2+x_4^2+1$, which never vanishes. Hence the numerator is a polynomial $\tilde{F}:\mathbb{R}^4\to\mathbb{R}^2$ whose vanishing set contains the closure of the desired loop braid $B$.

Note that it follows from the proof of Lemma \ref{lem:implicit} that the points on $B\subset S^4$ are regular points of $f$. Furthermore, the intersection of the vanishing set of $f$ and $S^4$ at any point on $B$ is transverse if $\lambda$ is sufficiently small. It follows that the points on $B\subset S^4$ are also regular points of $f|_{S^4}$. Hence the part of the vanishing set of $\tilde{F}$ that is isotopic to $B$ consists only of regular points. Again, this is completely analogous to the lower-dimensional situation in \cite{bode:2016polynomial}. However, zero is not necessarily a regular value, as the extra components might contain critical points. At this moment we are not aware of a constructive method that would remove the extra components or guarantee that zero becomes a regular value. Therefore, this remains an interesting question for future research.

Furthermore, since the points on the loop braid $B$ are regular points, any polynomial that differs from the construction by only a sufficiently small perturbation of the coefficients still has the closure of $B$ as part of its vanishing set. Hence we can assume that the coefficients are in $\mathbb{Q}[\rmi]$ and, after multiplying by a common denominator, in the Gaussian integers $\mathbb{Z}[\rmi]$.
\end{remark}

The algorithm outlined above follows the lower-dimensional case of \cite{bode:2016polynomial} closely. In particular, the polynomial $f_\lambda$ is obtained from the braid polynomial by substituting $v$ for $\rme^{\rmi t}$ and $\overline{v}$ for $\rme^{-\rmi t}$. In \cite{bode:quasipositive} the first author presented an alternative construction of polynomials for (classical) links. Substituting $v$ for $\rme^{\rmi t}$ and $\tfrac{1}{v}$ for $\rme^{-\rmi t}$ in the polynomial expression for $g_{\lambda}$ yields a rational map whose denominator is some power of $v$ and whose numerator is a holomorphic polynomial in complex variables $u$ and $v$. The numerator's vanishing set intersects $S^3$ transversally in a link that contains the desired link as a sublink. Hence, the holomorphicity comes at the price of additional components.

In the algorithm above we can just as well use this alternative substitution and obtain a rational map, whose denominator is some power of $v$ and whose numerator $\tilde{f}_{\lambda}$ is a polynomial in $x$, $y$, $z$ and the complex variable $v$. In particular, $\tilde{f}_\lambda$ is holomorphic with respect to $v$. All results that were shown for $f_\lambda$ in this section (with a different bound on the degree due to the degree of the denominator) remain true for sufficiently small values of $\lambda$ for $\tilde{f}_\lambda$.
\begin{proposition}
Let $B$ be a loop braid with $s$ strands, represented by a word of length $\ell$. For a component $C$ of its closure we denote the number of strands in $C$ by $s_C$. Then we can construct a polynomial $\tilde{f}_\lambda:\mathbb{R}^5\to\mathbb{R}^2$ with the following properties:
\begin{itemize}
\item $\tilde{f}_\lambda$ can be written as a polynomial $\mathbb{R}^3\times\mathbb{C}\to\mathbb{C}$. In particular, it is holomorphic with respect to the complex variable.
\item For small enough $\lambda>0$ the vanishing set of $\tilde{f}_\lambda$ intersects $S^4$ transversally in a set that contains the closure of $B$.
\item We have
\begin{align}\deg \tilde{f}_\lambda\leq &2\sum_C\max\left\{\left\lfloor\frac{(s_C+1)(s_C\ell-1)+\ell s_C(s-s_C)-1}{2}\right\rfloor,s_C\right\}\nonumber\\
&+2\sum_C \left\lfloor\frac{(s_C+1)(s_C\ell-1)+\ell s_C(s-s_C)-1}{2}\right\rfloor.
\end{align}
\end{itemize}
\end{proposition}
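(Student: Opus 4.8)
The plan is to follow the paragraph preceding the statement and to recycle the three lemmas of this section almost verbatim, the only genuine change being the bookkeeping of the degree. First I would write $g_\lambda$ from Eq.~(\ref{eq:looppoly}), after expanding the product and inserting the trigonometric parametrisations, as a Laurent polynomial $g_\lambda(x,y,z,t)=\sum_{k=-K_-}^{K_+}c_k(x,y,z)\,\rme^{\rmi k t}$ with polynomial coefficients $c_k$. Performing the alternative substitution $\rme^{\rmi t}\mapsto v$, $\rme^{-\rmi t}\mapsto 1/v$ produces the Laurent polynomial $\tilde g_\lambda(x,y,z,v)=\sum_k c_k(x,y,z)\,v^{k}$ in $v$; multiplying by $v^{K_-}$ clears the denominator and defines $\tilde f_\lambda\defeq v^{K_-}\tilde g_\lambda=\sum_k c_k\,v^{k+K_-}$, which is a genuine polynomial, holomorphic in $v$ and polynomial in $x$, $y$, $z$. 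This already yields the first bullet.

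The key observation is that on the unit circle $|v|=1$ one has $1/v=\overline v$, so there $\tilde g_\lambda=g_\lambda=f_\lambda$ and hence $\tilde f_\lambda=v^{K_-}f_\lambda$ on $\mathbb{R}^3\times S^1$. Because $v\neq 0$ there, the two vanishing sets coincide on $\mathbb{R}^3\times S^1$, so $B\subset\tilde f_\lambda^{-1}(0)\cap(\mathbb{R}^3\times S^1)$. I would then rerun Lemma~\ref{lem:implicit} with the same auxiliary angular coordinate: for a fixed $v$ with $|v|=1$ the map $(x,y,z)\mapsto\tilde f_\lambda$ equals $v^{K_-}$ times $(x,y,z)\mapsto f_\lambda$, so the $(x,y,z)$-gradient is multiplied by the nonzero rotation-and-scaling factor $v^{K_-}$, which preserves invertibility of the gradient matrix. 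Thus the analytic implicit function theorem argument transfers unchanged, extends by continuity to $r\in[1-\delta,1]$, and Lemma~\ref{lem:smalllambda} applies as written to give $B\subset\tilde f_\lambda^{-1}(0)\cap S^4$ for small $\lambda$; the transversality and regularity of the points of $B$ follow exactly as in Remark~\ref{rem:stereo}, again because the clearing factor $v^{K_-}$ is nonvanishing near $B$. This gives the second bullet.

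For the degree I would compare term by term: from $\tilde f_\lambda=\sum_k c_k v^{k+K_-}$ one gets $\deg\tilde f_\lambda=K_-+\max_k(\deg_{x,y,z}c_k+k)$, whereas the proof of Proposition~\ref{prop:bound} gives $\deg f_\lambda=\max_k(\deg_{x,y,z}c_k+|k|)$. Since $\max_k(\deg_{x,y,z}c_k+k)\le\deg f_\lambda$, the first summand reproduces the bound $2\sum_C\max\{\lfloor\tfrac{(s_C+1)(s_C\ell-1)+\ell s_C(s-s_C)-1}{2}\rfloor,s_C\}$ of Proposition~\ref{prop:bound}. The extra contribution is the most negative power $K_-$ of $\rme^{\rmi t}$ in $g_\lambda$, i.e. the $\overline v$-degree of $f_\lambda$, which is bounded by $\sum_C\max\{2\deg F_C,2\deg G_C,\deg H_C,2\deg R_C\}$ and hence, by the degree estimates already recorded for these trigonometric polynomials, by $2\sum_C\lfloor\tfrac{(s_C+1)(s_C\ell-1)+\ell s_C(s-s_C)-1}{2}\rfloor$. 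Adding the two bounds gives the claimed inequality.

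I expect the main obstacle to be a matter of care rather than a new idea: one must check that the zeros forming $B$ genuinely stay in the region where $|v|$ is close to $1$, so that the exact identity $\tilde f_\lambda=v^{K_-}f_\lambda$ on $\mathbb{R}^3\times S^1$, together with continuity off the circle, controls the behaviour of $\tilde f_\lambda$ near $B$ and guarantees that the clearing factor never vanishes there. The only arithmetic subtlety is pinning down the exact exponent $K_-$ of the clearing monomial in terms of the $\deg G_C$, which is the single place where the bound differs from the $f_\lambda$ case.
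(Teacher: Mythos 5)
Your proposal is correct and follows essentially the same route the paper intends: the paragraph preceding the proposition describes exactly this substitution $\rme^{-\rmi t}\mapsto 1/v$, clearing of the denominator $v^{K_-}$, and the observation that the analysis for $f_\lambda$ carries over since the two functions agree up to the nonvanishing factor $v^{K_-}$ on $|v|=1$, with the denominator degree accounting for the extra summand in the bound. Your filling-in of the details (invertibility of the $(x,y,z)$-gradient being preserved under multiplication by $v^{K_-}$, rerunning the implicit function theorem for $\tilde f_\lambda$ near $|v|=1$, and bounding $K_-$ by the trigonometric degrees dominated by $\deg G_C$) is exactly what the paper leaves implicit.
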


\subsection{An example}
\label{sec:example}

We illustrate the algorithm by going through the construction of a polynomial for the loop braid $\rho_1^{-1}\rho_2\sigma_1\rho_2\rho_1^{-1}$ in more detail. Note that this loop braid is obtained from the example braid $\sigma_1^{-1}\sigma_2\sigma_1^{-1}\sigma_2\sigma_1^{-1}$ in Section \ref{sec:review} by replacing each strand by a cylinder and changing one of the $\rho_1$s to a $\sigma_1$. A broken surface diagram is shown in Figure \ref{fig:examplesurf}, where the braid word is read from the bottom to the top. Note that for every value of the $t$-coordinate the intersection of the loop braid with the slice of constant $t$-values is the disjoint union of 3 (planar) loops. In particular, the link type does not change and $t|_B$ does not have any critical points. This might not be obvious from Figure \ref{fig:examplesurf}, since it displays a projection of a surface in 4-dimensional space to a 2+1-dimensional diagram, but it becomes apparent in comparison with Figure \ref{fig:loopgen}, which explains how broken surface diagrams can be interpreted as motions of loops. The fact that $t|_B$ does not have any critical points holds not only in this example but in general. This follows directly from the definition of the loop braid group, which does not allow motions that include intersections of a loop with itself or any other of the loops. The braid word in this example could be significantly simplified using the relations of the loop braid group, which would lead to a simpler polynomial, but this is not the point of this example. We simply want to illustrate how Algorithm 1 works for an arbitrary braid word.

\begin{figure}
\centering
\labellist
\large
\pinlabel $x$ at 830 480
\pinlabel $t$ at 300 1000
\pinlabel $y$ at 100 100
\endlabellist
\includegraphics[height=6cm]{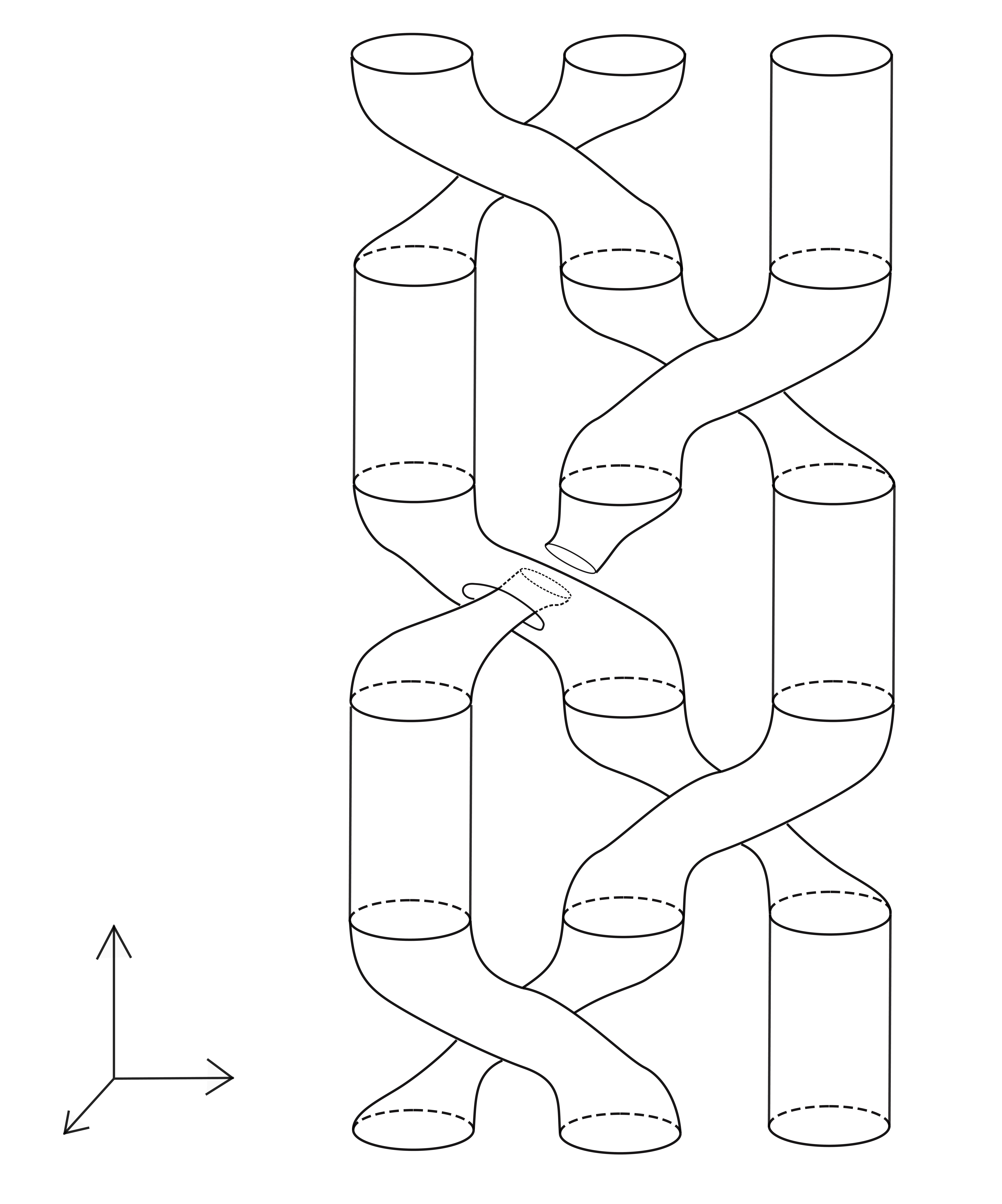}
\caption{A broken surface diagram of the example braid $\rho_1^{-1}\rho_2\sigma_1\rho_2\rho_1^{-1}$. \label{fig:examplesurf}}
\end{figure}

Note that the interpolation for the trigonometric polynomials $F_{C_1}$ and $F_{C_2}$, i.e., the $x$-coordinates of the centres of the rings, is the same as in the example in Section \ref{sec:review}. Hence we find the same functions
\begin{align}
F_{C_1}(t)=&-0.200+1.047\cos(t)+0.153\cos(2t),\nonumber\\
F_{C_2}(t)=&0.100+0.971\cos(t)-0.524\cos(2t)\nonumber\\
&-0.371\cos(3t)-0.076\cos(4t)-0.100\cos(5t).
\end{align}

We interpret the union of the graphs of $t\mapsto F_{C_i}\left(\tfrac{t+2\pi (j-1)}{s_i}\right)$, $[0,2\pi]\to\mathbb{R}$, $i=1,2$, $j=1,\ldots,s_i$, $s_1=1$, $s_2=2$, as a braid diagram without specified signs of crossings. The strand that forms the component $C_1$ is involved in 4 crossings. The values of the $t$-coordinate and the desired values of its $y$-coordinate at these crossings are
\begin{align}
\label{eq:datapointsc1}
&\{(t,y):\text{ crossing involving }C_1 \text{ at } t \text{ with desired value }y\}\nonumber\\
=&\{(0.794,-1),(1.857,1),(4.426,-1),(5.490,1)\},
\end{align} 
where we have chosen a $y$-value of 1 if $C_1$ is the overpassing strand in the $\rho_i$-crossing in question and -1 if it is the underpassing strand. The tuples in Eq. (\ref{eq:datapointsc1}) are the data points for the interpolation for $G_{C_1}$. Note that this strand is not involved in any $\sigma$-crossings. Hence it is the same interpolation as in Section \ref{sec:review} and we find the same trigonometric polynomial
\begin{equation}
G_{C_1}(t)=0.520-0.721\cos(t)-0.341\sin(t)+0.860\cos(2t)-1.243\sin(2t).
\end{equation} 

The two strands that form the component $C_2$ are involved in crossings at positions $t=0.794$, $t=3.142$, $t=4.426$ and $t=1.857$, $t=3.142$, $t=5.490$, respectively. These values are rounded, while the computation is done with numerical accuracy. We obtain the list of data points
\begin{align}
\{&(0.794/2,1),(3.142/2,0),(4.426/2,1),\nonumber\\
&(1.857/2+\pi,-1),(3.142/2+\pi,0),(5.490/2+\pi,-1)\}.
\end{align}
Note that this list differs from that in Eq. (\ref{eq:classdata}) only in that both values of the $y$-coordinate at the crossing at $t=3.142$ have been changed to 0, reflecting the change to a $\sigma$-crossing.

Using trigonometric interpolation we find
\begin{align}
G_{C_2}(t)=&0.047-0.057\cos(t)+0.804\sin(t)+0.047\cos(2t)-0.080\sin(2t)\nonumber\\
&-0.072\cos(3t)+0.804\sin(3t).
\end{align}

Since the component $C_1$ is not involved in any $\sigma$-crossings, there are no data points for its $z$- and $R$-coordinate. We can choose $H_{C_1}(t)=0$ and $R_{C_1}(t)=1$.

For the component $C_2$ there is a $\sigma$-crossing between the two strands that form this component at $t=3.142$. Thus the $z$-value of the rings should be equal at this crossing, i.e. we need the two data points $(3.142/2, 0)$ and $(3.142/2+\pi,0)$. Furthermore, the crossing in question is a $\sigma_1$ (as opposed to a $\sigma_1^{-1}$). Hence the derivative of the interpolating trigonometric polynomial $\tfrac{\partial H_{C_2}}{\partial t}$ should be smaller at $t=3.142/2$ than at $t=3.142/2+\pi$, since the ring that we regard as the first strand of $C_2$ is passing through the hole of the other strand from above. We choose $\tfrac{\partial H_{C_2}}{\partial t}(3.142/2)=-1$ and $\tfrac{\partial H_{C_2}}{\partial t}(3.142/2)=1$. The interpolation can now be performed by using Eqs. (\ref{eq:formula1}) through (\ref{eq:formula2}).
We obtain
\begin{equation}
H_{C_2}(t)=\cos(t).
\end{equation}

Similarly, for the radii of the rings in $C_2$, the only condition is that at the $\sigma$-crossing at $t=3.142$ the first strand of $C_2$ has a smaller radius than the second strand, so it can pass through without intersection points. Thus we can require that $R_{C_2}(3.142/2)=1$ and $R_{C_2}(3.142/2+\pi)=2$ and obtain as the interpolating trigonometric polynomial
\begin{equation}
R_{C_2}(t)=1.500-0.500\sin(t).
\end{equation}

It is usually necessary to add a positive constant to turn $R_{C}$ into a positive function. In this case, it is not necessary, since both functions are already positive.

Now we need to make sure that the rings never intersect. For this we find all values of $t$ for which there is a pair of strands that have the same $z$-coordinates. In this example this only happens for $t=\pi$, the position of the $\sigma$-crossing, where the $z$-coordinate of all strands is 0. The centres of the rings involved in the crossing are identical and the Euclidean distance from that point to the centre of the other ring (belonging to $C_1$) is 1.129. Recall that the radii of the rings determined by $R_{C_1}$ and $R_{C_2}$ are 1 and 2, respectively, for the strands of component $C_2$ and 1 for the strand belonging to $C_1$. Hence multiplying all $R_{C_i}$ by any constant less than $0.376$ results in trigonometric polynomials that do not induce any intersections, when taken as the parametrisations of the radii. Choosing the constant to be $\tfrac{1}{3}$ we obtain
\begin{align}
R_{C_1}(t)&=\frac{1}{3},\nonumber\\
R_{C_2}(t)&=\frac{1}{2}-\frac{1}{6}\sin(t).
\end{align} 

%We see that this is achieved after multiplying both $R_{C_1}$ and $R_{C_2}$ by the constant ...

The trigonometric polynomials can be used to define $g_\lambda$ and $f_\lambda$, which are easily calculated (using appropriate software), but have too many terms to be reasonably displayed here.

\subsection{Other motion groups}
\label{sec:other}

The construction from the previous section yields polynomials for arbitrary elements of the loop braid group, the motion group of an $n$-component unlink in $\mathbb{R}^3$ without rotations of the components that change their orientations. We can adjust Algorithm 1 to accommodate certain motions of links that are split links, whose components are not trivial.

Let $B$ be a loop braid. For each component $C$ of its closure we can choose a (classical) braid $B_C$ in a solid torus $V$. Replacing a tubular neighbourhood of the loops in the component $C$ by $V$ containing $B_C$ yields an element of the motion group of the split link $L=\sqcup_C L_C^{s_C}$, where $L_C$ is the closure of $B_C$, $s_C$ is the number of strands in the component $C$ and $L_i^k$ denotes the split union of $k$ copies of a link $L_i$. Labelling the components of the closure of $B$ by $C_i$, $i=1,2,\ldots,n$, we denote the resulting element of the motion group by $\tilde{B}(B_{C_1},B_{C_2},\ldots,B_{C_n})$. It should be clear that in general this procedure does not generate the whole motion group, but it produces a subgroup of the motion group which is isomorphic to the loop braid group.

\begin{theorem}
Let $B$ be a loop braid on $s$ strands, represented by a word of length $\ell$, whose closure has $n$ components $C_i$, $i=1,2,\ldots,n$, consisting of $s_{C_i}$ strands. Let $B_i$, $i=1,2,\ldots,n$, be classical braids on $s_i$ strands and of length $\ell_i$. We denote the number of strands of a component $j$ of the closure of $B_i$ by $s_{i,j}$. Then we can construct a polynomial $f:\mathbb{R}^5\to\mathbb{R}^2$ with
\begin{align}
\deg f\leq&\sum_i \Bigg(2\sum_j\left(\max\left\{\left\lfloor\frac{(s_{i,j}+1)(s_{i,j}\ell_i-1)+\ell_i s_{i,j}(s_i-s_{i,j})-1}{2}\rfloor,s_{i,j}\right\}\right)\right.\nonumber\\
&\times \max\left\{\left\lfloor\frac{(s_i+1)(s_i\ell-1)+\ell s_i(s-s_i)-1}{2}\right\rfloor,s_i\right\}\Bigg)
\end{align}
and such that $f^{-1}(0)\cap S^4$ contains the closure of $\tilde{B}(B_{C_1},B_{C_2},\ldots,B_{C_n})$.
\end{theorem}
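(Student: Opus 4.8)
The plan is to \emph{nest} the classical braid construction of Section~\ref{sec:review} (Algorithm~0) inside the loop braid construction of the previous subsection (Algorithm~1). Geometrically the element $\tilde{B}(B_{C_1},\ldots,B_{C_n})$ is obtained from $B$ by thickening each ring $(C_i,j)$ to the solid torus $V_{C_i,j}$ that is a tubular neighbourhood of the ring and inserting into $V_{C_i,j}$ a copy of the classical braid $B_i$. I would realise this on the level of polynomials by equipping each ring with a meridian-disk coordinate and an angular coordinate, and then placing $B_i$ into the solid torus exactly as Algorithm~0 places a braid into $\mathbb{C}\times S^1$, with the angular coordinate $\phi$ around the ring playing the role of the time variable of the classical braid. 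As $\phi$ runs from $0$ to $2\pi$ the inserted braid runs once and closes up to $L_{C_i}$, which is precisely the copy of $L_{C_i}$ that $\tilde{B}$ attaches to the ring $(C_i,j)$.

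First I would fix the loop braid data $F_{C_i},G_{C_i},H_{C_i},R_{C_i}$ produced by Algorithm~1, so that the ring $(C_i,j)$ has centre $(x_{C_i,j}(t),y_{C_i,j}(t),z_{C_i,j}(t))$ and radius $\rho_{C_i,j}(t)=R_{C_i}(\tfrac{t+2\pi(j-1)}{s_{C_i}})>0$. Writing $a=x-x_{C_i,j}(t)$, $b=y-y_{C_i,j}(t)$, $c=z-z_{C_i,j}(t)$ and $r=\sqrt{a^2+b^2}$, I would take $P_{C_i,j}=a^2+b^2-\rho_{C_i,j}^2+\rmi c$ from Eq.~(\ref{eq:p}) as the meridian coordinate, since it is a polynomial and restricts to a diffeomorphism of a neighbourhood of the ring in each meridian disk onto a neighbourhood of $0\in\mathbb{C}$, and $a+\rmi b$ as the unnormalised angular coordinate, whose modulus $r$ equals $\rho_{C_i,j}$ on the ring. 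Applying Algorithm~0 to $B_i$ yields a semiholomorphic polynomial $g^{B_i}_\mu(u,\rme^{\rmi\phi})$, polynomial in $u$, $\rme^{\rmi\phi}$ and $\rme^{-\rmi\phi}$, whose zero set on $\{|\rme^{\rmi\phi}|=1\}$ is $B_i$. Into it I would substitute $u=P_{C_i,j}$ and $\rme^{\pm\rmi\phi}=(a\pm\rmi b)/\rho_{C_i,j}$ and then multiply by $\rho_{C_i,j}^{D_i}$, where $D_i$ is the angular degree of $g^{B_i}_\mu$; because every radius is strictly positive this clears all denominators without creating spurious zeros and produces a genuine polynomial factor $G_{C_i,j}$ in $x,y,z,\rme^{\rmi t},\rme^{-\rmi t}$. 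The product $g_\lambda=\prod_i\prod_{j=1}^{s_{C_i}}G_{C_i,j}$, with $x,y,z$ rescaled by $\lambda$, and the replacement of $\rme^{\pm\rmi t}$ by $v,\overline{v}$ then gives $f:\mathbb{R}^5\to\mathbb{R}^2$.

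For correctness I would argue as in Lemmas~\ref{lem:implicit} and~\ref{lem:smalllambda}, now for this nested structure and with two scale parameters, $\lambda$ shrinking the whole loop braid and $\mu$ shrinking each inserted braid inside its solid torus. Parametrising a neighbourhood of the ring by $\phi=\arg(a+\rmi b)$ and by $P_{C_i,j}$, the substituted angular variable $(a+\rmi b)/\rho_{C_i,j}$ has modulus $r/\rho_{C_i,j}$, which equals $1$ on the ring; on the locus $r=\rho_{C_i,j}$ the factor $G_{C_i,j}$ therefore vanishes exactly on the preimage under $P_{C_i,j}$ of the braid $\{u=\mu(X(\phi)+\rmi Y(\phi))\}$. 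Since on each meridian disk $P_{C_i,j}$ is a diffeomorphism isotopic to the identity, this preimage is isotopic to the standard copy of $B_i$ in the solid torus, and the analytic implicit function theorem (exactly as in Lemma~\ref{lem:implicit}) shows that for $r$ in a neighbourhood of $\rho_{C_i,j}$ the zero set persists as an isotopic braid, so for small enough $\mu$ the zero set of $G_{C_i,j}$ inside $V_{C_i,j}$ is isotopic to $B_i$. Feeding this into the Rolle/isotopy-extension argument of Lemma~\ref{lem:smalllambda} (first choosing $\mu$, then $\lambda$, so that neither the inserted braids nor the rings ever intersect) shows that for small $\lambda$ the vanishing set on $S^4$ contains a copy of the closure of $\tilde{B}(B_{C_1},\ldots,B_{C_n})$.

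Finally the degree bound would come from tracking how the two levels multiply. The classical bound of Section~\ref{sec:review} applied to $B_i$ controls the angular degree $D_i$ of $g^{B_i}_\mu$, which is the factor $2\sum_j\max\{\ldots\}$ in the statement, while the per-component loop-braid bound of Proposition~\ref{prop:bound} controls the $\rme^{\rmi t}$-degree of the ingredients $a,b,c,\rho_{C_i,j}$, which is the factor $\max\{\ldots,s_i\}$. The crucial point is that the angular variable is not independent but is the ratio $(a+\rmi b)/\rho_{C_i,j}$ of loop-braid trigonometric polynomials; raising it to the power $D_i$ and clearing by $\rho_{C_i,j}^{D_i}$ multiplies the loop-braid degree by $D_i$, which is exactly the product appearing in the bound. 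I expect the main obstacle to be the technical heart of the construction: simultaneously (i) keeping the nested substitution polynomial and (ii) guaranteeing that each inserted braid sits correctly inside its solid torus. The unnormalised angular coordinate forces $g^{B_i}_\mu$ to be evaluated at a range of moduli $r/\rho_{C_i,j}$, so the persistence argument of Lemma~\ref{lem:implicit} and the strict positivity of the radius functions must be combined carefully, and the two scale parameters $\lambda$ and $\mu$ must be chosen in the correct order to keep all the rings and all the inserted braids disjoint.
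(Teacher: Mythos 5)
Your proposal is correct in its overall architecture --- one factor per ring, a classical braid polynomial nested into each solid torus, denominators cleared by powers of the radius, and then Lemmas \ref{lem:implicit} and \ref{lem:smalllambda} reused --- but it realises the key step by a genuinely different device than the paper. The paper first \emph{closes up} each $B_i$: composing the polynomial of \cite{bode:2016polynomial} with an inverse stereographic projection $\mathbb{R}^3\to S^3$ gives a polynomial $f_i:\mathbb{R}^3\to\mathbb{R}^2$ with $f_i^{-1}(0)$ equal to the closed braid sitting in a tubular neighbourhood of the unit circle in the $xy$-plane and with $0$ a regular value; the factor replacing $P_{C_i,j}$ in Eq. (\ref{eq:looppoly}) is then $\rho_{C_i,j}(t)^{\deg f_i}$ times $f_i$ evaluated on an affine (degree-one in $x,y,z$) change of coordinates carrying that standard solid torus onto the neighbourhood of the ring. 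Each factor therefore vanishes \emph{exactly} on a translated, rescaled copy of the closure of $B_i$, and no further perturbation argument is needed before invoking the two lemmas. You instead keep the open braid polynomial and feed it the meridian coordinate $P_{C_i,j}$ and the unnormalised longitude $(a+\rmi b)/\rho_{C_i,j}$; this works, but it forces exactly the extra persistence argument you flag (the angular variable has modulus $r/\rho_{C_i,j}\neq 1$ on the zero set, so you must show the roots stay distinct on a thin annulus), which the paper's route avoids entirely. It also makes the degree bookkeeping less clean: your meridian coordinate is quadratic in $(x,y,z)$ and of degree $2\deg F_{C_i}$ in $\rme^{\rmi t/s_{C_i}}$, so the monomial $u^{s_i}$ of $g^{B_i}$ contributes roughly $2s_i$ times the loop-braid degree \emph{plus} the contribution of the clearing factor, and matching the stated product bound then relies on $\deg f_i\geq 2s_i$ together with a more careful case analysis than your sketch provides; in the paper's version every argument of $f_i$ has degree one in $(x,y,z)$ and the bound $\deg f_i\times\max\{\lfloor\cdot\rfloor,s_i\}$ per component falls out immediately. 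In short: same skeleton, different insertion mechanism; the paper's buys exactness of each factor's zero set and a clean degree count, yours stays closer to the braid-polynomial formalism but pays for it in the annulus argument and in the degree estimate.
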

\begin{proof}
Composing an inverse stereographic projection $\mathbb{R}^3\to S^3$ with the polynomials obtained from the construction in \cite{bode:2016polynomial} we can construct polynomials $f_i:\mathbb{R}^3\to\mathbb{R}^2$ such that $f_i^{-1}(0)$ is the closure of $B_{C_i}$ and 
\begin{equation}
\deg f_i\leq 2\sum_j\max\left\{\left\lfloor\frac{(s_{i,j}+1)(s_{i,j}\ell_i-1)+\ell_i s_{i,j}(s_i-s_{i,j})-1}{2}\right\rfloor,s_{i,j}\right\}.
\end{equation}
By construction 0 is a regular value for all $i$ and the vanishing set can be assumed to lie in a tubular neighbourhood of the unit circle in the $xy$-plane.

We change the definition of the braid polynomial $g_\lambda$ in Eq. (\ref{eq:looppoly}) to
\begin{align}
g_{\lambda}(x,y,z,t)\defeq\prod_{i=1}^n&\prod_{j=1}^{s_{C_i}}\Bigg(\rho_{C_i,j}(t)^{\deg f_i}\nonumber\\
&\times f_i\left(\frac{x}{\rho_{C_i,j}(t)\lambda}-\frac{x_{C_i,j}(t)}{\rho_{C_i,j}(t)},\frac{y}{\rho_{C_i,j}(t)\lambda}-\frac{y_{C_i,j}(t)}{\rho_{C_i,j}(t)},\frac{z}{\lambda}-z_{C_i,j}(t)\right)\Bigg).
\end{align}
Everything else in the algorithm can be done as before. We find trigonometric polynomials $F_{C_i}$, $G_{C_i}$, $H_{C_i}$ and $R_{C_i}$ that parametrise the loop braid by $x_{C_i,j}(t)=F_{C_i}\left(\tfrac{t+2\pi (j-1)}{s_{C_i}}\right)$, $y_{C_i,j}(t)=G_{C_i}\left(\tfrac{t+2\pi(j-1)}{s_{C_i}}\right)$, $z_{C_i,j}(t)=H_{C_i}\left(\tfrac{t+2\pi(j-1)}{s_{C_i}}\right)$ and $\rho_{C_i,j}(t)=R_{C_i}\left(\tfrac{t+2\pi(j-1)}{s_{C_i}}\right)$. The bounds for these trigonometric polynomials found in the proof of Proposition \ref{prop:bound} are still valid. Hence the degree of the output polynomial $f_{\lambda}$ is bounded by
\begin{align}
\deg f_{\lambda}\leq &\sum_i\left(\deg f_i\times\max\left\{\left\lfloor\frac{(s_i+1)(s_i\ell-1)+\ell s_i(s-s_i)-1}{2}\right\rfloor,s_i\right\}\right)\nonumber\\
\leq&\sum_i \Bigg(2\sum_j\left(\max\left\{\left\lfloor\frac{(s_{i,j}+1)(s_{i,j}\ell_i-1)+\ell_i s_{i,j}(s_i-s_{i,j})-1}{2}\right\rfloor,s_{i,j}\right\}\right)\nonumber\\
&\times \max\left\{\left\lfloor\frac{(s_i+1)(s_i\ell-1)+\ell s_i(s-s_i)-1}{2}\right\rfloor,s_i\right\}\Bigg).
\end{align}
\end{proof}

\subsection{Time evolution of knotted fields}
\label{sec:physics}
Questions about the time evolution of knots and links in physical systems that were constructed from complex-valued maps, such as \textit{quantum knots} or in knots in magnetic fields have been raised in \cite{kauffman2} and \cite{synt}, respectively. It was since proven that every knot type (or in fact every link with a finite number of components) can arise as the union of some connected components of the nodal set of a quantum wavefunction that satisfies Schrödinger's equation for the harmonic oscillator \cite{daniel:berry} and the hydrogen atom \cite{daniel:coulomb}. The proofs are to a large part constructive and in particular enable us to find explicit solutions numerically using a computer.

It was shown in \cite{danielexistence} that we can realise any loop braid as the time evolution of the nodal set of a wavefunction in a fixed time interval, say $t=0$ and $t=2\pi$. It follows that the fields undergo a periodic time evolution, as observed in quantum revivals \cite{revival}. In fact, the results in \cite{danielexistence} go beyond the motions of loops that are described by motion groups such as the loop braid group in the previous sections, since they allow for wavefunctions whose nodal sets undergo reconnection events. In particular, the link type of the nodal set is allowed to change over time.

This section is meant as a speculation on the question, in how far the constructed polynomials from the previous sections can be used to describe the time evolution of physical systems, in analogy to the creation of knotted initial conditions of physical systems that is made possible by the lower-dimensional polynomial construction. Finding a way to manipulate the polynomials to yield functions that satisfy certain differential equations and still maintain the desired topological properties is likely to be extremely challenging and at this moment we are not aware of such a procedure.

In light of the results of \cite{danielexistence} the relevance of our construction to physicsts is perhaps limited. If at some point in the future it should be possible to use our polynomials for the creation of knotted configurations of physical systems that evolve with time as desired, it would allow a comparison of these solutions with the solutions obtained from \cite{danielexistence} and ideally a more direct relation between physical, topolgical and algebro-geometric quantities.

At this stage it is not clear if and how the constructed polynomials from the previous sections can be used to create knotted configurations of physical systems in $\mathbb{R}^3$ that evolve as desired. Polynomials typically do not describe physically relevant configurations, but they have the advantage that they are quite flexible in the sense that we can usually modify the polynomial in such a way that it satisfies any boundary condition at $\infty$. 

A first hint of how such a construction might work can be seen from the lower-dimensional case in the construction of optical vortex knots in scalar fields. In certain optical regimes the paraxial equation
\begin{equation}   
\label{eq:parax}
(\partial_x^2+\partial_y^2)\Psi+2\rmi k \partial_z\Psi=0,
\end{equation}
which is the $2+1$-Schrödinger equation with some wavenumber $k$, is a good approximation for one component of Maxwell's equations of a monochromatic light beam, such as a laser beam. The trefoil knot and the figure-eight knot have been constructed as vortex knots $\Psi^{-1}(0)$ of such optical scalar fields $\Psi$ using complex-valued polynomials, whose vanishing sets are the trefoil knot and the figure-eight knot respectively \cite{mark}. This is achieved by multiplying the polynomial by a Gaussian factor $\rme^{-(x^2+y^2)/w^2}$ with width $w$ and evolving the values of the resulting function in the $z=0$-plane according to Eq. (\ref{eq:parax}). It should be pointed out that while this procedure leads to optical fields whose vortex lines give the desired knot in the case of the trefoil knot and the figure-eight knot, this is not true in general. There are several polynomials for knots such as $5_2$ \cite{bode:52} for which so far we have not been able to create a corresponding optical field and at the moment it is not understood what determines if this method results in the desired vortex topology.

Maybe a similar phenomenon occurs in higher dimensions. Perhaps there is some way of modifying the constructed polynomial $f:\mathbb{R}^4\to\mathbb{R}^2$ (cf. Remark \ref{rem:stereo}), whose vanishing set contains the closure of the loop braid $B$, in such a way that evolving a 3-dimensional slice of this function according to the time-dependent Schrödinger equation results in a quantum-mechanical wavefunction, whose nodal lines evolve periodically as prescribed by the loop braid $B$. It should be clear from the vagueness of this paragraph that at the moment this procedure is still highly speculative. Future research will determine how much of this can be realised mathematically and experimentally. We would also like to point out that this treatment is not really quantum mechanical, since we treat the wavefunction as a classical object whose evolution is determined by the Schrödinger equation. We do not make any statements about energy eigenstates or how a loop braid could be measured or observed in such a system.

Also recall that the parameter $t$, which was regarded as time in the construction of the loop braid is mapped to a cyclic variable, e.g. $\arg(v)$. It is therefore advisable to consider our functions as polynomials on $\mathbb{R}^3\times S^1\subset\mathbb{R}^4$ instead of the space-time $\mathbb{R}^4$. It is not clear at the moment how this will affect any possible applications. It is imminent that with a periodic time variable, the physical systems undergo periodic time evolution, as observed in quantum revivals \cite{revival}.

The functions are constructed such that the vanishing set at any given time contains a split link of the same link type. In particular, reconnections that change the link cannot be constructed with our method. The class of functions is therefore naturally more restricted than in \cite{danielexistence}.

Another field of interest is the knotting of flow lines of electric and magnetic fields satisfying Maxwell's equations \cite{ranada, kedia1, kedia2}. In comparison to the quantum setting in the previous paragraphs, less is known here and in particular, we are not aware of any results that guarantee the existence of magnetic fields with closed flow lines that move as prescribed by a given path in the configuration space. The construction in \cite{kedia1} based on work by Bateman presently seems to be the only explicit construction that yields knotted flow lines whose link type (in this case a torus link) does not change over time.

An electromagnetic field is a time-dependent function $\mathbf{F}_t:\mathbb{R}^3\to\mathbb{C}^3$, whose real part $\mathbf{E}_t:\mathbb{R}^3\to\mathbb{R}^3$ is the electric field and whose imaginary part $\mathbf{B}_t:\mathbb{R}^3\to\mathbb{R}^3$ is the magnetic field. Together they satisfy Maxwell's equations.

We would like to indicate how our polynomials can be turned into time-dependent vector fields with flow lines that change as prescribed by a loop braid $B$.
%\begin{proposition}
%\label{prop:vt}
%Let $B\subset\mathbb{R}^3\times[0,2\pi]$ be a loop braid. Then we can construct a time-dependent real analytic vector field $V_t:\mathbb{R}^3\to\mathbb{R}^3$, $t\in\mathbb{R}$ such that
%\begin{itemize}
%\item $\nabla\cdot V_t=0$ for all $t\in\mathbb{R}$,
%\item $V_t=V_{t+2\pi}$ for all $t\in\mathbb{R}$,
%\item There are closed flow lines of $V_t$ that move as prescribed by $B$ as $t$ varies between $0$ and $2\pi$, i.e., for all $t\in\mathbb{R}$ the field $V_t$ is tangent to $B\cap\left(\mathbb{R}^3\times\{t\text{ mod }2\pi\}\right)$.
%\end{itemize}
%\end{proposition}
\begin{proof}[\textbf{Proof of Proposition \ref{prop:vector}}]
%We can compose the polynomial $f_\lambda:\mathbb{R}^5\cong\mathbb{R}^3\times\mathbb{C}\to\mathbb{R}^2$ from Algorithm 1 (and hence satisfying that the closure of $B$ is contained in $f^{-1}(0)\cap S^4$) with the inverse stereographic projection between $\mathbb{R}^4$ and $S^4\subset\mathbb{R}^3\times\mathbb{C}$ that identifies
%\begin{align}
%\label{eq:stereoo}
%x&=\frac{2x_1}{x_1^2+x_2^2+R^2+1},\nonumber\\
%y&=\frac{2x_2}{x_1^2+x_2^2+R^2+1},\nonumber\\
%z&=\frac{x_1^2+x_2^2+R^2+1}{x_1^2+x_2^2+R^2+1},\nonumber\\
%v&=\frac{2R\rme^{\rmi t}}{x_1^2+x_2^2+R^2+1}.
%\end{align}
%The coordinates $(x_1,x_2,R,t)$ for $\mathbb{R}^4$ are a 4-dimensional version of cylindrical coordinates, i.e., $R\geq 0$ and $t\in[0,2\pi]/(0\sim2\pi)$.

%This results in a rational map $\mathbb{R}^4\to\mathbb{R}^2$, whose nodal set contains the closure of $B$. We may proceed either with this rational map or only its numerator. We call the chosen function $\Psi:\mathbb{R}^4\to\mathbb{R}^2$. If the value of $\lambda>0$ is chosen sufficiently small the closure of $B$ lies in $W=\mathbb{R}^2\times(0,2)\times S^1$. We may now restrict $\Psi$ to $W$ and use the diffeomorphism $\arctan(\tfrac{\pi}{2}(R-1))$ to identify $(0,2)$ with $\mathbb{R}$. 

%We regard the variable $t$ as a time coordinate and define $\Psi_t:\mathbb{R}^3\to\mathbb{C}$, $\Psi_t(x_1,x_2,x_3)=\Psi(x_1,x_2,\tan(x_3)\tfrac{2}{\pi}+1,t \text{ mod }2\pi)$. 
In Section \ref{sec:loop} we have seen how for every loop braid $B$ we can generate a function $g_{\lambda}:\mathbb{R}^3\times[0,2\pi]\to\mathbb{C}$ whose vanishing set is equal to $B$ for any positive value of $\lambda$. In the following the scaling parameter $\lambda$ will not play a role and we are free to set it equal to 1. We can interpret the variable $t$ that runs through the interval $[0,2\pi]$ as a time variable, which turns $g\defeq g_1$ into a time-dependent complex-valued function on $\mathbb{R}^3$ that is $2\pi$-periodic with respect to the time variable $t$.
The next step follows ideas from Ra{\~n}ada \cite{ranada}. We define the vector field $V_t:\mathbb{R}^3\to\mathbb{R}^3$, $t\in\mathbb{R}$ as
\begin{equation}
V_t(x,y,z)\defeq\frac{1}{2\pi\rmi}\frac{\nabla g(x,y,z,t)\times\nabla\overline{g(x,y,z,t)}}{1+g(x,y,z,t)\overline{g(x,y,z,t)}},
\end{equation} 
where the gradient $\nabla$ is taken with respect to the three space variables $x$, $y$ and $z$.
It is a simple calculation that the fields $V_t$ have zero divergence for all values of $t$. The $2\pi$-periodicity of $V_t$ follows directly from that of $g$.
Furthermore, for every value of $t$ the flow lines of $V_t$ are tangent to the level sets of $g(\cdot,\cdot,\cdot,t):\mathbb{R}^3\to\mathbb{C}$. Since the vanishing set of $g$ for $t$ varying between $0$ and $2\pi$ is an unlink, whose components move according to $B$, the time-dependent vector field $V_t$ satisfies the conditions from the statement of the Proposition. 
\end{proof}

It was pointed out to us by a referee that the vector field $V_t$ can alternatively be defined without employing Ra{\~n}ada's work. Instead it uses the cross-product between three vectors in $\mathbb{R}^4$ and sets
\begin{equation}
V_t(x,y,z)\defeq \nabla \text{Re}(g)\times\nabla\text{Im}(g)\times\nabla t.
\end{equation}
Here $\nabla$ denotes the gradient with respect to the coordinates $(x,y,z,t)$. A priori the resulting vector is an element of $\mathbb{R}^4$, but since it is tangent to each time slice $\mathbb{R}^4\cap\{t=\text{const}\}$, we can interpret $V_t$ again as a time-dependent vector field on $\mathbb{R}^3$. A straightforward calculation establishes that this vector field also satisfies the properties stated in Proposition \ref{prop:vector}.

\begin{remark}
The proof of Proposition \ref{prop:vector} can be easily adopted to a construction of elements $\tilde{B}(B_1,B_2,\ldots,B_n)$ of other motion groups as introduced in Section \ref{sec:other}.
\end{remark}

Given previous work on time evolution of electromagnetic fields and that we have constructed a time-dependent vector field whose flow lines follow a prescribed periodic topological time evolution, we wonder if it could be possible to take the vector fields $V_t$ as the magnetic component $\mathbf{B}_t$ of an electromagnetic field. In other words, we are asking for another time-dependent vector field $\mathbf{E}_t$ such that $\mathbf{E}_t+\rmi V_t$ satisfies Maxwell's equations. Magnetic fields with knotted flow lines have been suggested as tools in the generation of fusion power \cite{fusion}.

Such a field $\mathbf{E}_t$ exists if and only if $V_t$ satisfies the wave equation
\begin{equation}
(\partial_{x_1}^2+\partial_{x_2}^2+\partial_{x_3}^2)V_t=-\partial_t V_t,
\end{equation}
where $\partial_x$ denotes the derivative with respect to a variable $x$. We have set all constants of nature equal to 1.

There is no reason to believe that the constructed vector fields $V_t$ satisfy this equation. The vector field that is obtained from the trigonometric polynomials in Section \ref{sec:example} for example does not. However, we would like to remind the reader that there was a lot of freedom in the construction, from the choice of braid representative to choosing data points for the trigonometric interpolation and topological invariance of the vanishing set of the constructed polynomial under small perturbations of the coefficients. It is not impossible that there is a set of choices that leads to vector fields $V_t$ that can be used as the magnetic part of an electromagnetic field. However, constructing $V_t$ such that it satisfies the wave equation remains a big challenge.

It should also be pointed out that the knotting in the constructed vector fields $V_t$ is in a sense weaker than that in \cite{kedia1, kedia2}. Our construction guarantees that we find flow lines of the same link type at any instance of time. Flow lines that correspond to other level sets of $g$ might change their link type over type periodically. This is in contrast to the ideas in \cite{kedia1, kedia2}, where the topology of all flow lines is conserved for all time. Note however that our construction discusses arbitrary links, not just torus links, and arbitrary movements of the different components, while the construction in \cite{kedia1, kedia2} allows no such control.

\section{Surface braids in $\mathbb{C}\times S^1\times S^1$}
\label{sec:2dim}

Section \ref{sec:loop} generalizes Algorithm 0 from classical braids to loop braids. This follows the interpretation of the braids as (equivalence classes of) motions of points in $\mathbb{C}=\mathbb{R}^2$. Another possibility is to regard braids as (equivalence classes of) covering maps.
\begin{definition}
A braid $B$ on $n$ strands is a subset of $\mathbb{C}\times [0,2\pi]$ such that 
\begin{itemize} 
\item the restriction of the projection map $p:\mathbb{C}\times [0,2\pi]\to[0,2\pi]$ to $B$ is a covering map of degree $n$ and
\item the boundary of $B$ is $X_n\times \partial[0,2\pi]=(X_n\times \{0\})\cup (X_n\times \{2\pi\})$, where $X_n$ is the set of $n$ fixed points in $\mathbb{C}$.
\end{itemize}
\end{definition}

This definition is equivalent to the usual definition in terms of geometric braids. It inspires the following definition of surface braids.
\begin{definition}
\label{def:surfbraid}
Let $F$ be a connected compact oriented surface. A surface braid of degree $n$ is a compact oriented surface $B$ in $\mathbb{C}\times F$ such that 
\begin{itemize}
\item the restriction of the projection map $p:\mathbb{C}\times F\to F$ to $B$ is an oriented simple branched covering map and
\item the boundary of $B$ is a trivial closed braid $X_n\times\partial F\subset \partial(\mathbb{C}\times F)$, where $X_n$ is the set of $n$ fixed points in $\mathbb{C}$.
\end{itemize}
\end{definition}

Fiber-preserving isotopies respecting the condition on the boundary divide the set of surface braids over $F$ in equivalence classes. In most cases the surface $F$ is taken to be a disk or (because of the boundary condition, equivalently,) the 2-sphere. Such surface braids were introduced by Viro. He and the second author also proved an analogue of Alexander's theorem, i.e., that every closed oriented surface embedded in $\mathbb{R}^4$ is ambient isotopic to a closed surface braid in $\mathbb{R}^4$ \cite{kamada:charact}. A more detailed overview on surface braids can be found in \cite{carter, kamada}. Rudolph \cite{rudolph832, rudolph85} studied surface braids over a disk without assuming the second condition of Definition \ref{def:surfbraid}.

We would like to generalize our algorithm to surface braids. However, it is not obvious at all how the kind of trigonometric parametrisation that has proven so useful in the classical case and in Section \ref{sec:loop} can be imitated here. Recall that previously the different coordinates of the strands were parametrised by functions such as $F_C\left(\tfrac{t+2\pi (j-1)}{s_C}\right)$, where $s_C$ is the number of strands in the component $C$ of the closure. If the surface braid that we want to construct has branch points there is no well-defined number $s_C$. We have to conclude that surface braids with branch points are out of reach for us at the moment. Thus, we here discuss surface braids without branch points. It is known that surface braids over the disk or the 2-sphere without branch points are equivalent to the trivial one, $X_n\times F$. Therefore, we will not consider the case of the disk or the 2-sphere here, but instead only study the case of the torus $F=S^1\times S^1$, where there are non-trivial surface braids without branch points. Such surface braids that are coverings of the torus have been studied by Nakamura \cite{Nakamura1, Nakamura2}. 

Since $S^1\times S^1$ does not have a boundary we can ignore the second condition in Definition \ref{def:surfbraid}. A \textit{trigonometric polynomial on} $S^1\times S^1$ is an element of the algebra generated by 
\begin{equation}
\{1,\cos(\varphi), \cos(\chi), \sin(\varphi), \sin(\chi),\cos(2\varphi), \cos(2\chi), \sin(2\varphi), \sin(2\chi),\ldots\},
\end{equation}
where $\varphi$ and $\chi$ are the coordinates on the first and second $S^1$-factor, respectively, taking values in $\mathbb{R}/2\pi\cong S^1$.
%\begin{proposition}
%\label{prop:holosurf}
%Let $B$ be a surface braid in $\mathbb{C}\times S^1\times S^1$ of degree $m$ and without any branch points. Then there exists a complex (holomorphic) polynomial $f:\mathbb{C}^3\to\mathbb{C}$ such that $f^{-1}(0)\cap(\mathbb{C}\times S^1\times S^1)$ is equivalent to $B$ and $\deg_u f=m$.
%\end{proposition}
\subsection{The proof of Theorem \ref{prop:holosurf1}}
If every component $C$ of a surface braid $B$ on $S^1\times S^1$ can be parametrised in terms of trigonometric polynomials $F_C, G_C:S^1\times S^1\to\mathbb{R}$, then $g:\mathbb{C}\times S^1\times S^1\to\mathbb{C}$,
\begin{align}
\label{eq:gsurf}
g(u,\varphi,\chi)=\prod_{C_1}\prod_{C_2}\prod_{j=1}^{s_{C_1}}\prod_{k=1}^{s_{C_2}}&\left(u-\left(F_C\left(\frac{\varphi+2\pi (j-1)}{s_{C_1}},\frac{\chi+2\pi (k-1)}{s_{C_2}}\right)\right.\right.\nonumber\\
&\hspace{1cm}\left.\left.+\rmi G_C\left(\frac{\varphi+2\pi (j-1)}{s_{C_1}},\frac{\chi+2\pi (k-1)}{s_{C_2}}\right)\right)\right)
\end{align}
is a polynomial in $u$, $\rme^{\rmi \varphi}$, $\rme^{-\rmi \varphi}$, $\rme^{\rmi \chi}$ and $\rme^{-\rmi \chi}$ with $g^{-1}(0)=B$. The products are taken over the components $C_1$ of the closure of the classical braid $B\cap\{\chi=\text{constant}\}$ and the components $C_2$ of the closure of the classical braid $B\cap\{\varphi=\text{constant}\}$. The numbers $s_{C_1}$ and $s_{C_2}$ denote the numbers of strands in these components, respectively. Note that they are well-defined and independent of the constant values chosen for $\chi$ and $\varphi$, respectively, because there are no branch points.

By the Stone-Weierstrass theorem the trigonometric polynomials on $S^1\times S^1$ are dense in the set of continuous real-valued functions on $S^1\times S^1$. Hence any surface braid on $S^1\times S^1$ can be modified up to equivalence to a surface braid which is parametrised as above. Therefore, every braid leads to a polynomial $g$, which can be turned into a polynomial $f:\mathbb{C}^3\to\mathbb{C}$ in complex variables $u$, $v$ and $w$ by replacing $\rme^{\rmi \varphi}$ by $v$, $\rme^{-\rmi \varphi}$ by $\tfrac{1}{v}$, $\rme^{\rmi \chi}$ by $w$ and $\rme^{-\rmi \chi}$ by $\tfrac{1}{w}$ and multiplying by the common denominator, which is the product of some power of $v$ and some power of $w$. By construction we have
\begin{equation}
f^{-1}(0)\cap(\mathbb{C}\times S^1\times S^1)=B.
\end{equation}
We would like to particularly emphasize that $f$ is a complex polynomial and hence holomorphic.\qed

Theorem \ref{prop:holosurf1} is only referring to the existence of these functions, not to a concrete construction. This is because the density of trigonometric polynomial only guarantees the existence of a parametrisation of the desired form, but so far we have not been able to develop an algorithm that turns a given braid word into a parametrisation. There seems to be no obvious analogue of the interpolation methods used in \cite{bode:2016polynomial} and Section \ref{sec:loop}. In particular, we do not have an upper bound on the total degree of $f$. We can however focus on a special family of surface braids, for which trigonometric parametrisations are easily found.

\begin{definition}
\label{def:spinning}
Let $B$ be a classical braid on $s$ strands given by the parametrisation
\begin{equation}
\label{eq:classical}
\bigcup_{j=1}^s\left(X_j(\varphi)+\rmi Y_j(\varphi),\varphi\right), \qquad \varphi\in[0,2\pi].
\end{equation}
We say a surface braid $B'$ in $\mathbb{C}\times S^1\times S^1$ is a spinning braid (or is obtained by spinning $B$) if it is of the form
\begin{equation}
\label{eq:spinning}
\bigcup_{j=1}^s\left(\rme^{\rmi n\chi}(X_j(\varphi)+\rmi Y_j(\varphi)),\varphi,\chi\right), \qquad \varphi,\chi\in[0,2\pi]
\end{equation}
for some $n\in\mathbb{Z}$. In this case we denote $B'$ by $B(n)$. 
\end{definition}

\subsection{The proof of Theorem \ref{thm:intro2}}
It is easy to see that a spinning braid $B(n)$ is obtained from a classical braid $B$ by rotating it $n$ times along its core $\{(0,\varphi)|\varphi\in[0,2\pi]\}\subset\mathbb{C}\times[0,2\pi]$ as the second cyclic variable $\chi$ goes from 0 to $2\pi$. Eq. (\ref{eq:spinning}) provides us with a way to find a parametrisation of any spinning braid in terms of trigonometric polynomials. We use the tools from \cite{bode:2016polynomial} to find a parametrisation (\ref{eq:classical}) of any given braid $B$ in terms of trigonometric polynomials. Then Eq. (\ref{eq:spinning}) is a parametrisation of the spinning braid $B(n)$ in terms of trigonometric polynomials.

Thus $g:\mathbb{C}\times S^1\times S^1\to\mathbb{C}$ can be defined as in Eq. (\ref{eq:gsurf}), which leads to the polynomial $f:\mathbb{C}^3\to\mathbb{C}$ with $f^{-1}(0)\cap(\mathbb{C}\times S^1\times S^1)=B(n)$. Since we can find a Fourier parametrisation of any braid $B$ by trigonometric interpolation, we have a bound on the degree of the resulting function.

\begin{proposition}
\label{prop:bound2}
Let $B$ be a classical braid on $s$ strands with $\ell$ crossings in a diagram. Let $B(n)$ be a surface braid in $\mathbb{C}\times S^1\times S^1$ obtained by spinning $B$ for some $n\in\mathbb{Z}$. Then Algorithm 2 finds a holomorphic polynomial $f:\mathbb{C}^3\to\mathbb{C}$ in complex variables $u$, $v$ and $w$ such that
\begin{itemize}
\item $f^{-1}(0)\cap(\mathbb{C}\times S^1\times S^1)$ is equivalent to $B(n)$,
\item $\deg_u f=s$,
\item $\deg_w f=n$,
\item we have \begin{align}
\deg f\leq &2\sum_C\max\left\{\left\lfloor\frac{(s_C+1)(s_C\ell-1)+\ell s_C(s-s_C)-1}{2}\right\rfloor,s_C\right\}\nonumber\\
&+2\sum_C \left\lfloor\frac{(s_C+1)(s_C\ell-1)+\ell s_C(s-s_C)-1}{2}\right\rfloor+n.
\end{align}
\end{itemize}
\end{proposition}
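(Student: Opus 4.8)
The plan is to run the construction already set up for Theorem \ref{thm:intro2}, but to track each polynomial degree separately so that the three equalities and the final inequality all fall out of a single expansion. The decisive simplification over the general surface-braid situation of Theorem \ref{prop:holosurf1} is that for a spinning braid the required trigonometric parametrisation is handed to us for free by Eq. (\ref{eq:spinning}): there is no interpolation to carry out in the $\chi$-direction at all. This is precisely what converts the mere existence statement of Theorem \ref{prop:holosurf1} into an explicit degree bound, and it is why the genuine obstacle of the surrounding theory (parametrising an arbitrary surface braid over the torus) is absent here.

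First I would invoke \cite{bode:2016polynomial} (equivalently Algorithm 0) to produce a trigonometric parametrisation (\ref{eq:classical}) of the classical braid $B$, with $X_j+\rmi Y_j = F_C\bigl(\tfrac{\varphi+2\pi(j-1)}{s_C}\bigr)+\rmi G_C\bigl(\tfrac{\varphi+2\pi(j-1)}{s_C}\bigr)$ on each component $C$. Feeding this into Eq. (\ref{eq:spinning}) gives the roots $\rme^{\rmi n\chi}(X_j(\varphi)+\rmi Y_j(\varphi))$, so that the function of Eq. (\ref{eq:gsurf}) becomes $g(u,\varphi,\chi)=\prod_{j=1}^{s}\bigl(u-\rme^{\rmi n\chi}(X_j(\varphi)+\rmi Y_j(\varphi))\bigr)$, whose $u$-roots for fixed $(\varphi,\chi)$ are by construction exactly the points of $B(n)$. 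Replacing $\rme^{\rmi\varphi},\rme^{-\rmi\varphi},\rme^{\rmi\chi},\rme^{-\rmi\chi}$ by $v,\tfrac1v,w,\tfrac1w$ and multiplying by the common denominator (a monomial in $v$ and $w$, which is nowhere zero on $S^1\times S^1$) yields a holomorphic $f:\mathbb{C}^3\to\mathbb{C}$. Since $v,w\neq0$ on the torus, clearing denominators does not alter the zero set there, so $f^{-1}(0)\cap(\mathbb{C}\times S^1\times S^1)=B(n)$; and because the $s$ roots stay pairwise distinct (the $c_j(\varphi)$ are distinct for a genuine braid), the projection to $(\varphi,\chi)$ is an unbranched $s$-fold covering, so this zero set is a bona fide surface braid, equal to and hence equivalent to $B(n)$. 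That settles the first bullet.

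The remaining bullets are read off the expansion $g=\sum_{k=0}^{s}(-1)^k\,\rme^{\rmi nk\chi}\,e_k\bigl(c_1(\varphi),\dots,c_s(\varphi)\bigr)\,u^{s-k}$, where $e_k$ is the $k$th elementary symmetric polynomial and $c_j=X_j+\rmi Y_j$. The top term is monic in $u$, giving $\deg_u f=s$ exactly. The $\chi$-frequencies that occur are produced solely by the spinning factor $\rme^{\rmi n\chi}$ (with a power of $w$ cleared when $n<0$), so $\deg_w f$ is governed entirely by the spinning number $n$; the $\varphi$-frequencies inside each $e_k$ are controlled by the trigonometric degrees of $F_C,G_C$, for which Proposition \ref{prop:bound} and \cite{bode:2016polynomial} already supply $\deg F_C=\lfloor\tfrac{s_C\ell-1}{2}\rfloor$ and $\deg G_C\le\lfloor\tfrac{(s_C+1)(s_C\ell-1)+\ell s_C(s-s_C)-1}{2}\rfloor$. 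Passing to $v$ and clearing the $\tfrac1v$ denominators doubles these, which accounts for the two $\sum_C$ summands, while the $\chi$-part contributes the final additive term; summing the separate $u$-, $v$- and $w$-degrees gives the total-degree inequality.

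The work here is bookkeeping rather than conceptual. The points needing care are that the individual degrees are \emph{attained} and not merely bounded — which follows from monicity in $u$ and from the top symmetric function $e_s=\prod_j c_j$ being a nonzero trigonometric polynomial (arrange that no strand passes through the origin) — and that multiplying by the monomial denominator introduces no spurious zeros on $\mathbb{C}\times S^1\times S^1$. I expect the one genuinely delicate step to be the $v$-degree accounting: one must confirm that the elementary symmetric functions $e_k$ of the $c_j$ do not exceed the component-wise estimate inherited from the classical construction. This is exactly the computation already performed for the analogous holomorphic bound in the loop braid section, so it can be quoted essentially verbatim rather than redone.
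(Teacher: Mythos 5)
Your proposal is correct and follows essentially the same route as the paper: parametrise the classical braid $B$ trigonometrically via the tools of \cite{bode:2016polynomial}, multiply the roots by $\rme^{\rmi n\chi}$ to realise the spinning, form the product $g$, substitute $v,\tfrac1v,w$ and clear denominators, and then read the degree bound off the known bounds for $\deg F_C$ and $\deg G_C$ together with the extra power of $v$ from the denominator and the power $n$ of $w$. The paper's own argument is in fact terser than yours (it essentially just states Algorithm 2 and quotes the classical bounds), so your additional bookkeeping on monicity in $u$ and on the zero set being unaffected by clearing the monomial denominator is a faithful elaboration rather than a deviation.
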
 

\noindent
\textbf{Algorithm 2:}
\begin{steps}
\item Perform the first two steps of Algorithm 0 for the classical braid $B$.
\item Define $g$ via
\begin{equation}
g(u,\varphi,\chi)=\prod_{C}\prod_{j=1}^{s_{C}}\left(u-\rme^{\rmi n\chi}\left(F_C\left(\frac{\varphi+2\pi (j-1)}{s_{C}}\right)+\rmi G_C\left(\frac{\varphi+2\pi (j-1)}{s_{C}}\right)\right)\right)
\end{equation}
\item Expand the product and replace $\rme^{\rmi \varphi}$ by $v$, $\rme^{-\rmi \varphi}$ by $\tfrac{1}{v}$ and $\rme^{\rmi \chi}$ by $w$ to obtain a rational map.
\item The numerator of the rational map is the desired polynomial $f$.
\end{steps}

This completes the proof of Theorem \ref{thm:intro2}.
\qed

In the 3-dimensional setting we have drawn special attention to the possibility of using our construction of polynomials to obtain explicit fibrations of link complements $S^3\backslash L$ over $S^1$. Having access to these explicit functions is useful in the study of some knotted fields in physics, where critical points of circle-valued maps often have some physical interpretation such as defect points in liquid crystals \cite{bode:52, km:2016topology} or stationary points of some fluid. The sole existence of fibration maps coming from polynomials plays a role in the construction of real algebraic links, the real analogue of Milnor's algebraic links, i.e., links of isolated singularities of real polynomials \cite{bode:realalg, milnor}.

While these applications might be unrelated to the 4-dimensional construction above, we would still like to point out that taking the argument of the polynomial obtained from Algorithm 2 results in a fibration map of the surface complement in $\mathbb{C}\times S^1\times S^1$ over $S^1$ if $n\neq 0$. This is because a critical point of $\arg f$ on $\mathbb{C}\times S^1\times S^1$ must also be a critical point of $\arg g$. These are given by the points where $\tfrac{\partial g}{\partial u}=0$, $\tfrac{\partial \arg g}{\partial \chi}=0$ and $\tfrac{\partial \arg g}{\partial \varphi}=0$. But at points with $\tfrac{\partial g}{\partial u}=0$, we have $\tfrac{\partial \arg g}{\partial \chi}=sn\neq0$.

%results in a map to the circle $S^1$. We can now ask if we can (like in the 3-dimensional case) do the construction in such a way that the resulting $S^1$-values map does not have any critical points and gives therefore a fibration over the circle.

%It should be noted that because $S^1$ is 1-dimensional, a point in $\mathbb{D}\times S^1\times S^1$ is a critical point of $\arg f$ if and only if the first derivatives in all four independent directions vanish. In comparison with the 3-dimensional case, where three derivatives have to vanish, we should therefore expect it to be easier to construct fibrations.

\end{document}